\crefname{hypothesis}{Hypothesis}{Hypotheses}
\title{Automatic exploration techniques for the numerical bifurcation study of the Ginzburg-Landau equation\thanks{Submitted to SIAM Journal on Applied Dynamical Systems on 6 march 2019.
		\funding{This work was funded by the University of Antwerpen through the GOA project "Emergent Phenomena in Multicomponent Quantum Condensates".}}}
\author{Michiel Wouters\thanks{Applied Mathematics Research Group, Department of Mathematics and Computer Science, University of Antwerpen, Middelheimlaan 1, 2020 Antwerp, Belgium
		(\email{michiel.wouters2@uantwerpen.be},\email{wim.vanroose@uantwerpen.be}).}
	\and Wim Vanroose\footnotemark[2]}
\begin{document}

%

\maketitle

\begin{abstract}
This paper considers the extreme type-II Ginzburg-Landau equations, a nonlinear PDE model that describes the states of a wide range of superconductors. For two-dimensional grids, a robust method is developed that performs a numerical continuation of the equations, automatically exploring the whole solution landscape. The strength of the applied magnetic field is used as the bifurcation parameter. Our branch switching algorithm is based on Lyapunov-Schmidt reduction, but we will show that for an important class of grids an alternative method based on the equivariant branching lemma can be applied as well. The complete algorithm has been implemented in Python and tested for multiple examples. For each example a complete solution landscape was constructed, showing the robustness of the algorithm.
\end{abstract}

\begin{keywords}
Superconductors, Ginzburg-Landau system, numerical continuation, automatic exploration, Lyapunov-Schmidt reduction, equivariant branching lemma
\end{keywords}

\begin{AMS}
  37M20, 37N20
\end{AMS}

\section{Introduction}
Superconductors are materials that, below a certain
characteristic temperature ($T_c$), exhibit a complete loss of
electrical resistivity \cite{Du1992} and, as a result, can generate
currents that expel applied magnetic fields.

In certain types, such as bulk type II superconductors, the material
is not homogenously superconducting but there are vortices through which
magnetic fields can penetrate  the material. These vortices organise
themselves in regular patterns known as an Abrikosov lattice
\cite{Abrikosov1957}.

For small nano devices the emerging patterns of vortices depend in an
intricate way on the system parameters and the geometry of the sample.
Scientists and engineers are designing devices that have an improved
critical field and temperatures by exploiting geometrical properties
and engineering the material parameters \cite{Cordoba2013}.  It is
important to understand the effect of the geometry on the vortex
dynamics \cite{Embon2017}. The dynamics of the superconducting
materials is modelled by the Ginzburg-Landau equation, a non-linear
Schr\"odinger equation, and the emerging patterns are steady states of
this equation. Transitions between different patterns are marked by
bifurcation points.

In this paper we aim to develop a numerical tool that  generates
automatically \textit{all} connected patterns that appear in a given
superconductor geometry and indentifies \textit{all} the bifurcation
points that define the transitions between patterns as parameters of
the system change.  There is wide interest from material scientists
and device engineers to understand what parameters are determining the
stability of superconducting states.  In particular, the aim is to
understand the dynamics of the transitions between these patterns and
what needs to be changed in the geometry or parameter settings to
prevent the system from making a transition that destroys a given pattern.

The challenge is that sparse linear algebra is required to solve the
large systems that describe the Ginzburg-Landau equation.  Existing
tools such as Auto \cite{Auto} and Matcont \cite{MatCont} can automatically
generate bifurcation diagrams for small systems of coupled ordinary
differential equations.  However, these tools are based on dense
linear algebra and they cannot scale to the large sparse systems that
appear in the description of superconductors.  In this paper we
develop the required tools that can solve the large systems of
equations solely based on sparse linear algebra.

\subsection*{Describing the state of a superconductor}
The state of superconductors is described by the Ginzburg-Landau
partial differential equation. We will consider samples of
superconducting material that occupy an open, bounded region $\Omega$
of the two-dimensional Euclidean space, subject to an external
magnetic field $\mathbf{H}_0$ (see Figure \ref{fig - states}).

\begin{figure}
	\centering
	\begin{subfigure}[h]{0.35\textwidth}
		\includegraphics[trim = 0mm 0mm 0mm 0mm,clip,scale=0.5]{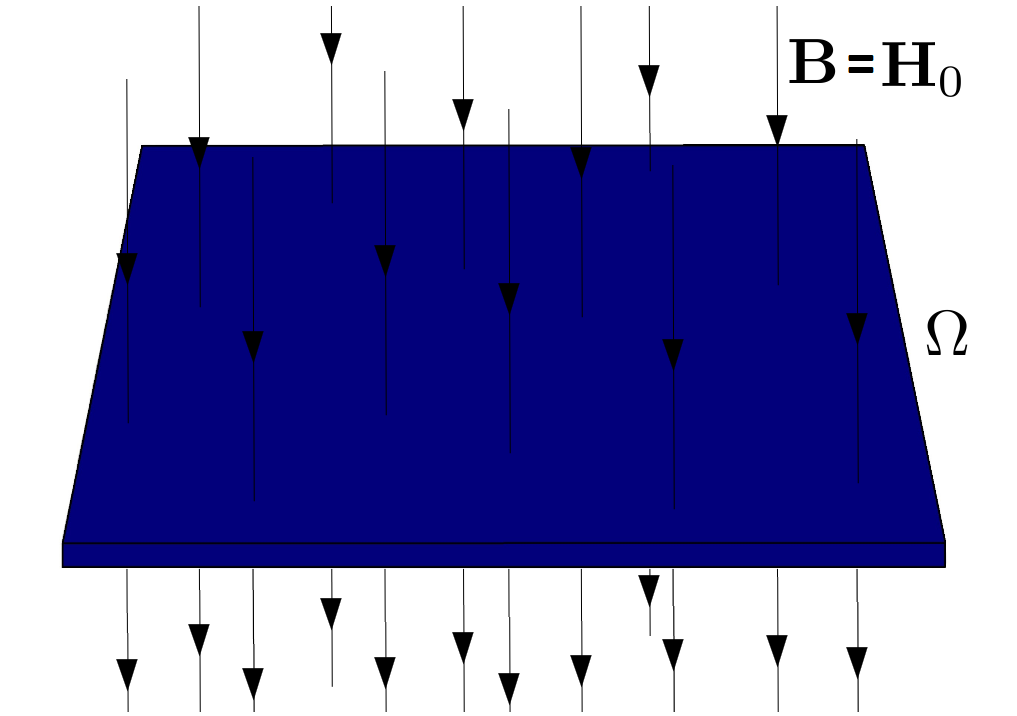}
		\caption{The normal state}
		\label{fig - normal state}
	\end{subfigure}
	\hfil
	\begin{subfigure}[h]{0.38\textwidth}
		\includegraphics[trim = 0mm 0mm 0mm 0mm,clip,scale=0.5]{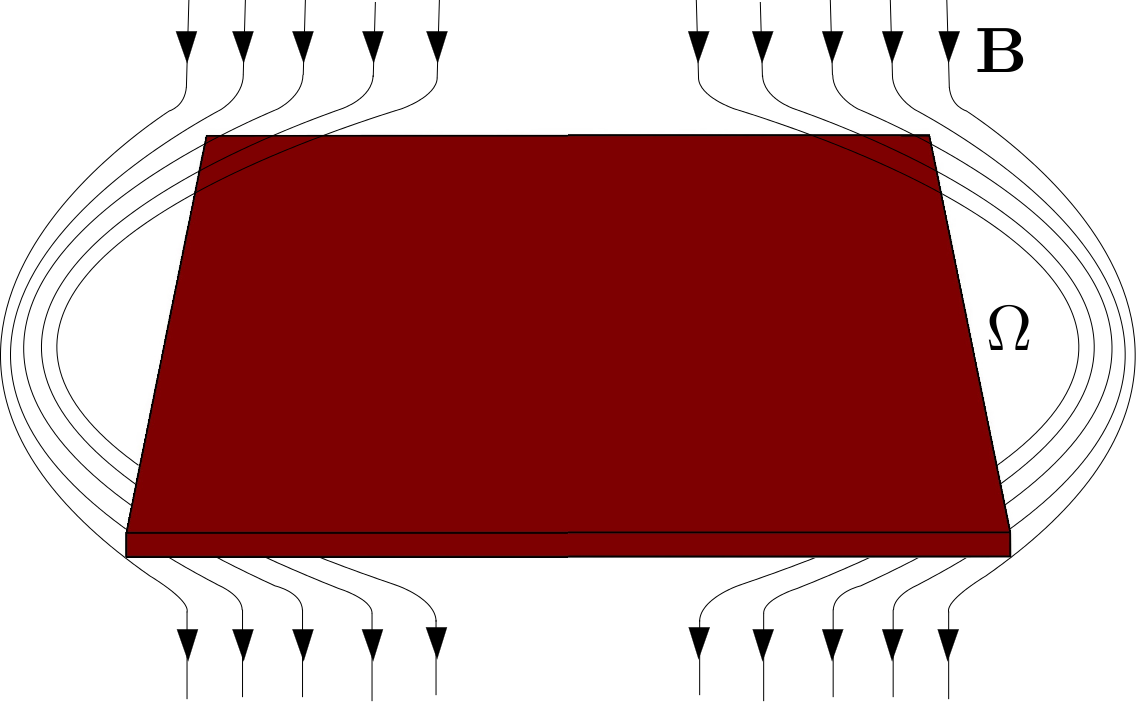}
		\caption{The superconducting state}
		\label{fig - superconducting state}
	\end{subfigure}
	
	\begin{subfigure}[h]{0.37\textwidth}
		\includegraphics[trim = 0mm 0mm 0mm 0mm,clip,scale=0.7]{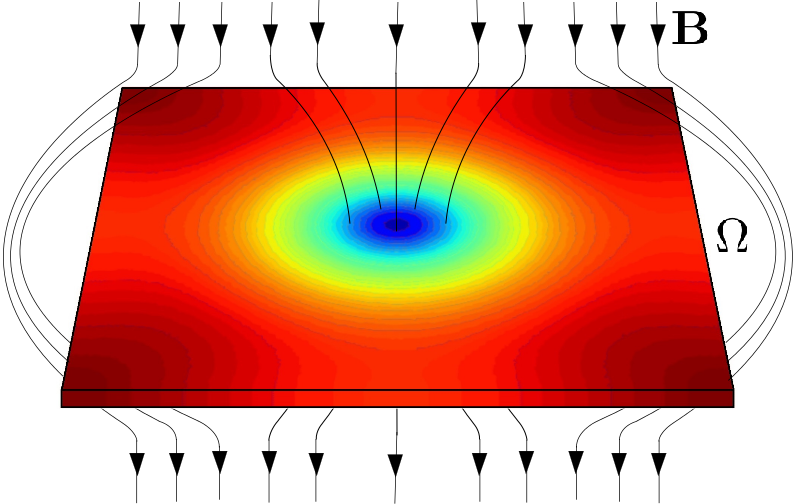}
		\caption{The mixed state}
		\label{fig - mixed state}
	\end{subfigure}
	\caption{Possible states of a superconducting material subject to an external magnetic field $\mathbf{H}_0$.} \label{fig - states}
\end{figure}

There are multiple types of superconductors. They are classified as
type-I and type-II. A type-I superconducting material can only be
in the normal or the superconducting state. In the normal state, or
homogeneously non-superconducting, the material behaves like a normal
conductor, i.e. the total magnetic field $\mathbf{B}$ entirely
penetrates the sample (Figure \ref{fig - normal state}). The
electrical current has a resistance. This state occurs for high
temperatures ($T>T_c$) or for magnetic field strengths above a certain
critical value $\mu_c$.  In contrast, in the homogeneously
superconducting state, the total magnetic field $\mathbf{B}$ is
entirely expelled from the interior of the sample and the material
exhibits zero electrical resistance (Figure \ref{fig - superconducting
  state}). This state occurs for low temperatures ($T<T_c$) provided
that the external magnetic field's strength is sufficiently low
($\mu<\mu_c$).

In type-II superconductors, an additional third state occurs for
temperatures and magnetic field strengths between two critical values
($T_{c_1}<T<T_{c_2}$ and $\mu_{c_1}<\mu<\mu_{c_2}$). In this case, the
magnetic field $\mathbf{B}$ only locally penetrates the material. This gives
rise to vortices of non-superconducting material around which
superconducting currents appear in the material (Figure \ref{fig -
  mixed state}). This extra state that only occurs for type-II
superconductors is called the \textit{mixed} state \cite{Goodman1966}.
It is the vortex patterns found in these type-II materials that are
the focus of the current paper.

The state of a superconductor is described by two quantities: the
total magnetic field $\mathbf{B}:\mathbb{R}^3\rightarrow \mathbb{R}^3$
and the density of Cooper pairs $\rho_C:\Omega \cup
\partial\Omega\rightarrow\mathbb{R}$. Cooper pairs are pairs of
electrons that constitute superconductivity: a high density
corresponds with the superconducting state. Both quantities are
determined by the Ginzburg-Landau system \cite{Du1992}, which includes
a partial differential equation for an order parameter
$\psi:\Omega\cup\partial\Omega \rightarrow \mathbb{C}$ with
$|\psi|^2=\rho_C$. In the current paper we will only consider
\textit{extreme} type-II superconductors, for which the
Ginzburg-Landau system decouples in a system for $\psi$ and the
magnetic field is independent of $\mathbf{A}$.

\subsection*{Motivation for the current paper}
The understanding how vortex patterns are formed and how they lose
their stability is important for multiple applications of extreme
type-II superconductors.  This is theoretically investigated by
numerically solving the steady states of the Ginzburg-Landau
system. After discretization it results in a large sparse non-linear
system. An efficient numerical solver for this system is described in
\cite{Schlomer_solver}, based on an AMG-preconditioned Newton-Krylov
method. Unlike other popular methods (e.g. time-stepping the
Ginzburg-Landau system via Gauss-Seidel iterations \cite{Baelus2002}),
this solver can find both physically stable and unstable solutions.
This solver is only one piece of the machinery that is required to
automatically generate bifurcation diagrams.

Though the direct use of unstable steady states is limited in real
situations, they still contain important information on transitions
between the stable patterns. This is further motivated by example
\ref{example - intro} below.

Numerical evidence in
\cite{Schlomer_solver} shows an independency between the number of
unknowns and the number of Krylov iterations required to converge,
demonstrating the optimality of the presented solver. To achieve this,
the solver makes full use of the sparsity properties of the system.

\begin{example} \label{example - intro}
	We consider a square material of side length $3$ subject to a
        homogeneous magnetic field with strength $\mu=1.3$. For these
        values, the material is in the mixed state, but can adopt two
        different stable patterns, shown in figure \ref{fig - intro
          example patterns} \cite{Schlomer_square}. Both patterns 1
        and 2 are stable, their respective energies are given by $-0.23387$
        and $-0.13271$.  We keep the value for the magnetic field
        strength fixed and want to perform a transition from pattern 1
        to pattern 2 through a temporary external perturbation, by supplying additional energy.
	
	It is, however, not sufficient to add the difference in
        energy between patterns 1 and 2 to the system. Instead, the minimal amount of energy that needs to be
        supplied is given by the energy difference between pattern 1 and
        the solution presented by pattern 3 (figure \ref{fig - intro
          example pattern 3}). This last pattern represents an
        unstable solution for the same magnetic field strength, with an energy of $-0.14007$. To
        perform a transition between patterns 1 and 2, we first need to supply energy to the system to reach the barrier implied by pattern
        3. Afterwards energy is released to reach the steady state
        described by pattern 2. This is further indicated in figure
        \ref{fig - intro example schematic transition}.
\end{example}

\begin{figure}
	\centering
	\begin{subfigure}[h]{0.35\textwidth}
		\centering
		\includegraphics[trim = 0mm 0mm 0mm 0mm,clip,scale=0.4]{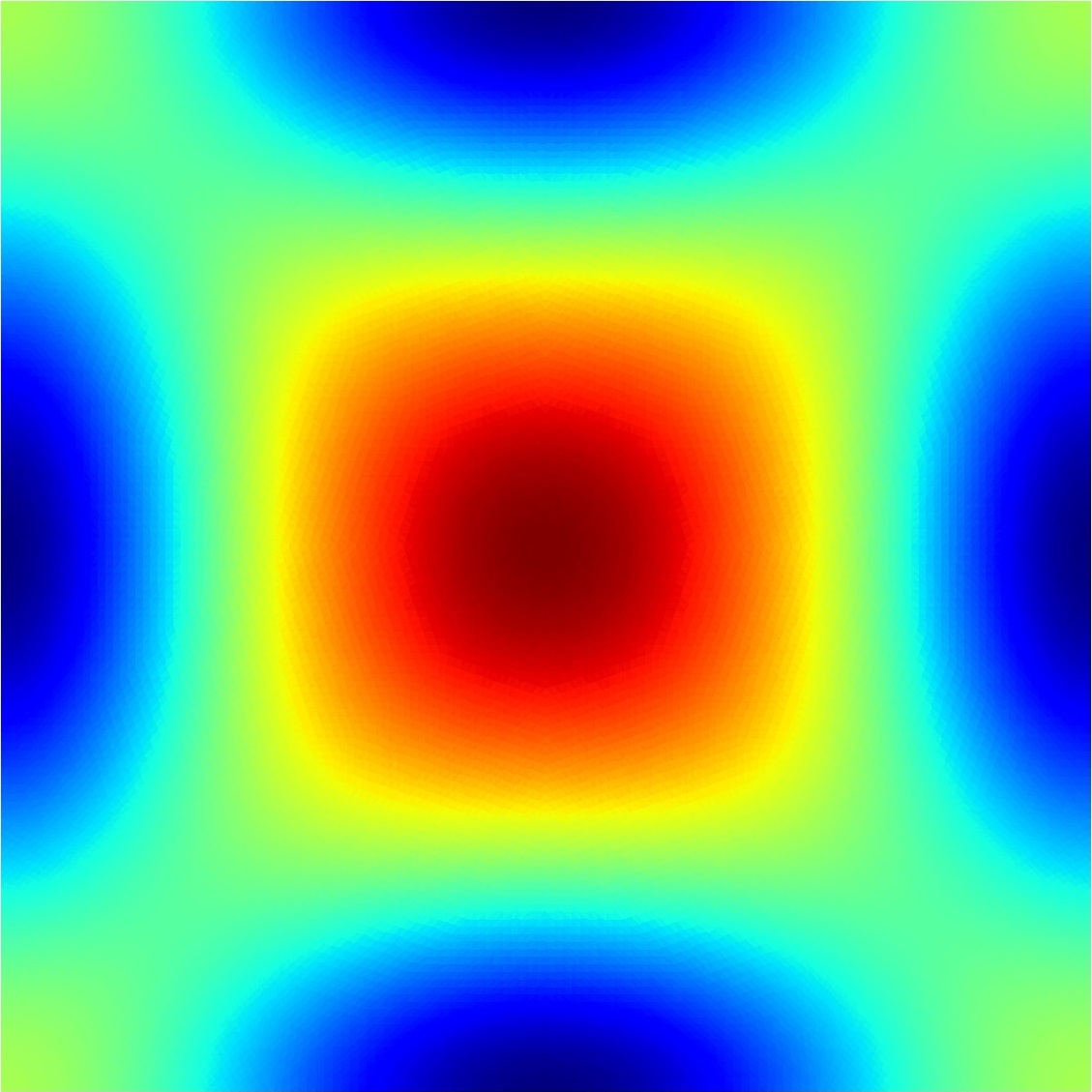}
		\caption{Pattern 1}
		\label{fig - intro example pattern 1}
	\end{subfigure} 
	\hfil
	\begin{subfigure}[h]{0.38\textwidth}
		\centering
		\includegraphics[trim = 0mm 0mm 0mm 0mm,clip,scale=0.4]{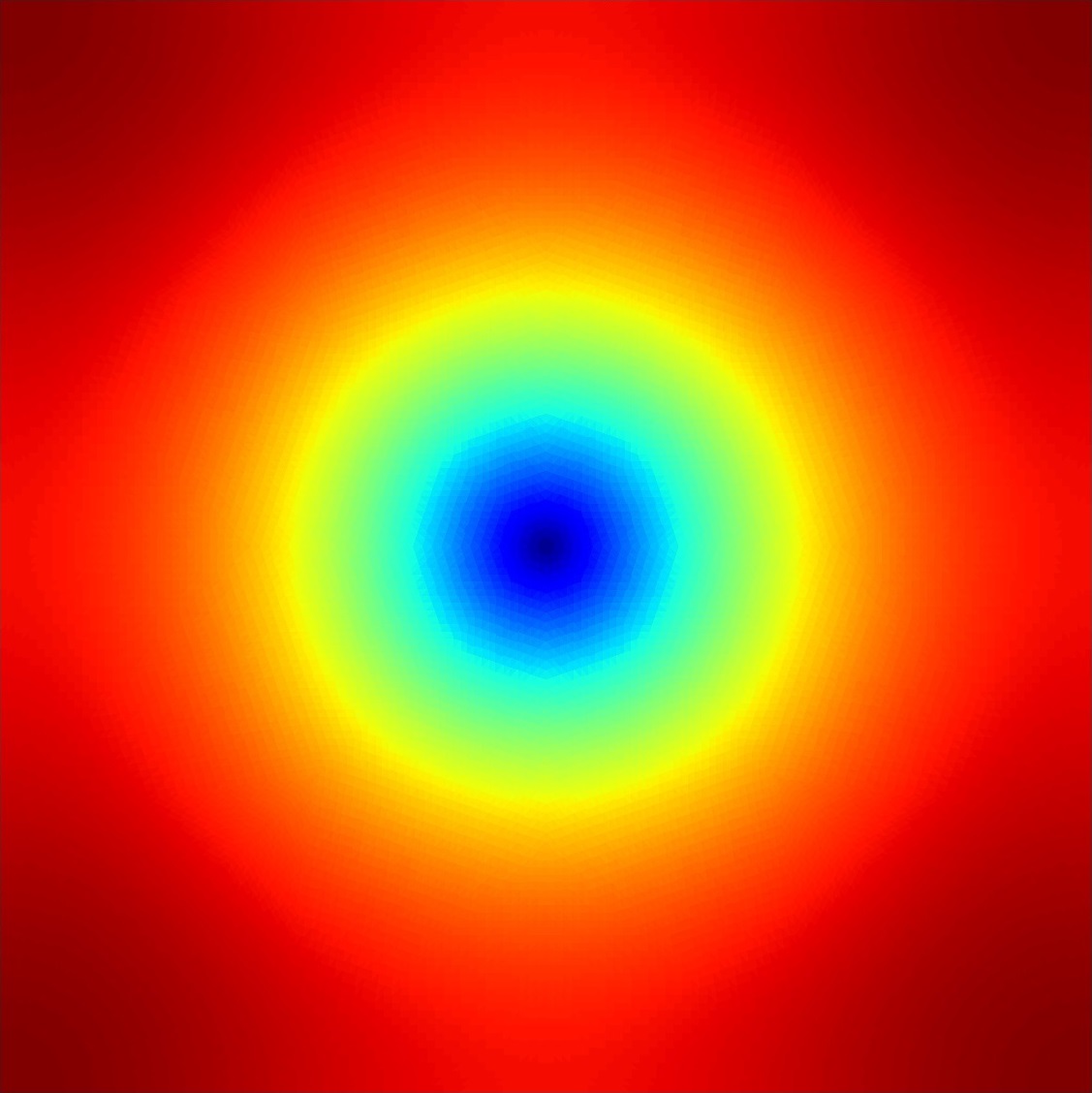}
		\caption{Pattern 2}
		\label{fig - intro example pattern 2}
	\end{subfigure} 
	
	\begin{subfigure}[h]{0.37\textwidth}
		\centering
		\includegraphics[trim = 0mm 0mm 0mm 0mm,clip,scale=0.4]{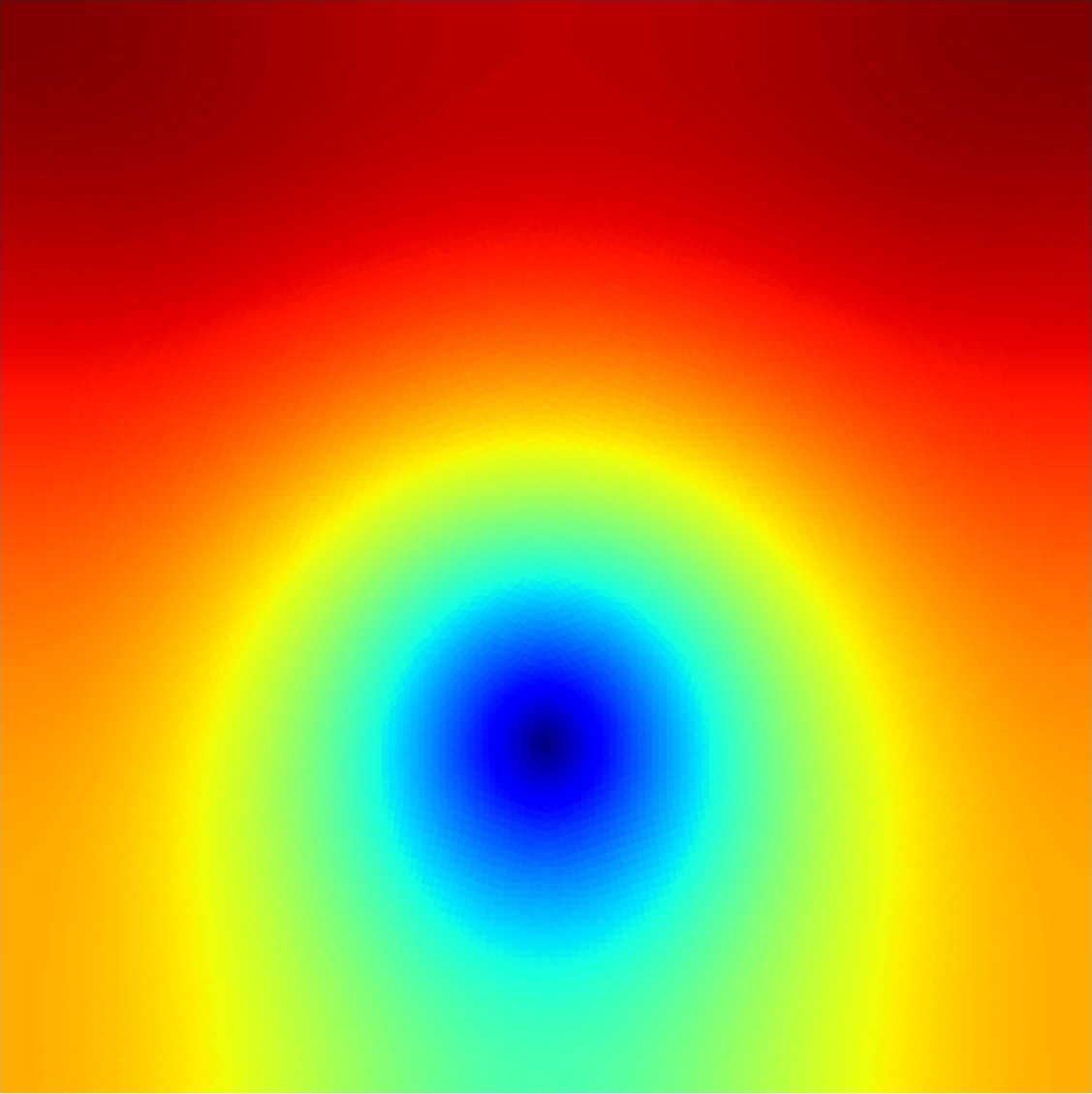}
		\caption{Pattern 3}
		\label{fig - intro example pattern 3}
	\end{subfigure} 

	\caption{Possible vortex patterns for the problem described in example \ref{example - intro}} \label{fig - intro example patterns}
\end{figure}

\begin{figure}
	\centering
	\includegraphics[trim = 0mm 0mm 0mm 0mm,clip,scale=0.4]{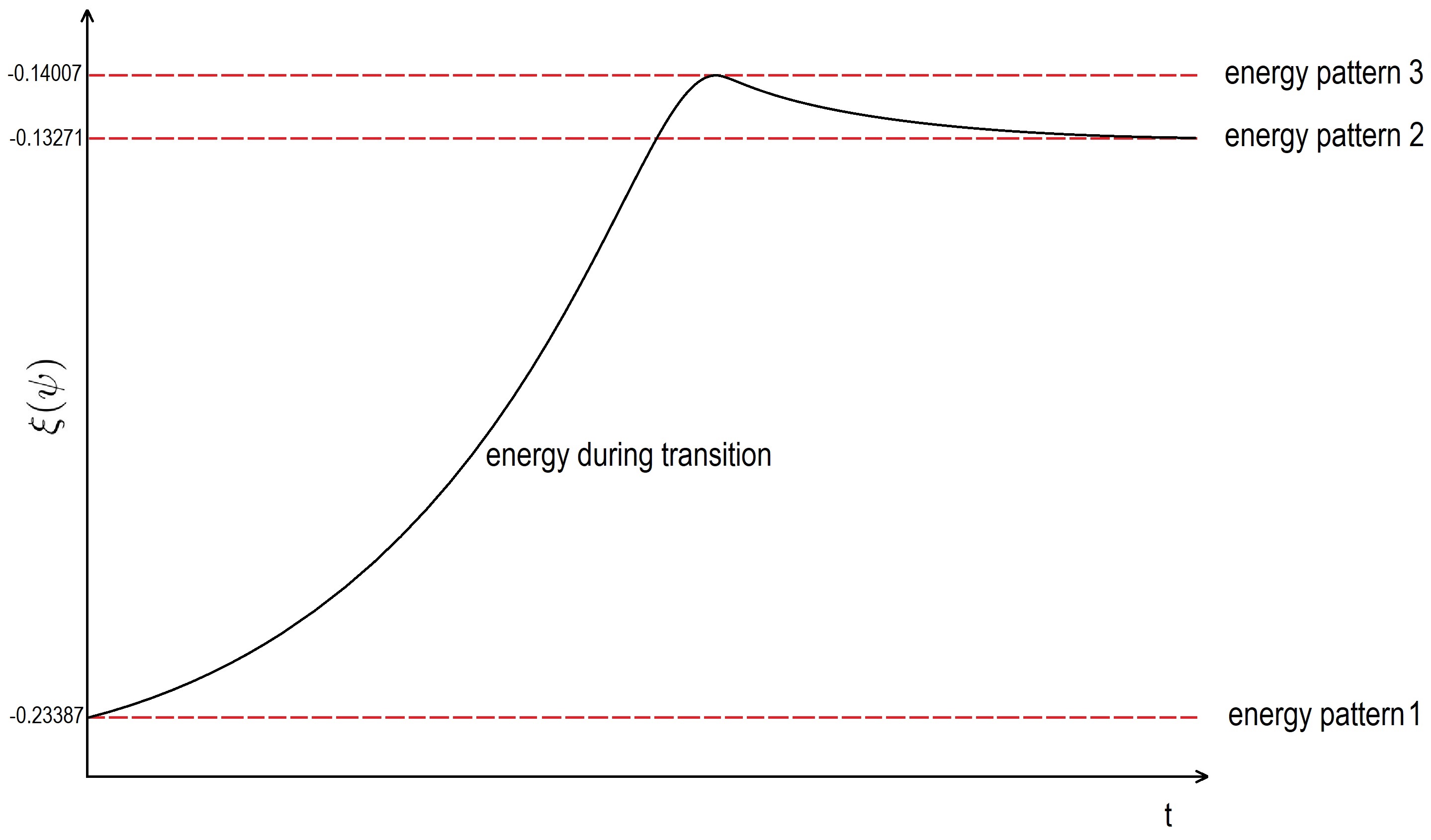}
	\caption{Schematic transition between the two stable patterns presented in example \ref{example - intro}. $t$ represents the time, $\xi(\psi)$ the energy of the state. In order to perform a transition between patterns 1 and 2, the energy barrier implied by pattern 3 needs to be crossed.}  \label{fig - intro example schematic transition}
\end{figure}

To investigate how vortex patterns change when certain parameters
(e.g. the strength of the applied magnetic field) are altered, a
numerical continuation of the Ginzburg-Landau system is
performed. This has already been the topic of some papers as well. In
\cite{Schlomer_square} two different sized square samples subject to a
homogeneous external magnetic field are concerned and their solution
landscapes are explored. In \cite{Schlomer_disc}, square samples in
the field of a magnetic disc are studied. Such bifurcation diagrams
are helpful when facing problems like the one described in example
\ref{example - intro} as well. This is indicated by example
\ref{example - intro cont}.

\begin{example} \label{example - intro cont}
	We again consider the set-up of example \ref{example -
          intro}. Figure \ref{fig - intro example bifurcation diagram}
        was recreated from \cite{Schlomer_square} and shows the
        bifurcation diagram of the problem. At parameter value
        $\mu=1.3$ we have multiple solutions: two stable ones that lie
        on curves A (pattern 1 of figure \ref{fig - intro example
          patterns}) and curve D (pattern 2), and two unstable ones on
        curves B and C (pattern 3). In order to perform a transition
        from the solution on A to the one on D, we need to add enough
        energy to bypass the barrier induced by the unstable solution
        on curve C - the unstable solution with the lowest energy that
        lies on a curve connecting A and D.
\end{example}

\begin{figure}
	\centering
	\includegraphics[trim = 0mm 0mm 0mm 0mm,clip,scale=0.4]{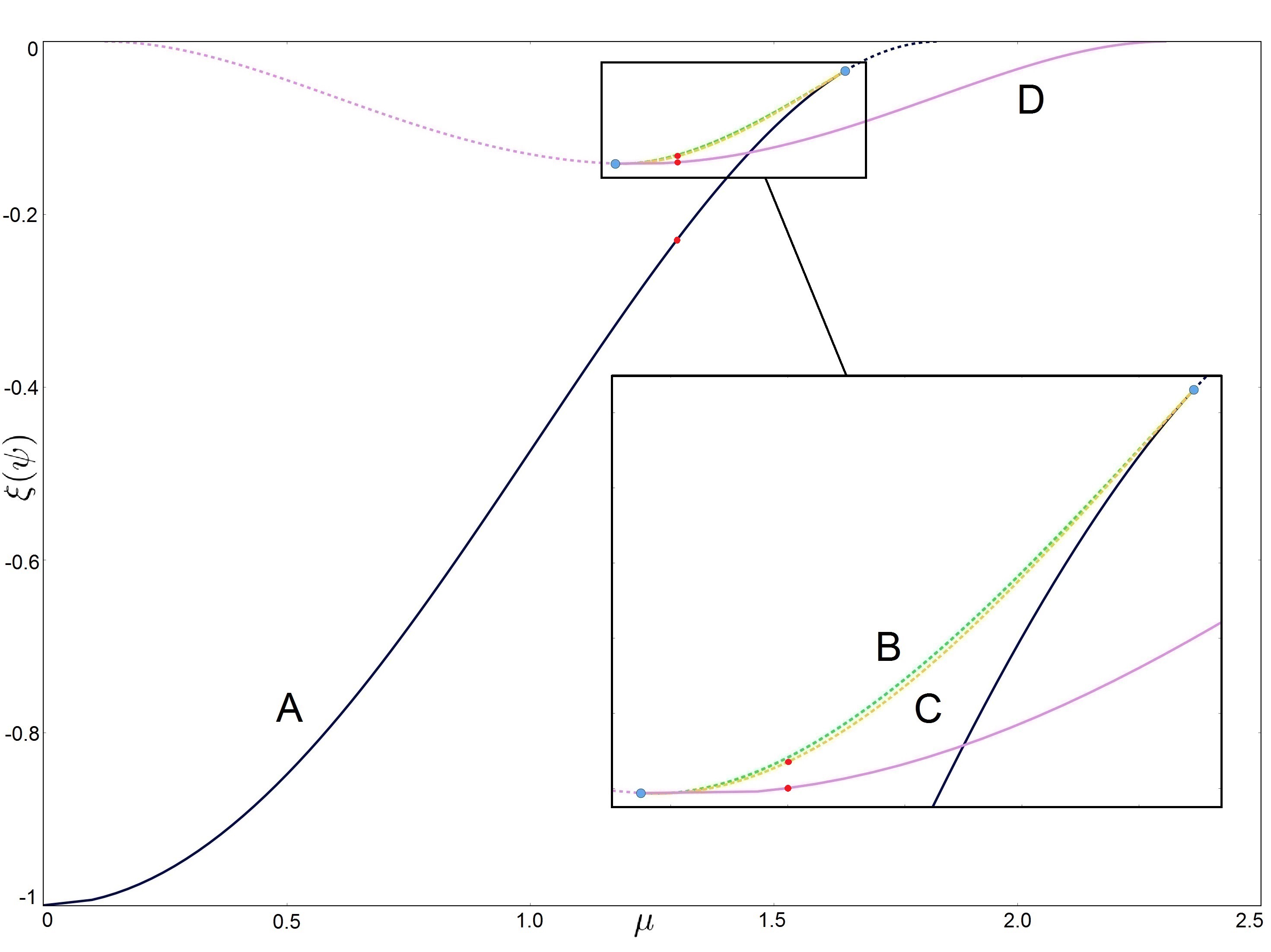}
	\caption{Recreated from \cite{Schlomer_square}. Bifurcation diagram connected to example \ref{example - intro}. Blue dots indicate bifurcation points, red dots the points associated with the patterns given in figure \ref{fig - intro example patterns}.} \label{fig - intro example bifurcation diagram}
\end{figure}

Though the solution landscapes provided by \cite{Schlomer_square} and
\cite{Schlomer_disc} already contain a lot of information, they were
not automatically explored and are possibly incomplete. To fully
investigate the possible steady states and their transitions, it is
essential to use automatic exploration techniques in order to receive
a complete solution landscape.

Besides the tools already mentioned earlier (such as AUTO \cite{Auto}
and MatCont \cite{MatCont}) there is LOCA \cite{Loca} that is
developed around sparse linear algebra, but is less easy to use and
requires expert knowledge in HPC hardware. Furthermore it does not
include a branch switching functionality.

The goal of the current paper is to extend the analysis done in
\cite{Schlomer_square} and \cite{Schlomer_disc} by providing a robust
method that does not only perform a numerical continuation, but
automatically explores the solution landscape as well. We will use the
Python package PyNCT \cite{Draelants2015} as a basis for the
implementation, and extend it with automatic exploration
techniques. The solver described in \cite{Schlomer_solver} is an
important building block for this purpose, as it fully exploits the
sparsity of the equations. To the best of our knowledge, an automatic
exploration of the solution landscape of the Ginzburg-Landau problem
has never been provided before.

To emphasize the details of the Ginzburg-Landau problem, in the
current paper we will specifically derive the methods required for
automatic exploration using this problem. The algorithms are however
easily extended for general problems, and a general version was
implemented in PyNCT as well.

\subsection*{Automatic exploration techniques}
Automatic exploration of the bifurcation diagram is realized in two
steps. First, the bifurcation points that are encountered during the
continuation of the patterns are identified. In these points the
Jacobian of the system has one or multiple zero eigenvalues. A method
suggested in \cite{Mei2000} uses this property to identifiy the
bifurcations; after each numerical continuation step the lowest
magnitude Ritz values of the Jacobian are monitored to indicate a
nearby bifurcation point. If a nearby bifurcation point is indicated
an extended system of non-linear equations is solved that identifies
the solution and the parameter value of the bifurcation point,
simultaniously. It is solved by a Newton-Krylov solver.

The second step of this automatic exploration constructs for each
bifurcation point, identified in the first step, the tangent
directions of the solution curves emerging from these points.  For
this purpose we propose two different methods in the current paper.

The first method is based on a Lyapunov-Schmidt reduction
\cite{Mei2000}. Here, the problem is reduced to a low dimensional
system of algebraic equations, that contain all of the information
about the bifurcation's behaviour and are easier to analyse. These
systems define the tangent directions to the intersecting solution
branches. We propose an iterative algorithm for the
determination of these directions that is inpired by the
analysis done in \cite{Mei1996}. The derived algorithm can always be
applied to the bifurcations that are encountered in the
Ginzburg-Landau equation, but requires multiple solves of a linear
system. The amount of these solves is linked to the symmetry group
of the problem, and this considerably reduces the speed of the algorithm
when highly symmetrical materials (e.g. a decagon) are considered.

We also propose a second method that is based on the equivariant
branching lemma. Here, the lemma links the symmetry group of the
problem to predict the symmetries of the branches that emerge at
bifurcation points \cite{Golubitsky2002,Hoyle2006}. In
\cite{Schlomer_square} the equivariant branching lemma was used for
the prediction of the symmetry groups of emerging branches, but it was
not used for the calculation of the tangent directions
themselves. This alternative to the first approach only requires a
single linear system solve and is, hence, a good alternative. However,
its use is limited to the class of samples of materials with dihedral
symmetry.

These two methods allow us to determine the tangent directions of the
solution curves that emerge from a given branch point. For each new
tangent direction, a new numerical continuation is started, and the
processes of calculating branch points and constructing tangent
directions are repeated. Eventually no new (connected) branch points
can be found and the algorithm stops, providing us with a complete solution
diagram of multiple interconnected solution curves.  The Python
implementation of the full algorithm was tested for multiple examples
of two-dimensional superconductors, each with different
symmetries. For each example a complete solution landscape was found
(in the sense of interconnecting curves, the ones not in any way
connected to the starting point cannot be found), emphasizing the
robustness of the algorithm.

\subsection*{Outline}
The remainder of the article is organized as follows. Section
\ref{sectie Ginzburg-Landau systeem} reviews the Ginzburg-Landau
system for extreme-type-II superconductors and its symmetries. The
properties of its Jacobian and other derivatives are also
discussed. Details of numerical continuation can be found in section
\ref{sectie numerieke continuatie}: the solver described by
\cite{Schlomer_solver} is shortly discussed and its role in our method
is indicated, as well as some details of challenges we encountered for
the numerical continuation itself. The main contribution of the
article is contained in sections \ref{sectie bifurcaties} and
\ref{sectie tangents}: while section \ref{sectie bifurcaties}
discusses the detection and determination of bifurcation points,
section \ref{sectie tangents} concerns the construction of the tangent
directions to new solution branches. Numerical computations of
solution landscapes for multiple two-dimensional samples are included
in section \ref{sectie resulaten}. For these examples the strength of
the applied magnetic field is used as a bifurcation parameter. Section
\ref{sectie conclusie} concludes with a discussion of the obtained
results. Appendix \ref{app proof} contains the proof of an algorithm
provided in section \ref{sectie tangents}, appendix \ref{app calc}
contains some details on how to calculate certain terms in this
algorithm.

\subsection*{Notations}
For $z\in\mathbb{C}$, we denote $\mathfrak{Re}(z)$ and
$\mathfrak{Im}(z)$ as respectively its real and imaginary
part. $\bar{z}$ is used for complex conjugation. Similarly, for
complex valued functions $\varphi:\Omega\rightarrow\mathbb{C}$ we
define $\bar{\varphi}:\Omega\rightarrow\mathbb{C}$ such that $\forall
x\in\Omega: \bar{\varphi}(x) = \overline{\varphi(x)}$. Adjoints of
linear operators $\mathcal{A}$ are denoted by $\mathcal{A}^*$. For the
symmetry groups under consideration, the symbol $S^1$ is used to
denote the circle group $\{z\in\mathbb{C}: |z|=1\}$, $D_m$ ($m\geq 2$)
for dihedral groups and $C_m$ ($m\geq 2$) for cyclic groups.

\section{The Ginzburg-Landau system and its properties} \label{sectie Ginzburg-Landau systeem}
\subsection*{Description of the system}
We will only consider the Ginzburg-Landau problem for extreme type-II superconductors. In \cite{Schlomer_square} it is shown that the Ginzburg-Landau equations decouple for this case, simplifying the problem. A short overview of the resulting equation is given below, based on the analysis done in \cite{Schlomer_square}. \\

For an open, bounded domain $\Omega\subset\mathbb{R}^3$, with a piecewise smooth boundary $\partial \Omega$, the Ginzburg-Landau problem is derived by minimizing the Gibbs free energy functional \cite{Du1992}
\begin{align} \label{Gibbs free energy functional}
	\begin{aligned}
		G(\psi,\mathbf{A}) - G_n = \xi \frac{|\alpha|^2}{\beta} \int_{\Omega} \bigg[ &-|\psi|^2 + \frac{1}{2} |\psi|^4 + ||-\i \nabla \psi - \mathbf{A} \psi ||^2 \\
		&+ \kappa^2 (\nabla\times \mathbf{A})^2 -2\kappa^2 (\nabla\times \mathbf{A}) \cdot \mathbf{H}_0 \bigg] d\Omega.
	\end{aligned}
\end{align}
The state $(\psi,\mathbf{A})$ is in the natural energy space such that the integral is well-defined. $\psi$ represents a scalar-valued function and is commonly referred to as the \textit{order parameter}, and the magnetic vector potential corresponding to the total magnetic field is given by $\mathbf{A}$. The physical observables associated with the state $(\psi,\mathbf{A})$ are the density $\rho_C=|\psi|^2$ of the Cooper pairs and the total magnetic field $\mathbf{B} = \nabla \times \mathbf{A}$. The constant $G_n$ represents the energy associated with the normal (non-superconducting) state.
The energy \eqref{Gibbs free energy functional} depends upon the impinging magnetic field $\mathbf{H}_0$ and the material parameters $\alpha,\beta,\lambda,\xi\in\mathbb{R}$. It is presented in its dimensionless form, where the domain $\Omega$ is scaled in units of the coherence length $\xi$. The ratio $\kappa=\lambda/\xi$ of the penetration depth $\lambda$ and the coherence length $\xi$ completely determines the type of the superconductor: for $\kappa<1/\sqrt{2}$ it is said to be of type I, otherwise the superconductor is said to be of type II.

Using standard calculus of variations, minimization of the Gibbs free energy functional gives rise to the Ginzburg-Landau equations \cite{Du1992}: a boundary-value problem in the unknowns $\psi$ and $\mathbf{A}$. For extreme type-II superconductors the limit $\kappa\rightarrow\infty$ is considered, in this case the Ginzburg-Landau problem decouples for $\psi$ and $\mathbf{A}$ \cite{Schlomer_square}. The magnetic vector potential $\mathbf{A}$ is completely determined by the applied magnetic field $\mathbf{H}_0$ through the system
\begin{equation} \label{formule voor magnetische vector potentiaal}
	\begin{cases}
		\nabla \times (\nabla \times \mathbf{A}) = 0 &\text{in } \Omega, \\
		\mathbf{n} \times (\nabla \times \mathbf{A}) = \mathbf{n} \times \mathbf{H}_0 &\text{on } \partial\Omega.
	\end{cases}
\end{equation}

With the magnetic vector potential determined, the order parameter $\psi$ is derived by solving the equation
\begin{align}
	\begin{aligned} \label{Ginzburg-Landau vergelijking}
		&\mathcal{GL}:X\times\mathbb{R} \rightarrow Y, \\
		&0 = \mathcal{GL}(\psi,\mu) =
		\begin{cases}
			(-\i\nabla -\mathbf{A}(\mu))^2\psi - \psi(1-|\psi|^2) &\text{in } \Omega, \\
			\mathbf{n} \cdot (-\i\nabla -\mathbf{A}(\mu))\psi &\text{on } \partial\Omega.
		\end{cases}
	\end{aligned}
\end{align}
The space $X$ corresponds to the natural energy space over $\Omega$ associated with the Gibbs energy \eqref{Gibbs free energy functional} and $Y$ to its dual space. The dependence on the strength of the applied magnetic field ($\mu$) is explicitly given since it will be used as a bifurcation parameter. For the remainder of the paper we will write $\mathbf{A}$ instead of $\mathbf{A}(\mu)$, dropping the explicit dependency on $\mu$. \\

In the present paper, we will consider two-dimensional grids $\Omega\subset \mathbb{R}^2$ and numerically solve \eqref{Ginzburg-Landau vergelijking} for the order parameter. A bifurcation analysis will be carried out with the strength of the applied magnetic field ($\mu$) as the bifurcation parameter.

\subsection*{Symmetries}
The equivariant branching lemma is crucial for one of the automatic exploration methods. To apply this lemma, the symmetry group of \eqref{Ginzburg-Landau vergelijking} needs to be determined. In \cite{Schlomer_square} it is shown that, for square samples subject to a perpendicular, homogeneous magnetic field, this symmetry group is given by $S^1\times D_4$, with $S^1$ and $D_4$ respectively the circle group and symmetry group of the square. A more general result is given in proposition \ref{prop - symmetrie}. \\

\begin{proposition} \label{prop - symmetrie}
	Let the sample $\Omega\subset\mathbb{R}^2$ and the applied magnetic field $\mathbf{H}_0$ both be invariant under the actions of a dihedral group $D_m=\langle\tau_\omega,\sigma\rangle$ (with $\omega=2\pi/m$, $m\in\mathbb{N}$) defined by
	\begin{align*}
		&\tau_\omega: \mathbb{R}^2 \rightarrow \mathbb{R}^2: \begin{pmatrix}
			x \\ y
		\end{pmatrix} \rightarrow \begin{pmatrix}
			\cos(\omega) & -\sin(\omega) \\
			\sin(\omega) & \cos(\omega)
		\end{pmatrix}\begin{pmatrix}
			x \\ y
		\end{pmatrix}, \\
		&\sigma: \mathbb{R}^2 \rightarrow \mathbb{R}^2: \begin{pmatrix}
			x \\ y
		\end{pmatrix} \rightarrow \begin{pmatrix}
			-x \\ y
		\end{pmatrix},
	\end{align*}
	then \eqref{Ginzburg-Landau vergelijking} is invariant under the actions of $S^1 \times D_m = \langle\theta_\eta,\tau_\omega,\sigma\rangle$, given by
	\begin{align*}
		&\theta_\eta: X\rightarrow X: \psi \rightarrow e^{\i\eta}\psi \quad (\forall \eta \in [0,2\pi]), \\
		&\tau_\omega: X\rightarrow X: \psi(x,y) \rightarrow \psi(\tau_\omega(x,y)), \\
		&\sigma: X\rightarrow X: \psi(x,y) \rightarrow \overline{\psi(-x,y)}.
	\end{align*}
	A similar result holds for invariance under the actions of the cyclic group $C_m = \langle\tau_\omega\rangle$.
\end{proposition}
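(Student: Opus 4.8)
The plan is to show that the nonlinear operator $\mathcal{GL}(\cdot,\mu)$ is \emph{equivariant} under each of the three generators $\theta_\eta,\tau_\omega,\sigma$ separately; since these generate the whole group, invariance of \eqref{Ginzburg-Landau vergelijking} under $S^1\times D_m$ then follows at once. Throughout I abbreviate the covariant derivative by $\mathbf{D}=-\i\nabla-\mathbf{A}$, with components $D_k=-\i\partial_k-A_k$, so that the interior part of $\mathcal{GL}$ reads $\mathbf{D}\cdot\mathbf{D}\psi-\psi(1-|\psi|^2)$ and the boundary part reads $\mathbf{n}\cdot\mathbf{D}\psi$. I write $R$ for the orthogonal matrix of $\tau_\omega$ and $S=\diag(-1,1)$ for the matrix of the reflection acting on $\mathbb{R}^2$. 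The cubic term $\psi(1-|\psi|^2)$ is the benign one, since it involves no derivatives and depends on $\psi$ only through $|\psi|^2$; all the work is in the kinetic term and the boundary term.

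The crucial preliminary step, and the main obstacle, is to extract the correct transformation behaviour of the data $\mathbf{A}$ and $\mathbf{n}$ from the hypothesis that $\Omega$ and $\mathbf{H}_0$ are $D_m$-invariant. Because the curl $\nabla\times(\nabla\times\,\cdot\,)$ in \eqref{formule voor magnetische vector potentiaal} is rotation-covariant but is sensitive to orientation, the orientation-reversing reflection flips the induced field; I therefore expect $\mathbf{A}$ to be \emph{covariant} under rotations and \emph{anti-covariant} under the reflection,
\[
R^{\top}\mathbf{A}(R\mathbf{r}) = \mathbf{A}(\mathbf{r}),
\qquad
S\,\mathbf{A}(S\mathbf{r}) = -\mathbf{A}(\mathbf{r}).
\]
These relations should follow by applying the symmetry to the linear boundary-value problem \eqref{formule voor magnetische vector potentiaal} and invoking uniqueness in a fixed gauge; both are transparent, as a sanity check, in the symmetric gauge $\mathbf{A}=\tfrac{\mu}{2}(-y,x)$ of a homogeneous perpendicular field. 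Invariance of $\Omega$ gives the analogous covariance of the outward normal, $\mathbf{n}(R\mathbf{r})=R\,\mathbf{n}(\mathbf{r})$ and $\mathbf{n}(S\mathbf{r})=S\,\mathbf{n}(\mathbf{r})$. Reconciling the sign reversal of $\mathbf{A}$ with the complex conjugation built into the action of $\sigma$ is the conceptual heart of the whole argument.

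Granting these relations, the phase $\theta_\eta$ is immediate: the constant factor $e^{\i\eta}$ commutes with $\mathbf{D}$ and with $\mathbf{n}\cdot\mathbf{D}$, and $|e^{\i\eta}\psi|^2=|\psi|^2$ leaves the cubic term unchanged, so $\mathcal{GL}(\theta_\eta\psi)=e^{\i\eta}\mathcal{GL}(\psi)$. For the rotation, the chain rule gives $\nabla(\psi\circ R)=R^{\top}(\nabla\psi)\circ R$, and combining this with the covariance of $\mathbf{A}$ yields the first-order intertwining $(D_k(\psi\circ R))(\mathbf{r})=\sum_l R_{lk}(D_l\psi)(R\mathbf{r})$. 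Applying $\mathbf{D}$ a second time and summing over $k$, the identity $\sum_k R_{lk}R_{mk}=\delta_{lm}$ produced by orthogonality collapses the mixed second derivatives and leaves $\mathbf{D}\cdot\mathbf{D}(\psi\circ R)=(\mathbf{D}\cdot\mathbf{D}\psi)\circ R$. Pairing the first-order identity with $\mathbf{n}(R\mathbf{r})=R\,\mathbf{n}(\mathbf{r})$ does the same for the boundary term, so $\tau_\omega$ maps solutions to solutions.

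For the reflection I combine the chain rule with complex conjugation, using that conjugation sends $-\i\nabla$ to $\i\nabla$ together with the anti-covariance $S\mathbf{A}(S\mathbf{r})=-\mathbf{A}(\mathbf{r})$; a short computation then gives the first-order intertwining $(D_k(\sigma\psi))(\mathbf{r})=-S_{kk}\,\overline{(D_k\psi)(S\mathbf{r})}$, in which the two sign flips, one from conjugating $-\i\nabla$ and one from the reflected vector potential, are exactly what is needed to recover $\mathbf{D}$. Iterating and using $S_{kk}^2=1$ restores the signs and yields $\mathbf{D}\cdot\mathbf{D}(\sigma\psi)=\overline{(\mathbf{D}\cdot\mathbf{D}\psi)\circ S}=\sigma(\mathbf{D}\cdot\mathbf{D}\psi)$, while $|\psi|^2$ is conjugation-invariant so the cubic term again commutes; on the boundary the same intertwining with $\mathbf{n}(S\mathbf{r})=S\,\mathbf{n}(\mathbf{r})$ gives $\mathbf{n}\cdot\mathbf{D}(\sigma\psi)=-\,\overline{(\mathbf{n}\cdot\mathbf{D}\psi)\circ S}$, where the harmless overall sign does not affect the vanishing of the boundary condition. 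Thus $\sigma$ too maps solutions to solutions. Since $\theta_\eta,\tau_\omega,\sigma$ generate the group $S^1\times D_m$ of the statement, invariance under each generator propagates to invariance under the whole group, and dropping $\sigma$ gives the stated $C_m$ version verbatim.
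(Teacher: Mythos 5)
The paper never proves this proposition: it is stated as the general version of the $S^1\times D_4$ result for the square, with a citation to the Schl\"omer--Vanroose paper, and the appendices only treat Algorithm 4.2. Your proposal therefore supplies an argument the paper omits, and it is correct: generator-wise equivariance is the right reduction, your intertwining relations $(D_k(\psi\circ R))(\mathbf{r})=\sum_l R_{lk}(D_l\psi)(R\mathbf{r})$ and $(D_k(\sigma\psi))(\mathbf{r})=-S_{kk}\overline{(D_k\psi)(S\mathbf{r})}$ check out, the orthogonality contraction $\sum_k R_{lk}R_{mk}=\delta_{lm}$ and $S_{kk}^2=1$ close the second-order terms, and you correctly identify that the whole statement hinges on the anti-covariance $S\,\mathbf{A}(S\mathbf{r})=-\mathbf{A}(\mathbf{r})$ --- the in-plane reflection flips the pseudoscalar field $\nabla\times\mathbf{A}$, and the complex conjugation in the action of $\sigma$ is precisely what compensates for it. The harmless extra sign you find on the boundary operator $\mathbf{n}\cdot\mathbf{D}$ indeed does not affect the zero set. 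One point deserves emphasis rather than a parenthetical: system \eqref{formule voor magnetische vector potentiaal} determines $\mathbf{A}$ only up to gauge, so the covariance and anti-covariance relations hold exactly only after a symmetry-compatible gauge fixing; without it, equivariance of $\mathcal{GL}$ holds only up to a local phase $e^{\i\chi}$, which is not an element of the $S^1$ action in the proposition. You flag this ("uniqueness in a fixed gauge"), and your verification in the symmetric gauge $\mathbf{A}=\tfrac{\mu}{2}(-y,x)$ settles the homogeneous perpendicular field used in all of the paper's examples, so this is a correctly handled caveat, not a gap; a fully general statement should simply make the symmetric gauge choice explicit as a hypothesis.
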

Note that proposition \ref{prop - symmetrie} is only valid for 2-dimensional materials. For 3-dimensional ones an analogue result can be derived, but this is beyond the scope of the current paper.

Without any sample or magnetic field restrictions, the symmetry group of \eqref{Ginzburg-Landau vergelijking} is
\begin{displaymath}
	\Gamma = S^1 \times D_\infty.
\end{displaymath}
The shape of the material (and possibly its discretization) and magnetic field give rise to restrictions on this group. As example, for a triangular sample subject to a homogeneous magnetic field, the symmetry group is given by $S^1\times D_3$. A circular sample subject to a (centered) magnetic square would restrict the group to $S^1\times D_4$.

The continuous $S^1$ symmetry persists for every choice of material and magnetic field: every solution $\psi$ of \eqref{Ginzburg-Landau vergelijking} is actually a representative of a whole family of solutions $\{\theta_\eta \psi | \theta_\eta\in S^1\}$. Since we are mainly interested in the density of Cooper pairs $\rho_c=|\psi|^2$ and the equation $|\psi|=|\theta_\eta\psi|$ holds for each $\theta_\eta\in S^1$, it is sufficient to consider a single representative for each family \cite{Schlomer_square}.

\subsection*{Properties of the Jacobian operator}
The solver presented in \cite{Schlomer_solver} is based on a Newton-Krylov algorithm. In order to apply this algorithm to \eqref{Ginzburg-Landau vergelijking}, the (partial) Jacobian operator (to $\psi$) associated with this equation is required. In \cite{Schlomer_square} an expression for this operator is derived:
\begin{equation} \label{Jacobiaan naar psi}
	\mathcal{J}_\psi(\psi,\mu): X\rightarrow Y: \varphi \rightarrow \left((-\i\nabla-\mathbf{A})^2-1+2|\psi|^2\right)\varphi + \psi^2 \overline{\varphi}.
\end{equation}
This operator is only linear when defined over $X$ and $Y$ as $\mathbb{R}$-vector spaces \cite{Schlomer_square}.
The Jacobian operator is self-adjoint with respect to the inner product
\begin{equation} \label{inproduct}
	\langle\cdot,\cdot\rangle_\mathbb{R} = \mathfrak{Re}\langle\cdot,\cdot\rangle_{L^2_{\mathbb{C}}(\Omega)},
\end{equation}
which coincides with the natural inner product in $(L^2_{\mathbb{R}}(\Omega))^2$ \cite{Schlomer_square}.
The spectrum of \eqref{Jacobiaan naar psi} has been investigated in both \cite{Schlomer_square} and \cite{Schlomer_solver}. These papers show that this spectrum is a subset of $\mathbb{R}$, but $\mathcal{J}_\psi(\psi,\mu)$ is generally not definite. Definiteness is completely determined by the state ($\psi,\mathbf{A}$). Solutions $\psi$ for which the Jacobian operator does not have positive eigenvalues are said to be physically stable, and physically unstable otherwise.

Another result that's discussed in \cite{Schlomer_square} and \cite{Schlomer_solver} is the zero eigenvalue induced by the continuous $S^1$ symmetry. For every $\psi_s\in X,\mu_s\in\mathbb{R}$ we have
\begin{displaymath}
	\mathcal{GL}(\psi_s,\mu_s)=0 \Rightarrow \mathcal{J}_\psi(\psi_s,\mu_s)(\i\psi_s) = 0,
\end{displaymath}
implying that the kernel of the Jacobian operator is never empty in a solution ($\psi_s,\mu_s$) since $\{\i\psi_s\}\subset \ker(\mathcal{J}_\psi(\psi_s,\mu_s))$.

Aside from the partial Jacobian operator to $\psi$, the one to $\mu$ is also required for numerical continuation. We will approximate the Jacobian operator $\mathcal{J}_\mu(\psi,\mu)$ by a second-order finite difference scheme. Finally, the automatic exploration methods require the full Jacobian of \eqref{Ginzburg-Landau vergelijking}, defined by
$$\mathcal{J}(\psi,\mu):X\times\mathbb{R}\rightarrow Y: \begin{pmatrix}
\varphi \\
\kappa
\end{pmatrix}\rightarrow \mathcal{J}_\psi(\psi,\mu)\varphi + \kappa\mathcal{J}_\mu(\psi,\mu).$$

\subsection*{Other derivatives of the Ginzburg-Landau equations}
The partial Hessian operators ($\mathcal{H}_{\psi\psi}(\psi,\mu)$, $\mathcal{H}_{\psi\mu}(\psi,\mu)$ and $\mathcal{H}_{\mu\mu}(\psi,\mu)$) of \eqref{Ginzburg-Landau vergelijking} are crucial for the determination of bifurcation points and new tangent directions. We will again apply second-order finite difference schemes to approximate the operators $\mathcal{H}_{\psi\mu}(\psi,\mu)$ and $\mathcal{H}_{\mu\mu}(\psi,\mu)$.
Instead of approximating $\mathcal{H}_{\psi\psi}(\psi,\mu)$ by finite differences as well, we derive an exact expression for this operator. This is done by applying a similar technique as in \cite{Schlomer_square} for the derivation of the Jacobian operator. Let $\psi,\varphi,\delta \psi\in X$ and $\mu\in\mathbb{R}$. We have
\begin{align*}
	\mathcal{J}_{\psi}(\psi+\delta\psi,\mu)\varphi-\mathcal{J}_\psi(\psi,\mu)\varphi =&\left((-i\nabla-\mathbf{A})^2-1+2|\psi+\delta\psi|^2\right)\varphi + (\psi+\delta\psi)^2\overline{\varphi} \\
	&-\left((-i\nabla-\mathbf{A})^2-1 +2|\psi|^2\right)\varphi -\psi^2\overline{\varphi} \\
	=& 2\left(\overline{\psi}\delta\psi\varphi+\psi\overline{\delta\psi}\varphi+\psi\delta\psi\overline{\varphi}\right) + 2|\delta\psi|^2\varphi + \left(\delta\psi\right)^{2}\bar{\varphi}.
\end{align*}
The Hessian operator is obtained from this equation by neglecting the higher-order terms in $\delta\psi$:
\begin{equation}
	\mathcal{H}_{\psi\psi}(\psi,\mu): X\times X \rightarrow Y: (\varphi_1,\varphi_2) \rightarrow 2\left(\overline{\psi}\varphi_1\varphi_2+\psi\overline{\varphi_1}\varphi_2+\psi\varphi_1\overline{\varphi_2}\right).
\end{equation}
Note that this operator is independent of the strength $\mu$ of the applied magnetic field. 
The full Hessian operator is defined by
\begin{align*}
	&\mathcal{H}(\psi,\mu): (X\times\mathbb{R})\times(X\times\mathbb{R})\rightarrow Y: \\
	&\hspace{20pt}\left(\begin{pmatrix}
		\varphi_1 \\
		\kappa_1
	\end{pmatrix}\begin{pmatrix}
		\varphi_2 \\
		\kappa_2
	\end{pmatrix}\right) \rightarrow \mathcal{H}_{\psi\psi}(\psi,\mu)\varphi_1\varphi_2 + \kappa_1\mathcal{H}_{\psi\mu}(\psi,\mu)\varphi_2 + \kappa_2\mathcal{H}_{\psi\mu}(\psi,\mu)\varphi_1 + \kappa_1\kappa_2\mathcal{H}_{\mu\mu}(\psi,\mu)
\end{align*}
and appears in the Lyapunov-Schmidt reduction-based method for constructing the tangent directions. Higher-order derivatives appear in this method as well. Using the same technique as before, one can show that the third partial derivative is given by
\begin{align}
	\frac{\partial^3\mathcal{GL}}{\partial\psi^3}(\psi,\mu):& X\times X \times X \rightarrow Y: \\
	&(\varphi_1,\varphi_2, \varphi_3) \rightarrow 2\left(\overline{\varphi_1}\varphi_2\varphi_3+\varphi_1\overline{\varphi_2}\varphi_3+\varphi_1\varphi_2\overline{\varphi_3}\right). \nonumber
\end{align}
This operator is independent of both the strength $\mu$ of the applied magnetic field and the order parameter $\psi$. These independencies yield
\begin{align*}
	&\forall k\geq 4: \frac{\partial^k\mathcal{GL}}{\partial\psi^k}(\psi,\mu) = 0,
	&& \forall k\geq 1: \frac{\partial^k}{\partial \mu^k}\frac{\partial^2\mathcal{GL}}{\partial\psi^2}(\psi,\mu) = 0.
\end{align*}
Other partial derivatives are of the form
\begin{align*}
	&\frac{\partial^k\mathcal{GL}}{\partial\mu^k}(\psi,\mu), 
	&& \frac{\partial^k\mathcal{J}}{\partial \mu^k}(\psi,\mu) && \text{for } k\geq 1.
\end{align*}
These operators will be approximated by applying a second-order finite difference scheme. See \cite{Fornberg1988} for more information on these schemes for general derivatives.

\subsection*{The discretized problem}
In practice the grid and equations need to be discretized for numerical continuation to be efficiently performed. Discretization of the problem was discussed in detail in \cite{Schlomer_square} and \cite{Schlomer_solver}. We will only provide the main results here.

Let $(x_1,y_1),\dots,(x_n,y_n)$ be a set of discretization points of the sample $\Omega$. If possible, this set should be chosen in such a way that any symmetries of $\Omega$ are preserved. States $\psi\in X$ will be approximated by $\psi^{(h)}\in\mathbb{C}^n$, with

$$
\psi^{(h)} = \begin{pmatrix}
\psi_1^{(h)} \\ \vdots \\ \psi_n^{(h)}
\end{pmatrix} = \begin{pmatrix}
\psi(x_1,y_1) \\ \vdots \\ \psi(x_n,y_n)
\end{pmatrix}.
$$

Let $\{T_i\}_{i=1}^m$ be the Delauny triangulation of $(x_1,y_1),\dots,(x_n,y_n)$ and $\{V_j\}_{j=1}^n$ its corresponding Voronoi tessellation. An edge $(x_j,y_j)-(x_k,y_k)$ of a triangle $T_i$ is denoted as $e_{j,k}$.
We will first discretize the operator $(-\i\nabla -\mathbf{A}(\mu))^2$, this discretization is given by $\mathcal{K}^{(h)}(\mu)$, defined by the property \cite{Schlomer_solver}

\begin{align}
	\forall \psi^{(h)},\phi^{(h)} \in\mathbb{C}^n: \sum_{i=1}^{n}|V_i| \overline{\phi_i^{(h)}}\left(K^{(h)}(\mu)\psi^{(h)}\right)_i = \sum_{i=1}^{m} \sum_{\text{edges } e_{j,k} \text{ of } T_i} & \alpha_{j,k}^{(i)}\Big(\big(\psi_j^{(h)}-U_{j,k}(\mu)\psi_k^{(h)}\big)\overline{\phi_j^{(h)}} \nonumber \\
	&+  \big(\psi_k^{(h)}-\overline{U_{j,k}(\mu)}\psi_j^{(h)}\big)\overline{\phi_k^{(h)}}\Big).
\end{align}

For a triangle $T_i$ consisting of the edges $e_{j,k}$, $e_{k,l}$ and $e_{l,j}$, the coefficient $\alpha_{j,k}^{(i)}\in\mathbb{R}$ is given by the formula

$$
\alpha_{j,k}^{(i)} = \frac{1}{2} \frac{t_{j,k}}{\sqrt{1-t_{j,k}^2}} \qquad \text{with } t_{j,k} = \bigg\langle \frac{e_{k,l}}{||e_{k,l}||_2},\frac{e_{l,j}}{||e_{l,j}||_2}\bigg\rangle_2.
$$

The coefficients $\alpha_{k,l}^{(i)}$ and $\alpha_{l,j}^{(i)}$ are defined by a similar formula. The values $U_{j,k}(\mu)\in\mathbb{C}$ are given by

$$
U_{j,k}(\mu) = \exp\left(-\i\int_{(x_k,y_k)}^{(x_j,y_j)}\langle e_{j,k} , \mathbf{A}(\mu,\omega)\rangle_2 d\omega\right)
$$

with $\mathbf{A}(\mu,\omega)$ the magnetic vector potential evaluated at the location $\omega\in\Omega$.
Using the discretization $K^{(h)}$ of $(-\i\nabla -\mathbf{A}(\mu))^2$, we discretize the Ginzburg-Landau equation \eqref{Ginzburg-Landau vergelijking} as \cite{Schlomer_solver}

\begin{align} 
	\begin{aligned} \label{discretized GL}
		&\mathcal{F}:\mathbb{C}^n\times\mathbb{R} \rightarrow \mathbb{C}^n, \\
		&0 = \mathcal{F}(\psi^{(h)},\mu) =
		\begin{cases}
			\left(K^{(h)}(\mu)\psi^{(h)}\right)_1 - \psi_1^{(h)}\left(1-|\psi_1^{(h)}|^2\right) = 0, \\
			\hspace{70pt}\vdots \\
			\left(K^{(h)}(\mu)\psi^{(h)}\right)_n - \psi_n^{(h)}\left(1-|\psi_n^{(h)}|^2\right) = 0.
		\end{cases}
	\end{aligned}
\end{align}

Other derivatives are discretized in a similar way. The discrete Jacobian is self-adjoint with respect to the inner product

\begin{equation}
\langle \cdot, \cdot \rangle_\mathbb{R}^{(h)} = \mathfrak{Re}\langle\cdot,\cdot\rangle_2
\end{equation}

which is the discrete version of \eqref{inproduct}. In the remainder of the article we will derive methods and algorithms for the continuous problem \eqref{Ginzburg-Landau vergelijking}. The extension of the results to the discrete case is straightforward.

\section{Numerical Continuation} \label{sectie numerieke continuatie}
\subsection*{Pseudo-arclength continuation}
To study the influence of system parameters like the magnetic field strength on the behaviour of patterns in superconductors, numerical continuation is performed. The most natural method for this purpose is considered to be pseudo-arclength continuation \cite{Beyn2002,Keller1986}. This algorithm is a predictor-corrector method: from a given solution ($\psi^{(0)},\mu^{(0)}$) a prediction ($\tilde{\psi}^{(1)},\tilde{\mu}^{(1)}$) for the next point of the solution branch is made, which is then corrected by a Newton-Krylov algorithm.
The prediction ($\tilde{\psi}^{(1)},\tilde{\mu}^{(1)}$) is made by perturbing the given solution in the direction of the tangent ($\dot{\psi}^{(0)},\dot{\mu}^{(0)}$) of the solution curve:
\begin{align*}
	&\tilde{\psi}^{(1)} = \psi^{(0)} + \Delta s \dot{\psi}^{(0)}, \\
	&\tilde{\mu}^{(1)} = \mu^{(0)} + \Delta s \dot{\mu}^{(0)},
\end{align*}
with $\Delta s\in\mathbb{R}$ a sufficiently small step size.
In practice an approximation to the tangent ($\dot{\psi}^{(0)},\dot{\mu}^{(0)}$) is used, which speeds up the algorithm.
After its construction the (approximated) tangent is orthogonalized towards $\begin{pmatrix}
\i\psi^{(0)} &0
\end{pmatrix}^T$, the null vector of the (full) Jacobian induced by the continuous $S^1$ symmetry. Without this orthogonalization numerical continuation might eventually fail due to solution branches falsely reverting. The correction step now consists of solving the system
\begin{equation} \label{Newton stelsel}
	\begin{cases}
		&\mathcal{GL}(\psi^{(1)},\mu^{(1)}) = 0, \\
		& P(\psi^{(1)},\mu^{(1)}) = \langle\dot{\psi}^{(0)},\psi^{(1)}-\tilde{\psi}^{(1)}\rangle_\mathbb{R} + \langle\dot{\mu}^{(0)},\mu^{(1)}-\tilde{\mu}^{(1)}\rangle_\mathbb{R} =0
	\end{cases}
\end{equation}
for the unknowns $\psi^{(1)}$ and $\mu^{(1)}$ by a Newton-Krylov algorithm. Note that perpendicularity with respect to the inner product \eqref{inproduct} is used.

\subsection*{A solver for the Jacobian system}
Application of a Newton-Krylov algorithm to \eqref{Newton stelsel} requires a solver for linear systems of the form
\begin{equation} \label{Jacobian system Newton}
	\begin{pmatrix}
		\mathcal{J}_\psi(\psi^{(1)},\mu^{(1)}) & \mathcal{J}_\mu(\psi^{(1)},\mu^{(1)}) \\
		\langle\dot{\psi}^{(0)},\cdot\rangle_\mathbb{R} & \langle\dot{\mu}^{(0)},\cdot\rangle_\mathbb{R}
	\end{pmatrix} \begin{pmatrix}
		\Delta \psi \\
		\Delta \mu
	\end{pmatrix} = \begin{pmatrix}
		-\mathcal{GL}(\psi^{(1)},\mu^{(1)}) \\
		-P(\psi^{(1)},\mu^{(1)})
	\end{pmatrix}.
\end{equation}
Since we want to trace solution branches for both physically stable and unstable solutions, the solver constructed in \cite{Schlomer_solver} will be used as a base for solving these linear systems. A summary of the construction and properties of the solver is given below, based on the analysis done in \cite{Schlomer_solver}. \\

In \cite{Schlomer_solver} the problem 
\begin{equation} \label{vergelijking opgelost in pynosh}
	\mathcal{J}_\psi(\psi,\mu)x = y \quad \text{ with } x\in X, y\in Y.
\end{equation}
is considered: a solver is constructed for linear systems with the partial Jacobian operator to $\psi$. Though this operator is singular in solutions of \eqref{Ginzburg-Landau vergelijking} and ill-conditioned close to such solutions, Krylov methods can be applied given that the right-hand side $y$ lies in the range of the operator $\mathcal{J}_\psi(\psi,\mu)$. The sparsity structure of $\mathcal{J}_\psi(\psi,\mu)$ promotes the use of a Krylov method as well, since only the application of an operator to given vectors needs to be known. Since the Jacobian $\mathcal{J}_\psi(\psi,\mu)$ is self-adjoint with respect to the inner product \eqref{inproduct}, but is generally indefinite, the Krylov method MINRES (adapted for the inner product \eqref{inproduct}) is chosen.

Preconditioning is also discussed in \cite{Schlomer_solver}: approximate inverses of the operator
\begin{displaymath}
	\mathcal{R}(\psi,\mu): X\rightarrow Y: \varphi \rightarrow (-\i\nabla - \mathbf{A})^2\varphi + 2|\psi|^2\varphi
\end{displaymath}
are used as a preconditioner. Approximate inversion is realized by an algebraic multigrid (AMG) strategy. Numerical evidence of \cite{Schlomer_solver} shows that the approximate inversion with a single V-cycle yields the fastest solver: in this case the number of Krylov iterations is independent of the dimension of the solution space. \\

An implementation of the complete solver is provided in Python (package PyNosh, see \cite{pynosh}), and forms an important part of our automatic exploration algorithm.
Though this solver is constructed for systems of the form \eqref{vergelijking opgelost in pynosh}, it will be combined with block elimination techniques for application to the bordered linear system \eqref{Jacobian system Newton}. For more information about these techniques, see e.g. \cite{Chan1986,Govaerts1991}.

\section{Detection and determination of bifurcation points} \label{sectie bifurcaties}
Bifurcation points of \eqref{Ginzburg-Landau vergelijking} are points $(\psi_b,\mu_b)$ for which $\mathcal{J}_\psi(\psi_b,\mu_b)$ has a zero eigenvalue \cite{Beyn2002,Keller1986} (we ignore zero eigenvalues induced by the $S^1$ symmetry in this definition). Both intersections of solution branches (branch points) and changes in solutions physical stability always correspond with a bifurcation, making their determination an essential part of our algorithm. In this section a condition that indicates the proximity of bifurcations is described first, then a non-linear system of equations is constructed to determine the points.

\subsection*{Detection}
We use a method based on the analysis done in \cite{Mei2000} to detect bifurcation points: The lowest magnitude Ritz values of $\mathcal{J}_\psi(\psi,\mu)$ are calculated after each numerical continuation step. When a point $(\psi,\mu)$ is close to a bifurcation, one of these Ritz values will approximate zero. Furthermore, if the eigenvalue associated with the bifurcation changes its sign after passing this point, the same happens with the corresponding Ritz value. These properties give rise to the following definition:

\begin{definition} \textbf{\textup{Near bifurcation condition}} \\ \label{def - nearbif}
	Let $\zeta_1^{(0)},\zeta_2^{(0)},\dots, \zeta_k^{(0)}$ and $\zeta_1^{(1)},\zeta_2^{(1)},\dots, \zeta_k^{(1)}$ be the $k$ lowest magnitude Ritz values of the partial Jacobian operator in respectively $(\psi^{(0)},\mu^{(0)})$ and $(\psi^{(1)},\mu^{(1)})$, two consecutive points of the solution branch calculated by pseudo-arclength continuation. Assume the Ritz values are ordered according to the same eigenvectors. A bifurcation point is close to $(\psi^{(1)},\mu^{(1)})$ if either
	\begin{align*}
		&\exists j\in\{1,\dots, k\}: |\zeta_j^{(1)}|< \epsilon_1 \text{ for a certain treshold } \epsilon_1\in\mathbb{R}^+_0,\epsilon_1\ll 1, \\
		&\exists j\in\{1,\dots, k\}: \begin{cases}
			|\zeta_j^{(1)}|< \epsilon_2 \text{ for a certain treshold } \epsilon_2\in\mathbb{R}^+_0,\epsilon_1\ll \epsilon_2\ll 1, \\
			\zeta_j^{(0)}\zeta_j^{(1)}<0.
		\end{cases}
	\end{align*}
	In this case we say that the point $(\psi^{(1)},\mu^{(1)})$ satisfies the near bifurcation condition.
\end{definition}

For the calculation of the examples in section \ref{sectie resulaten} we used $k=5$ Ritz values. Note that values induced by the $S^1$ symmetry are ignored in definition \ref{def - nearbif}.

Except for the detection process, Ritz values are also used when determining physical stability: if the Jacobian in a point has a (high) positive Ritz value, this point is assumed to be unstable. Another use of Ritz values and Ritz vectors is deflation: multiplying the system with a certain projection (chosen to eliminate eigenvectors corresponding to low magnitude eigenvalues) accelerates Krylov algorithms \cite{Gaul2011,Gaul2013,Gaul2015}.

\subsection*{Determination}
Though the near bifurcation condition described in the previous section gives us an indication of the proximity of bifurcations, we still need to determine these points more precisely. For this purpose we construct a non-linear system of equations that's only satisfied for bifurcation points $(\psi_b,\mu_b)$ of \eqref{Ginzburg-Landau vergelijking}, this system is based on the property
\begin{align*}
	&(\psi_b,\mu_b) \text{ is a bifurcation point} \iff \exists \phi\in X \text{ } (\phi \neq \i\psi_b, \phi \neq 0): \mathcal{J}_\psi(\psi_b,\mu_b)\phi = 0.
\end{align*}
The system we consider is given by 
\begin{equation} \label{bifurcatiepunt stelsel}
	\begin{cases}
		\mathcal{GL}(\psi_b,\mu_b) = 0, \\
		\mathcal{J}_\psi(\psi_b,\mu_b)\phi = 0, \\
		\langle\i\psi_b,\phi\rangle_\mathbb{R} = 0,
	\end{cases}
\end{equation}
and is similar to the one described in \cite{Mei2000}. When we find a point that satisfies the near bifurcation condition, we use it as an initial guess to solve \eqref{bifurcatiepunt stelsel} for $(\psi_b,\mu_b,\phi)$ by a Newton-Krylov algorithm. The approximate eigenvector corresponding to the near-zero Ritz value is used as an initial guess for the null vector $\phi$. Note that the guess for $\phi$ should be normalized after each Newton step, to prevent the solver from converging to the trivial solution $\phi=0$.
In each Newton step the Jacobian system of \eqref{bifurcatiepunt stelsel} has to be solved, this system is given by
\begin{equation} \label{Jacobian system bifurcatiepunt}
	\begin{pmatrix}
		\mathcal{J}_\psi(\psi_b,\mu_b) & 0 & \mathcal{J}_\mu(\psi_b,\mu_b) \\
		\mathcal{H}_{\psi\psi}(\psi_b,\mu_b)\phi & \mathcal{J}_\psi(\psi_b,\mu_b) & \mathcal{H}_{\psi\mu}(\psi_b,\mu_b)\phi \\
		\langle\i\phi,\cdot\rangle_\mathbb{R} & \langle\i\psi_b,\cdot\rangle_\mathbb{R} & 0
	\end{pmatrix} \begin{pmatrix}
		\Delta \psi \\
		\Delta \phi \\
		\Delta \mu
	\end{pmatrix} = \begin{pmatrix}
		-\mathcal{GL}(\psi_b,\mu_b) \\
		-\mathcal{J}_\psi(\psi_b,\mu_b)\phi \\
		-\langle\i\psi_b,\phi\rangle_\mathbb{R}
	\end{pmatrix}.
\end{equation}
Together with the application of block elimination techniques \cite{Chan1986,Govaerts1991} and deflation \cite{Gaul2011,Gaul2013,Gaul2015}, this equation is solved by employing the solver described in section \ref{sectie numerieke continuatie}. Note that both (partial) Jacobians and Hessians of \eqref{Ginzburg-Landau vergelijking} appear in \eqref{Jacobian system bifurcatiepunt}. As was shown in section \ref{sectie Ginzburg-Landau systeem}, these expressions are either known, or are approximated by a finite difference scheme.

\section{Construction of new tangent directions} \label{sectie tangents}
In order to calculate new solution curves that emerge from a branch point, it is sufficient to determine the tangent directions to these curves. Given these directions, the curves are calculated by employing the pseudo-arclength continuation algorithm (see section \ref{sectie numerieke continuatie}). We will derive two algorithms for the construction of the tangent directions, one based on Lyapunov-Schmidt reduction, another based on the equivariant branching lemma. In the remainder of this section ($\psi_b,\mu_b$) is assumed to be a bifurcation point of \eqref{Ginzburg-Landau vergelijking} and we denote $\mathcal{GL}^{(b)}$, $\mathcal{J}^{(b)}$, \dots as the evaluations of $\mathcal{GL}$, $\mathcal{J}$, \dots at ($\psi_b,\mu_b$).

\subsection*{Lyapunov-Schmidt reduction}

A popular approach for analysing bifurcations is the Lyapunov-Schmidt method: in a neighbourhood of the bifurcation ($\psi_b,\mu_b$), the problem is reduced to low dimensional systems of algebraic equations that contain all of the information of the bifurcations behaviour. The solutions of these systems are then used to construct the tangent directions. In \cite{Mei1996} the method is applied to general problems, leading to three equations that form the base of our first tangent construction algorithm. A short overview of the deduction of these equations (applied to the Ginzburg-Landau problem) is given below, based on the analysis done in \cite{Mei1996}. \\

The following assumptions are made:
\begin{itemize}
	\item[-] $\mathcal{J}_\psi^{(b)}$ is a Fredholm operator of index $0$ and zero is a semi-simple eigenvalue of it.
	\item[-] Orthonormal bases for the kernels of $\mathcal{J}_\psi^{(b)}$ and $\mathcal{J}_\psi^{(b)^*}$ (the adjoint Jacobian with respect to \eqref{inproduct}, see \cite{Schlomer_square} for more details) have been determined:
	\begin{align*}
		&\ker\left(\mathcal{J}_\psi^{(b)}\right) = \text{span}\left(\phi_1,\phi_2,\dots,\phi_n\right), \\
		&\ker\left(\mathcal{J}_\psi^{(b)^*}\right) = \text{span}\left(\phi_1^*,\phi_2^*, \dots, \phi_n^*\right)
	\end{align*}
	for a certain $n\in\mathbb{N}$. Note that the trivial null vector $\i\psi_b$ induced by the $S^1$ symmetry is ignored in this analysis and does not count towards the dimension of these kernels. In the case of the Ginzburg-Landau equation the Jacobian $\mathcal{J}_\psi^{(b)}$ is self-adjoint, so we actually have $\ker\left(\mathcal{J}_\psi^{(b)}\right) = \ker\left(\mathcal{J}_\psi^{(b)^*}\right)$.
	\item[-] We have
	\begin{displaymath}
		\mathcal{J}_\mu^{(b)} \in \text{im}\left(\mathcal{J}_\psi^{(b)}\right).
	\end{displaymath}
	Note that if this assumption does not hold and $n$ equals $1$, the bifurcation $(\psi_b,\mu_b)$ is in fact a turning point and no new tangent directions need to be calculated.
\end{itemize}
In practice, the kernels of $\mathcal{J}_\psi^{(b)}$ and $\mathcal{J}_\psi^{(b)^*}$ are approximated using numerical methods (e.g. an inverse iteration algorithm \cite{Demmel1997}).

A solution curve ($\psi(s),\mu(s)$) with $(\psi(0),\mu(0))=(\psi_b,\mu_b)$ is considered, where $s$ is the parametrization parameter. The fundamental theorem of linear algebra states that the space $X$ can be decomposed into $\ker\left(\mathcal{J}_\psi^{(b)}\right)$ and $\text{im}\left(\mathcal{J}_\psi^{(b)^*}\right)$, this implies that we can write $\psi(s)$ and $\mu(s)$ as
\begin{align*}
	&\psi(s) = \psi_b + \sum_{i=1}^{n}s\alpha_i\phi_i +sw(\alpha_1,\dots,\alpha_n,s) &&\text{with } w\in\text{im}\left(\mathcal{J}_\psi^{(b)^*}\right), \\
	&\mu(s) = \mu_b + s\beta(s) &&\text{with } \beta\in\mathbb{R}.
\end{align*}
Applying a Taylor expansion, this yields
\begin{align}
	&\psi(s) = \psi_b + \sum_{i=1}^{n}s\alpha_i\phi_i + \sum_{k=1}^{l}s^kw_k + \mathcal{O}(s^{l+1}), \nonumber \\
	&\mu(s) = \mu_b + \sum_{k=1}^{l}s^k\beta_k + \mathcal{O}(s^{l+1}) \label{Taylor expansie psi en mu} \\
	&\text{with } l\in\mathbb{N} \text{, } w_1,\dots, w_l\in\text{im}\left(\mathcal{J}_\psi^{(b)^*}\right) \text{ and } \beta_1,\dots,\beta_{l}\in\mathbb{R}. \nonumber
\end{align}
Note that the tangent ($\dot{\psi}(0),\dot{\mu}(0)$) to the solution curve is given by
\begin{align}
	&\dot{\psi}(0) = \lim\limits_{s\rightarrow0}\frac{\psi(s)-\psi_b}{s} = \sum_{i=1}^{n}\alpha_i\phi_i+w_1 \label{vorm tangents}, \\
	&\dot{\mu}(0) = \lim\limits_{s\rightarrow0}\frac{\mu(s)-\mu_b}{s} = \beta_1. \nonumber
\end{align}
Equation \eqref{Ginzburg-Landau vergelijking} is rewritten (using again a Taylor expansion),
\begin{align}
	\mathcal{GL}(\psi,\mu) =& \mathcal{J}^{(b)}\begin{pmatrix}
		\psi-\psi_b \\
		\mu-\mu_b
	\end{pmatrix} + R(\psi-\psi_b,\mu-\mu_b) \nonumber \\
	=&\sum_{k=1}^{l} s^k\left(\mathcal{J}_\psi^{(b)}w_k+\mathcal{J}_\mu^{(b)}\beta_k+r_k\right) + \mathcal{O}(s^{l+1}), \label{Taylorexpansie GL}
\end{align}
with $R(\psi-\psi_b,\mu-\mu_b)$ given by
\begin{align*}
	&R(\psi-\psi_b,\mu-\mu_b) \\
	&\hspace{10pt}= \sum_{m=2}^{\infty} \frac{1}{m!} D^m\mathcal{GL}^{(b)}
	\begin{pmatrix}
		\sum\limits_{i=1}^{n}s\alpha_i\phi_i+\sum\limits_{k=1}^{l}s^kw_k+\mathcal{O}(s^{l+1}) \\
		\sum\limits_{k=1}^{l}s^k\beta_k+\mathcal{O}(s^{l+1})
	\end{pmatrix}^m.
\end{align*}
The term $r_k$  in \eqref{Taylorexpansie GL} represents the coefficient of $s^k$ in $R(\psi-\psi_b,\mu-\mu_b)$, this is explicitly given by
\begin{align}
	&r_k = \sum_{j=2}^{k}\frac{1}{j!}\hspace{20pt}\sum_{\mathclap{\substack{\sum_{p=1}^{j}k_p=k \\ \forall p=1,\dots, j: \\ k_p\in\{1,\dots, k-j+1\}}}}\hspace{10pt}D^j\mathcal{GL}^{(b)}
	\begin{pmatrix}
		x_{k_1} \\
		\beta_{k_1}
	\end{pmatrix}\begin{pmatrix}
		x_{k_2} \\
		\beta_{k_2}
	\end{pmatrix}\dots\begin{pmatrix}
		x_{k_j} \\
		\beta_{k_j}
	\end{pmatrix} \label{exact expression rk} \\
	&\text{with } x_1=\sum_{i=1}^{n}\alpha_i\phi_i+w_1 \qquad \forall p=2,\dots,k-1: x_p=w_p. \nonumber
\end{align}

\noindent The following three equations are now derived in \cite{Mei1996} (for $k=1,\dots, l$):
\begin{align}
	&\mathcal{J}_\psi^{(b)}w_k+\mathcal{J}_\mu^{(b)}\beta_k = -r_k, \label{vergelijkingen van paper Mei 1} \\
	&\forall j=1,\dots, n: \langle\phi_j,w_k\rangle_\mathbb{R}=0, \label{vergelijkingen van paper Mei 2} \\
	&\forall j=1,\dots, n: \langle\phi_j^*,r_k\rangle_\mathbb{R}=0, \label{vergelijkingen van paper Mei 3}
\end{align}
these equations will form the basis of the upcoming algorithms.
For more details on Lyapunov-Schmidt reduction, see e.g. \cite{Mei2000,Mei1996}.

\subsection*{An algorithm for the construction of the tangent directions}
In \cite{Mei1996} a general algorithm is deduced to construct reduced systems of equations for $\alpha_1,\dots,\alpha_n, \beta_{k-1}$. Together with \eqref{vorm tangents} the solutions of these systems yield the tangent directions. For this method equations \eqref{vergelijkingen van paper Mei 1}, \eqref{vergelijkingen van paper Mei 2} and \eqref{vergelijkingen van paper Mei 3} are used to write the terms $r_k$ and $w_k$ as polynomials in the variables $\alpha_1,\dots,\alpha_n,\beta_1,\dots,\beta_{k-1}$. Under the assumptions
\begin{align}
	&t_1 = \frac{\partial}{\partial\psi}\mathcal{J}^{(b)}
	\begin{pmatrix}
		v^{(0)} \\
		1
	\end{pmatrix}
	\phi_1\notin\text{im}\left(\mathcal{J}_\psi^{(b)}\right), \label{extra aanname in LS reductie 1} \\
	&t_2 = \frac{\partial}{\partial\psi}\mathcal{J}^{(b)}
	\begin{pmatrix}
		v^{(0)} \\
		1
	\end{pmatrix}
	\phi_2\notin\text{im}\left(\mathcal{J}_\psi^{(b)}\right), \label{extra aanname in LS reductie 1.5} \\
	&\langle t_1,\phi_1^*\rangle_\mathbb{R}
	\langle t_2,\phi_2^*\rangle_\mathbb{R} \neq \langle t_1,\phi_2^*\rangle_\mathbb{R}
	\langle t_2,\phi_1^*\rangle_\mathbb{R}, \label{extra aanname in LS reductie 1.8} \\
	&t_3 = \mathcal{H}^{(b)}\begin{pmatrix}
		v^{(0)} \\
		1
	\end{pmatrix}\begin{pmatrix}
		v^{(0)} \\
		1
	\end{pmatrix}\in\text{im}\left(\mathcal{J}_\psi^{(b)}\right), \label{extra aanname in LS reductie 2}
\end{align}
it is possible to write the term $\beta_k$ as a polynomial in the variables $\alpha_1,\dots,\alpha_n$ as well. The vector $v^{(0)}$ is implicitly defined via $w_1=\beta_1 v^{(0)}$. The method described in \cite{Mei1996} is given in algorithm \ref{algorithm Mei} (rewritten to emphasize the construction of the reduced systems).

\begin{textalgorithm} \textbf{\textup{Creation of tangent directions based on \cite{Mei1996}}} \\ \label{algorithm Mei}
	\textbf{Initial:} \\
	For $k=1$, use the equation
	\begin{displaymath}
		\mathcal{J}_\psi^{(b)}w_1 = -\mathcal{J}_\mu^{(b)}\beta_1
	\end{displaymath}
	to write $w_1\in\text{im}\left(\mathcal{J}_\psi^{(b)^*}\right)$ as a polynomial in $\beta_1$. \\
	\textbf{Iteration:} \\
	For $k=2,3,\dots$ do: \\
	\textnormal{Step 1:} Use the polynomial expression of $w_{k-1}$ (and $\beta_{k-2}$ if $k\geq 3$) together with \eqref{exact expression rk} to write $r_k$ as a polynomial in $\alpha_1,\dots,\alpha_n,\beta_{k-1}$. \\
	\textnormal{Step 2:} Substitute this polynomial expression in the equation
	\begin{displaymath}
		\forall j=1,\dots, n: \langle\phi_j^*,r_k\rangle_\mathbb{R}=0.
	\end{displaymath}
	This yields a reduced system of equations for $\alpha_1,\dots,\alpha_n,\beta_{k-1}$. \\
	\textnormal{Step 3:} Check whether the system has real and isolated solutions $\alpha_1(\beta_{k-1}),\dots, \alpha_n(\beta_{k-1})$ for $\beta_{k-1}$ in an open interval. Solve the system for these solutions, each one corresponds to a different tangent direction. \\
	\textnormal{Step 4a:} If only isolated solutions exist in the reduced system, stop the algorithm. \\
	\textnormal{Step 4b:} If non-isolated solutions exist as well, use the system to write $\beta_{k-1}$ as a polynomial of $\alpha_1,\dots, \alpha_n$ and use this result to write $r_k$ as a polynomial in these variables as well. Substitute this expression for $r_k$ into the system
	\begin{align*}
		&\mathcal{J}_\psi^{(b)}w_k+\mathcal{J}_\mu^{(b)}\beta_k = -r_k, \\
		&\forall j=1,\dots, n: \langle\phi_j,w_k\rangle_\mathbb{R}=0
	\end{align*}
	and solve it to find a polynomial expression for $w_k$ in $\alpha_1,\dots,\alpha_n,\beta_k$.
\end{textalgorithm}
The complexity of the polynomial expressions in algorithm \ref{algorithm Mei} strongly depends on the dimension $n$ of $\ker\left(\mathcal{J}_\psi^{(b)}\right)$. If this kernel is one-dimensional, the algorithm only requires a single iteration: the first reduced system that is constructed does not contain non-isolated solutions. In this case the algorithm is actually equivalent to the algebraic branching equation \cite{Beyn2002,Keller1986}, the reduced system for $\alpha_1$ and $\beta_1$ is given by
\begin{align}
	&a\alpha_1^2 + b\alpha_1\beta_1+c\beta_1^2 = 0 \label{Algebraic branching equation} \\
	&\text{with } a = \langle\phi_1^*,\frac{1}{2}\mathcal{H}_{\psi\psi}^{(b)}\phi_1\phi_1\rangle_\mathbb{R},  \nonumber \\
	&\hspace{22pt}b = \langle\phi_1^*,\mathcal{H}_{\psi\psi}^{(b)}\phi_1v^{(0)}+\mathcal{H}_{\psi\mu}^{(b)}\phi_1\rangle_\mathbb{R}, \nonumber \\
	&\hspace{22pt}c = \langle\phi_1^*,\frac{1}{2}\mathcal{H}_{\psi\psi}^{(b)}v^{(0)}v^{(0)}+\mathcal{H}_{\psi\mu}^{(b)}v^{(0)} + \frac{1}{2}\mathcal{H}_{\mu\mu}^{(b)}\rangle_\mathbb{R}, \nonumber
\end{align}
where $v^{(0)}$ is solved from the equation
\begin{displaymath}
	\mathcal{J}_\psi^{(b)}v^{(0)} = -\mathcal{J}_\mu^{(b)} \qquad v^{(0)}\in\text{im}\left(\mathcal{J}_\psi^{(b)^*}\right).
\end{displaymath}
Equation \eqref{Algebraic branching equation} is solved for $\alpha_1$ and $\beta_1$, together with $w_1=\beta_1v^{(0)}$ the tangent directions are constructed by applying \eqref{vorm tangents}. The directions are determined up to multiplication by a constant, and should be normalized before further use.

For problems that exhibit symmetry, bifurcations arise with Jacobian kernel dimension $n\geq 2$ and the polynomials in \ref{algorithm Mei} become more complex. In \cite{Mei1996} no exact expressions for these polynomials were derived and hence only implicit reduced systems of equations are given, which limits the use of the algorithm in its current form by numerical software. Furthermore, it is not specified how these systems can be checked for real and isolated solutions. In the remainder of this section we derive exact polynomial expressions that yield explicit systems for the case $n=2$. A simple rule is used to check these for real and isolated solutions. No other cases than $n=1$ and $n=2$ were encountered for the Ginzburg-Landau equation on two-dimensional samples.
Set $n=2$, assumptions \eqref{extra aanname in LS reductie 1},\eqref{extra aanname in LS reductie 1.5} and \eqref{extra aanname in LS reductie 1.8} allow us to choose $\phi_1^*$ and $\phi_2^*$ such that:
\begin{align*}
	&\frac{\partial}{\partial\psi}\mathcal{J}^{(b)}\begin{pmatrix}
		v^{(0)} \\
		1
	\end{pmatrix}\phi_1 = q_1\phi_1^*+u_1 &&\text{with } u_1\in\text{im}\left(\mathcal{J}_\psi^{(b)}\right),q_1\in\mathbb{R}_0, \\
	&\frac{\partial}{\partial\psi}\mathcal{J}^{(b)}\begin{pmatrix}
		v^{(0)} \\
		1
	\end{pmatrix}\phi_2 = q_2\phi_1^*+q_3\phi_2^*+u_2 &&\text{with } u_2\in\text{im}\left(\mathcal{J}_\psi^{(b)}\right),q_2\in\mathbb{R},q_3\in\mathbb{R}_0.
\end{align*}

A modification of algorithm \ref{algorithm Mei} for the case $n=2$ is given by algorithm \ref{algoritme n=2}. In this last algorithm the coefficients necessary to write the different polynomial expressions are derived in order to determine explicit reduced systems of equations for $\alpha_1,\dots,\alpha_n,\beta_{k-1}$. A thorough derivation of algorithm \ref{algoritme n=2} is provided in appendix \ref{app proof}.
\begin{textalgorithm} \textbf{\textup{Creation of tangent directions for the $n=2$ case}} \\ \label{algoritme n=2}
	\textbf{Initial:} \\
	\textnormal{Step 1:} Solve the equation
	\begin{displaymath}
		\mathcal{J}_\psi^{(b)}v^{(0)} = -\mathcal{J}_\mu^{(b)} \qquad v^{(0)}\in\text{im}\left(\mathcal{J}_\psi^{(b)^*}\right) \subset X
	\end{displaymath}
	by applying the solver described in section \ref{sectie numerieke continuatie}. Note that $w_1$ (defined in \eqref{Taylor expansie psi en mu}) can be written as
	\begin{equation} \label{polynoom w1}
		w_1=\beta_1v^{(0)}.
	\end{equation}
	\textnormal{Step 2:} Calculate the following terms in the space $Y$:
	\begin{align*}
		&y_0^{(2)}=\frac{1}{2}\mathcal{H}_{\psi\psi}^{(b)}\phi_2\phi_2, \qquad y_1^{(2)}=\mathcal{H}_{\psi\psi}^{(b)}\phi_1\phi_2, \qquad y_2^{(2)}=\frac{1}{2}\mathcal{H}_{\psi\psi}^{(b)}\phi_1\phi_1, \\
		&t_1 =\mathcal{H}_{\psi\psi}^{(b)}\phi_1v^{(0)}+\mathcal{H}_{\psi\mu}^{(b)}\phi_1, \qquad t_2 =\mathcal{H}_{\psi\psi}^{(b)}\phi_2v^{(0)}+\mathcal{H}_{\psi\mu}^{(b)}\phi_2, \\
		&t_3 = \frac{1}{2}\mathcal{H}_{\psi\psi}^{(b)}v^{(0)}v^{(0)}+\mathcal{H}_{\psi\mu}^{(b)}v^{(0)} + \frac{1}{2}\mathcal{H}_{\mu\mu}^{(b)}.
	\end{align*}
	Note that $r_2$ (defined in \eqref{exact expression rk}) can be written as
	\begin{equation}
		r_2 = \sum_{i=0}^{2}\alpha_1^i\alpha_2^{2-i}y_i^{(2)} + \beta_1\alpha_1 t_1 + \beta_1\alpha_2 t_2 + \beta_1^2t_3.
	\end{equation}
	\textnormal{Step 3:} Calculate the scalars
	\begin{align*}
		&\forall l=1;2, i=0;1;2: a_i^{(2,l)} = \langle\phi_l^*,y_i^{(2)}\rangle_\mathbb{R}, \\
		&b^{(1)} = \langle\phi_1^*,t_1\rangle_\mathbb{R}, \qquad b^{(2)} = \langle\phi_2^*,t_2\rangle_\mathbb{R}, \qquad b^{(3)} = \langle\phi_1^*,t_2\rangle_\mathbb{R}.
	\end{align*}
	These values form the coefficients of the following reduced system of equations:
	\begin{equation} \label{reduced equation k=2}
		\begin{cases}
			&\sum\limits_{i=0}^{2}\alpha_1^i\alpha_2^{2-i}a_i^{(2,1)}+\beta_1\alpha_1b^{(1)}+\beta_1\alpha_2 b^{(3)}=0, \\
			&\sum\limits_{i=0}^{2}\alpha_1^i\alpha_2^{2-i}a_i^{(2,2)}+\beta_1\alpha_2b^{(2)}=0.
		\end{cases}
	\end{equation}
	\textnormal{Step 4:} Check whether $b^{(2)}a_0^{(2,1)}=b^{(3)}a_0^{(2,2)}$, $a_2^{(2,2)}=0$ and $\forall i=1;2:b^{(2)}a_i^{(2,1)}=b^{(1)}a_{i-1}^{(2,2)}+b^{(3)}a_{i}^{(2,2)}$. \\
	\textnormal{Step 5a:} If this is the case, stop the algorithm: system \eqref{reduced equation k=2} only contains isolated solutions, these correspond to the tangent directions. \\
	\textnormal{Step 5b:} If this is not the case, the only isolated solution of \eqref{reduced equation k=2} is $(\alpha_1,\alpha_2,\beta_1)=(0,0,a)$ with $a\in\mathbb{R}$. The system contains non-isolated solutions as well. Set $\kappa^{(1)}_{-1}=\kappa^{(1)}_{2}=0$ and calculate
	\begin{align*}
		&\forall i=0;1: \kappa_i^{(1)} = -\frac{a_i^{(2,2)}}{b^{(2)}}, \\
		&z_0^{(2)} = y_0^{(2)} + \kappa_0^{(1)}t_2 + \kappa_0^{(1)^2}t_3 \qquad z_1^{(2)} = y_1^{(2)} + \kappa_0^{(1)}t_1 + \kappa_1^{(1)}t_2 + 2\kappa_0^{(1)}\kappa_1^{(1)}t_3, \\
		&z_2^{(2)} = y_2^{(2)} + \kappa_1^{(1)}t_1 + \kappa_1^{(1)^2}t_3, \\
		&q_0^{(1)} = \kappa_0^{(1)}v^{(0)}+\phi_2, \qquad q_1^{(1)} = \kappa_1^{(1)}v^{(0)}+\phi_1.
	\end{align*}
	Note that $\beta_1$, $r_2$ and $x_1$ (defined in \eqref{Taylor expansie psi en mu} and \eqref{exact expression rk}) can respectively be written as
	\begin{align} 
		&\beta_1 = \sum_{i=0}^{1}\alpha_1^i\alpha_2^{1-i}\kappa_i^{(1)}, \label{polynoom beta1} \\
		&r_2 = \sum_{i=0}^{2}\alpha_1^i\alpha_2^{2-i}z_i^{(2)}, \\
		&x_1 = \sum_{i=0}^{1}\alpha_1^i\alpha_2^{1-i}q_i^{(1)}.
	\end{align}
	\textbf{Iteration:} \\
	For $k=3,4,\dots$ do: \\
	\textnormal{Step 1:} Solve the equations
	\begin{displaymath}
		\forall i=0,\dots, k-1: \mathcal{J}_\psi^{(b)}v_i^{(k-1)} = -z_i^{(k-1)} \qquad v^{(k-1)}_i\in\text{im}\left(\mathcal{J}_\psi^{(b)^*}\right) \subset X
	\end{displaymath}
	by applying the solver described in section \ref{sectie numerieke continuatie}. Note that $w_{k-1}$ (defined in \eqref{Taylor expansie psi en mu}) can be written as
	\begin{equation}
		w_{k-1} = \sum_{i=0}^{k-1}\alpha_1^i\alpha_2^{k-1-i}v_i^{(k-1)}+\beta_{k-1}v^{(0)}.
	\end{equation}
	\textnormal{Step 2:} Calculate the following terms in the space $Y$:
	\begin{align*}
		&\forall i=0,\dots, k: y_i^{(k)} = \sum_{\mathclap{\substack{i_1+i_2=i \\ i_1\in\{0;1\} \\ i_2\in\{0,\dots, k-1\}}}} \mathcal{H}^{(b)}
		\begin{pmatrix}
			q^{(1)}_{i_1} \\
			\kappa^{(1)}_{i_1}
		\end{pmatrix}\begin{pmatrix}
			v^{(k-1)}_{i_2} \\
			0
		\end{pmatrix} +\frac{1}{2} \hspace{20pt} \sum_{\mathclap{\substack{k_1+k_2=k \\ k_1,k_2\in\{2,\dots, k-2\}}}} \hspace{40pt} \sum_{\mathclap{\substack{i_1+i_2=i \\ i_1\in\{0,\dots, k_1\} \\ i_2\in\{0,\dots, k_2\}}}} \mathcal{H}^{(b)}
		\begin{pmatrix}
			q^{(k_1)}_{i_1} \\
			\kappa^{(k_1)}_{i_1}
		\end{pmatrix}\begin{pmatrix}
			q^{(k_2)}_{i_2} \\
			\kappa^{(k_2)}_{i_2}
		\end{pmatrix} \\
		&\hspace{90pt}+\sum_{j=3}^{k}\frac{1}{j!} \hspace{20pt} \sum_{\mathclap{\substack{\sum_{p=1}^{j}k_p=k  \\ \forall p=1,\dots, j: \\ k_p\in\{1,\dots, k-j+1\}}}} \hspace{40pt} \sum_{\mathclap{\substack{\sum_{p=1}^{j}i_p=i  \\ \forall p=1\dots j: \\ i_p\in\{0,\dots, k_p\}}}} D^j\mathcal{GL}^{(b)}
		\begin{pmatrix}
			q^{(k_1)}_{i_1} \\
			\kappa^{(k_1)}_{i_1}
		\end{pmatrix}\begin{pmatrix}
			q^{(k_2)}_{i_2} \\
			\kappa^{(k_2)}_{i_2}
		\end{pmatrix}\dots\begin{pmatrix}
			q^{(k_j)}_{i_j} \\
			\kappa^{(k_j)}_{i_j}
		\end{pmatrix}.
	\end{align*}
	An efficient method to calculate these terms is provided in appendix \ref{app calc}. Note that $r_{k}$ (defined in \eqref{exact expression rk}) can be written as
	\begin{equation}
		r_k = \sum_{i=0}^{k}\alpha_1^i\alpha_2^{k-i}y_i^{(k)} + \beta_{k-1}\alpha_1\left(t_1+2\kappa_1^{(1)}t_3\right) + \beta_{k-1}\alpha_2\left(t_2+2\kappa_0^{(1)}t_3\right).
	\end{equation}
	\textnormal{Step 3:} Calculate the scalars
	\begin{displaymath}
		\forall l=1;2, i=0,\dots, k: a_i^{(k,l)} = \langle\phi_l^*,y_i^{(k)}\rangle_\mathbb{R}.
	\end{displaymath}
	These values form the coefficients of the following reduced system of equations:
	\begin{equation} \label{reduced equation k=2 b}
		\begin{cases}
			&\sum\limits_{i=0}^{k}\alpha_1^i\alpha_2^{k-i}a_i^{(k,1)}+\beta_{k-1}\alpha_1b^{(1)}+\beta_{k-1}\alpha_2b^{(3)}=0, \\
			&\sum\limits_{i=0}^{k}\alpha_1^i\alpha_2^{k-i}a_i^{(k,2)}+\beta_{k-1}\alpha_2b^{(2)}=0.
		\end{cases}
	\end{equation}
	\textnormal{Step 4:} Check whether $b^{(2)}a_0^{(k,1)}=b^{(3)}a_0^{(k,2)}$, $a_k^{(k,2)}=0$ and $\forall i=1,\dots, k:b^{(2)}a_i^{(k,1)}=b^{(1)}a_{i-1}^{(k,2)}+b^{(3)}a_{i}^{(k,2)}$. \\
	\textnormal{Step 5a:} If this is the case, stop the algorithm: system \eqref{reduced equation k=2 b} only contains isolated solutions, these correspond to the tangent directions. \\
	\textnormal{Step 5b:} If this is not the case, the only isolated solution of \eqref{reduced equation k=2 b} is $(\alpha_1,\alpha_2,\beta_{k-1})=(0,0,a)$ with $a\in\mathbb{R}$. The system contains non-isolated solutions as well. Set $\kappa^{(k-1)}_{-1}=\kappa^{(k-1)}_{k}=0$ and calculate
	\begin{align*}
		&\forall i=0,\dots, k-1: \kappa_i^{(k-1)} = -\frac{a_i^{(k,2)}}{b^{(2)}}, \\
		&\forall i=0,\dots, k: z_i^{(k)} = y_i^{(k)} + \kappa_{i-1}^{(k-1)}\left(t_1+2\kappa_1^{(1)}t_3\right) + \kappa_{i}^{(k-1)}\left(t_2+2\kappa_0^{(1)}t_3\right), \\
		&\forall i=0,\dots, k-1: q_i^{(k-1)}=v_i^{(k-1)}+\kappa_i^{(k-1)}v^{(0)}.
	\end{align*}
	Note that $\beta_{k-1}$, $r_k$ and $x_{k-1}$ (defined in \eqref{Taylor expansie psi en mu} and \eqref{exact expression rk}) can respectively be written as
	\begin{align} 
		&\beta_{k-1} = \sum_{i=0}^{k-1}\alpha_1^i\alpha_2^{k-1-i}\kappa_i^{(k-1)}, \\
		&r_k = \sum_{i=0}^{k}\alpha_1^i\alpha_2^{k-i}z_i^{(k)}, \\
		&\beta_{k-1} = \sum_{i=0}^{k-1}\alpha_1^i\alpha_2^{k-1-i}q_i^{(k-1)}.
	\end{align}
\end{textalgorithm}

The initial steps of the algorithm always yield a system with isolated solution $(\alpha_1,\alpha_2,\beta_{1})=(0,0,a)$ ($a\in\mathbb{R}$). Application of \eqref{vorm tangents} and \eqref{polynoom w1} leads to a first tangent direction. If this system contains other isolated solutions as well, the corresponding tangent directions are constructed with the same formulas. In the other case the algorithm is continued, possibly yielding further systems with a sole isolated solution $(\alpha_1,\alpha_2,\beta_{k-1})=(0,0,a)$ ($a\in\mathbb{R}$). Relation \eqref{polynoom beta1} results in $\beta_1=0$, consequently \eqref{vorm tangents} equals zero and does not actually correspond to a real tangent direction. Eventually the algorithm will find a system that only contains isolated solutions, the tangent directions are constructed from these by application of \eqref{vorm tangents}, \eqref{polynoom w1} and \eqref{polynoom beta1}.

\subsection*{Solving the reduced system of equations}
Algorithm \ref{algoritme n=2} eventually gives rise to a system of equations of the form
\begin{equation} \label{reduced equation}
	\mathcal{F}(\alpha_1,\alpha_2,\beta_{k-1})=\begin{cases}
		&\sum\limits_{i=0}^{k}\alpha_1^i\alpha_2^{k-i}a_i^{(k,1)}+\beta_{k-1}\alpha_1b^{(1)} + \beta_{k-1}\alpha_2 b^{(3)}=0, \\
		&\sum\limits_{i=0}^{k}\alpha_1^i\alpha_2^{k-i}a_i^{(k,2)}+\beta_{k-1}\alpha_2b^{(2)}=0
	\end{cases}
\end{equation}
with $k\geq 2$ and known coefficients $a_0^{(k,1)},\dots, a_k^{(k,1)}, a_0^{(k,2)},\dots, a_k^{(k,2)}, b^{(1)}, b^{(2)}, b^{(3)}$ such that \eqref{reduced equation} only contains isolated solutions.

To find solutions of \eqref{reduced equation} we fix $\beta_{k-1}$. This does not reduce the amount of solutions that will be found since the tangent directions are determined up to normalization. Though in some cases it is possible to solve \eqref{reduced equation} exactly (for e.g. $k=2$ solving this equation is similar to determining the intersection of two conics, for which multiple algorithms exist), we will approximate its solutions with a Newton algorithm. This requires the Jacobian of \eqref{reduced equation}, given by
\begin{align*}
	&\begin{pmatrix}
		\frac{\partial \mathcal{F}(\alpha_1,\alpha_2,\beta_{k-1})}{\partial \alpha_1} & \frac{\partial \mathcal{F}(\alpha_1,\alpha_2,\beta_{k-1})}{\partial \alpha_2}
	\end{pmatrix} \\
	&\hspace{100pt}= \begin{pmatrix}
		\sum\limits_{i=1}\limits^{k}i\alpha_1^{i-1}\alpha_2^{k-i}a_i^{(k,1)}+\beta_{k-1}b^{(1)} & \sum\limits_{i=0}\limits^{k-1}(k-i)\alpha_1^i\alpha_2^{k-1-i}a_i^{(k,1)} + \beta_{k-1}b^{(3)} \\
		\sum\limits_{i=1}\limits^{k}i\alpha_1^{i-1}\alpha_2^{k-i}a_i^{(k,2)} & \sum\limits_{i=0}\limits^{k-1}(k-i)\alpha_1^i\alpha_2^{k-1-i}a_i^{(k,2)}+\beta_{k-1}b^{(2)}
	\end{pmatrix}.
\end{align*}
In practice the solutions are found by applying the Newton algorithm for a high amount of different initial guesses. Together with this Newton solver, algorithm \ref{algoritme n=2} allows for the construction of the tangent directions arising from bifurcations with Jacobian kernel dimension $2$.

The case $n=2$ only appears when the underlying problem exhibits symmetry and the bifurcation point is symmetric as well \cite{Mei2000,Mei1996}. In \cite{Mei2000} it is proved that for a bifurcation invariant under the actions of $C_m$ or $D_m$ ($m\geq 2$), the reduced system \eqref{reduced equation} has isolated solutions for $k=m-1$ and is invariant under the actions of $C_m$ respectively $D_m$ as well.
Due to the $C_m$ or $D_m$ invariance of \eqref{reduced equation}, a lot of its solutions result in tangent directions to equivalent solution branches: application of the action $\tau_\omega$ (see proposition \ref{prop - symmetrie}) on solutions of one branch lead to solutions of another. To prevent equivalent branches from being calculated, only one solution of \eqref{reduced equation} for each class of emerging branches is selected. For $m$ even/odd this results in respectively $3$ or $2$ different solutions.

Algorithm \ref{algoritme n=2} was used for the results of the triangular and rotation-symmetric materials in section \ref{sectie resulaten}. In both examples, it yielded the required tangent directions for each branch point with Jacobian kernel dimension $2$.

\subsection*{The equivariant branching lemma}
Though algorithm \ref{algoritme n=2} can always be applied for bifurcations with Jacobian kernel dimension $2$, it becomes less practical for strongly symmetrical problems: if the bifurcation is invariant under the actions of $C_m$ or $D_m$, $k\geq m-1$ iterations might be required before a reduced system that only contains isolated solutions is obtained \cite{Mei1996}. Before this system is found, at least $((m-1)m-2)/2$ solves of a linear equation with the Jacobian need to be performed. This is unwanted for $m\gg 1$.
For $m\geq 4$ one can still apply the algorithm to determine an equation for $\beta_1$ and $w_1$ as a polynomial in the variables $\alpha_1$ and $\alpha_2$ (see \eqref{polynoom w1} and \eqref{polynoom beta1}), this requires only one solve of the linear problem. For $D_m$ symmetric bifurcations the unknowns $\alpha_1$ and $\alpha_2$ can then alternatively be determined by application of the equivariant branching lemma. We will describe this second method in the remainder of this section.

We call a point $(\psi,\mu)$ $\gamma$-invariant (for $\gamma\in D_m$) if its transformation under the action $\gamma$ remains in the same solution family:
$$
(\psi,\mu) \text{ is } \gamma\text{-invariant} \iff \exists \eta\in[0,2\pi]: \theta_\eta \psi = \gamma\psi.
$$

Consider a branch point invariant under the actions of $D_m$ (with $m\geq 4$). If the eigenvalue corresponding to the bifurcation crosses the origin with non-zero speed, the equivariant branching lemma guarantees the existence of symmetry-breaking branches: new solution curves with symmetries corresponding to the conjugacy classes of the isotropy subgroups of $D_m$ will emerge from the branch point \cite{Golubitsky2002,Hoyle2006}. These conjugacy classes are $\langle\sigma\rangle$ and $\langle\tau_{\eta_0}\sigma\rangle$ (with $\eta_0=2\pi/m$) if $m$ is even and just $\langle\sigma\rangle$ if $m$ is odd.

Using the knowledge of the symmetry $\gamma$ of a solution curve that emerges from the branch point, it is possible to calculate the remaining unknowns $\alpha_1$ and $\alpha_2$ that construct the tangent direction ($\dot{\psi},\dot{\mu}$) to this curve. First, a representative $(\psi'_b,\mu_b)$ of the bifurcation point for which $\gamma\psi'_b=\psi'_b$ needs to be determined. In practice this is realized by applying a least-squares method: the point
$$
\psi'_b = \theta_{\eta_\gamma}\psi_b \text{ with } 
\eta_\gamma = \frac{1}{2}\arctan\left(\frac{\int\limits_{x\in\Omega}\mathfrak{Re}(\psi_b(x))\mathfrak{Im}(\gamma\psi_b(x))dx - \int\limits_{x\in\Omega}\mathfrak{Im}(\psi_b(x))\mathfrak{Re}(\gamma\psi_b(x))dx}{\int\limits_{x\in\Omega}\mathfrak{Re}(\psi_b(x))\mathfrak{Re}(\gamma\psi_b(x))dx + \int\limits_{x\in\Omega}\mathfrak{Im}(\psi_b(x))\mathfrak{Im}(\gamma\psi_b(x))dx}\right)
$$
satisfies $\gamma\psi'_b\approx\psi'_b$. Let $(\psi,\mu)$ be a point of the $\gamma$-symmetric solution curve that emerges from $(\psi'_b,\mu_b)$: $\gamma\psi=\psi$. If $(\psi,\mu)$ is chosen sufficiently close to $(\psi'_b,\mu_b)$, we have
\begin{equation} \label{benadering solution curve}
	\begin{cases}
		&\psi \approx \psi'_b+\epsilon\theta_{\eta_\gamma}\dot{\psi}, \\
		&\mu \approx \mu_b+\epsilon\dot{\mu}
	\end{cases}  \quad \text{with } \epsilon\in\mathbb{R},\epsilon \ll 1
\end{equation}
with the tangent direction $(\theta_{\eta_\gamma}\dot{\psi},\dot{\mu})$ given by (see \eqref{vorm tangents},\eqref{polynoom w1} and \eqref{polynoom beta1})
\begin{equation} \label{vorm tangent na sub beta en w}
	\begin{cases}
		&\theta_{\eta_\gamma}\dot{\psi} = \alpha_1 (\theta_{\eta_\gamma}\phi_1+\kappa_1^{(1)}\theta_{\eta_\gamma}v_0) + \alpha_2 (\theta_{\eta_\gamma}\phi_2+\kappa_0^{(1)}\theta_{\eta_\gamma}v_0), \\
		&\dot{\mu} = \alpha_1\kappa_1^{(1)} + \alpha_2\kappa_0^{(1)}
	\end{cases}
\end{equation}
with unknowns $\alpha_1$ and $\alpha_2$, and $v_0$, $\kappa_0^{(1)}$ and $\kappa_1^{(1)}$ defined by the initial steps of algorithm \ref*{algoritme n=2}. Since $\gamma\psi=\psi$ and $\gamma\psi'_b=\psi'_b$, equation \eqref{benadering solution curve} implies $\gamma\theta_{\eta_\gamma}\dot{\psi}=\theta_{\eta_\gamma}\dot{\psi}$. This result is combined with a least-squares method to yield the following formulas for $\alpha_1$ and $\alpha_2$:
\begin{align*}
	&\alpha_1 = -\int\limits_{x\in\Omega}\left(\mathfrak{Re}(\gamma\chi_1(x))-\mathfrak{Re}(\chi_1(x)) + \mathfrak{Im}(\gamma\chi_1(x))-\mathfrak{Im}(\chi_1(x))\right) \\
	&\hspace{53pt}*\left(\mathfrak{Re}(\gamma\chi_2(x))-\mathfrak{Re}(\chi_2(x)) + \mathfrak{Im}(\gamma\chi_2(x))-\mathfrak{Im}(\chi_2(x))\right)dx, \\
	&\alpha_2 = \int\limits_{x\in\Omega}\left(\mathfrak{Re}(\gamma\chi_1(x))-\mathfrak{Re}(\chi_1(x)) + \mathfrak{Im}(\gamma\chi_1(x))-\mathfrak{Im}(\chi_1(x))\right)^2dx \\
	&\text{with } \chi_1 = \theta_{\eta_\gamma}\phi_1+\kappa_1^{(1)}\theta_{\eta_\gamma}v_0 \qquad \chi_2 = \theta_{\eta_\gamma}\phi_2+\kappa_0^{(1)}\theta_{\eta_\gamma}v_0.
\end{align*}
Substitution of $\alpha_1$ and $\alpha_2$ in \eqref{vorm tangent na sub beta en w} gives the desired $\gamma$-symmetric tangent.

Though the use of the equivariant branching lemma is an efficient alternative for determining the tangent directions, it can only be applied if the bifurcation point is $D_m$ symmetric (for a certain $m\geq 4$). Such bifurcations cannot arise if the material or the magnetic field only exhibits rotational symmetry ($C_m$). It is however still possible to apply algorithm \ref{algoritme n=2} in this case.

Calculation of the bifurcation diagram for the square material of section \ref{sectie resulaten} made use of the method based on the equivariant branching lemma. For each branch point with Jacobian kernel dimension $2$, it was able to construct the required tangent directions.

\section{Numerical results} \label{sectie resulaten}
This section contains the solution landscapes for multiple shapes of materials, calculated with an extension of the Python package PyNCT \cite{Draelants2015}. We will only consider small-scale (mesoscopic) shapes, which are for example of interest when building nanoscale fluxonics devices to use in e.g. SQUIDs \cite{Kleiner2004}, RSFQ processors \cite{Filippov2012} and supercomputers \cite{Dorojevets2015,Holmes2013,Murphy2017}. The materials used in these devices are typically shaped as a triangle, square or disc. Our methods however allow for general shapes, which will be indicated by the second example. 

The materials are subject to a homogeneous magnetic field, numerical continuation is performed in this fields strength ($\mu$). For each shape, the homogeneous solution $\psi \equiv 1$ in absence of a magnetic field ($\mu=0$) is taken as the starting point. The other points and bifurcations are determined using the tools discussed in this paper, yielding an automatically explored solution landscape.
Note that the bifurcation diagrams are plotted in terms of the expression
$$
\mathcal{E}(\psi) = -|\Omega|^{-1}\int_{\Omega}|\psi|^4d\Omega,
$$
which is part of the Gibbs energy \eqref{Gibbs free energy functional} (see \cite{Schlomer_square} for more details).

\subsection*{Triangular material}

Our first example is a material shaped as a regular triangle, the size of the edges is $6$ (scaled in units of the coherence length $\xi$, see section \ref{sectie Ginzburg-Landau systeem}). The corresponding bifurcation diagram, together with representative solutions for each solution curve, is given in figure \ref{fig - bifurcation diagram triangle}.
\begin{figure}
	\centering
	\includegraphics[trim = 0mm 0mm 0mm 0mm,clip,scale=0.12]{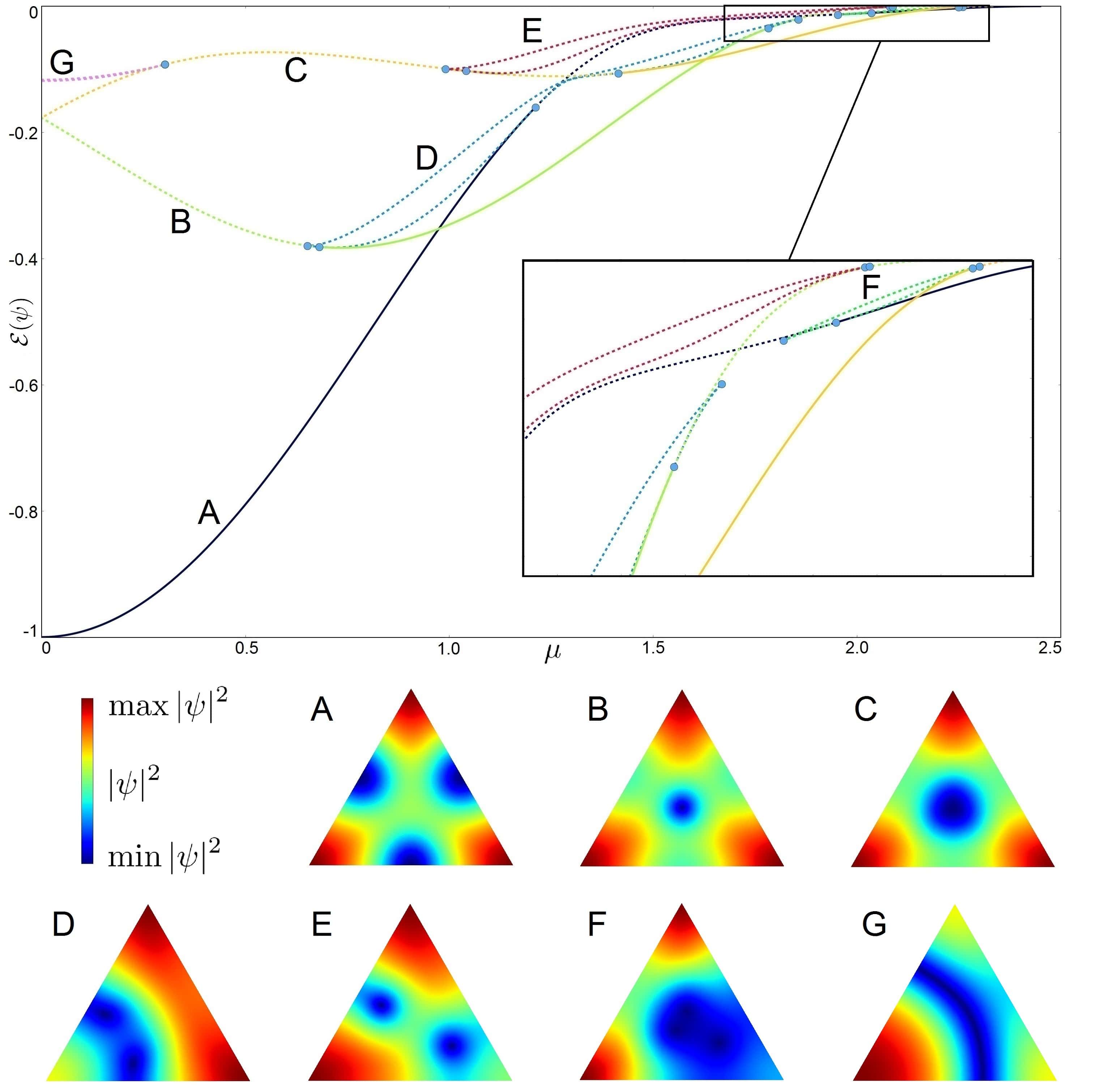}
	\caption{Bifurcation diagram for a triangular material with edges sized $6$ (above) and representative solutions for each solution curve (below). The free energy $\mathcal{E}(\psi)$ of the solutions is shown as a function of the strength of the applied magnetic field $\mu$. Solid (dashed) lines represent stable (unstable) solutions. Bifurcation points are indicated with a blue dot.}
	\label{fig - bifurcation diagram triangle}
\end{figure}

A total of $16$ bifurcation points were found during the continuation: $7$ turning points and $9$ branch points. For all of these branch points, the corresponding Jacobian contained a kernel with dimension $n=2$. Application of algorithm \ref{algoritme n=2} yielded the tangent directions to the emerging curves. Note that only the initial steps of this algorithm had to be executed.

The trivial solution $\psi\equiv 1$, $\mu=0$ is part of solution branch A. For values of $\mu$ between $0$ and approximately $1.21$, branch A is stable. For non-zero field strength, the solutions consist of zones of low supercurrent density near the edges of the domain. At $\mu\approx 1.21$ branch A destabilizes. If $\mu$ is further increased along this branch, three vortices move in from the midpoints of the edges towards the center. At field strength $\mu\approx 1.90$, the three vortices arrive at the center and form a giant vortex with multiplicity $3$. Further increase shows how this giant vortex breaks up again and three separate vortices slowly move away from the center. At $\mu\approx 2.03$ branch A restabilizes, and at a value of $\mu\approx 2.45$ the solution curve connects to the trivial zero solution $\psi\equiv 0$. Solutions between these two values consist of three separate vortices near the centre of the material.

Except for branch A, two more main solution branches (with full $D_3$ symmetry) were discovered: branches B and C. Branch B consists of solutions with a single vortex in the centre of the material. It is stable for values of $\mu$ between approximately $0.68$ and $1.78$. When the fields strength is increased along solution branch B, three vortices start to appear near the edges of the domain. At $\mu\approx2.10$, these vortices start to move towards the single, central vortex. These do however not join together, as at $\mu\approx 2.23$ solution curve B connects to the trivial zero solution. Except for the bifurcations at $\mu\approx0.68$ and $\mu\approx1.78$, branch B contains one at $\mu\approx 2.08$ as well.

For low values of $\mu$, solution branch C also consists of solutions with a single vortex in the centre. As the field strength is increased along this branch, three more vortices, with opposite vorticity to the central one, move from the edges of the domain towards this central vortex. At $\mu\approx 1.26$, these vortices join the central one to form one giant vortex with multiplicity $2$. This vortex remains in the centre of the material during further increase of the fields strength. Solution branch C is unstable for low values of $\mu$, but stabilizes between $\mu\approx1.41$ and $\mu\approx2.25$. For high values of the fields strength, the edges of the domain show zones of low supercurrent density as well. The curve connects to the trivial zero solution at $\mu\approx2.34$. Four bifurcations were found in total on solution curve C: the ones at $\mu\approx1.41$ and $\mu\approx2.25$, where the stability of the solutions change, and two more at  $\mu\approx0.30$ and $\mu\approx1.04$.

Solution branch D connects the three main solution branches. Starting from the bifurcation point of branch A at $\mu\approx1.21$ and decreasing the parameter, a single vortex moves from the edge of the material towards the centre. It arrives there at $\mu\approx0.68$, where the curve connects with branch B. Decreasing the fields strength further shows how the vortex moves towards the opposing corner of the triangle, until a turning point at $\mu\approx0.65$ is encountered. When $\mu$ is increased from this value, the vortex moves further towards the corner until $\mu\approx1.12$. Around this value, a second vortex appears from the corner the original one was moving to, and both vortices start to move towards the centre of the material. At $\mu\approx 1.33$ they are joined into a giant vortex with multiplicity $2$ (this does not happen in the centre). Further increase shows how this giant vortex is split into two vortices again, one of which moves back towards the corner, the other one towards the centre. A second turning point is encountered at $\mu\approx 1.86$. Decreasing the fields strength from this value, both vortices arrive at their destination: at $\mu\approx1.78$ solution branch D connects with branch B, this solution consists of a single vortex in the centre and zones of low supercurrent density at the edges. Decreasing $\mu$ from this bifurcation point shows how a second vortex moves from one edge of the triangle towards the centre, where it joins the central vortex into a giant one with multiplicity $2$. This corresponds with the bifurcation point at $\mu\approx 1.41$. Finally, when the parameter is decreased even further, the giant vortex splits into two vortices, which both move towards the edges of the domain. Solution curve D reconnects with itself at $\mu\approx1.21$.

Except for branch D, branches A and C are connected through solution branch F as well. Starting from the solution of branch A at $\mu\approx2.03$, consisting of three separate vortices near the centre of the material, the vortices move towards one of the materials corners when the field strength is decreased. A turning point is encountered at $\mu\approx1.95$. Increase of $\mu$ from this value shows how one of the vortices completely moves to the corner while the other two return to the centre, until a turning point at $\mu\approx2.26$ is reached. Decreasing $\mu$, zones of low supercurrent density start to appear at the edges, until a value of $\mu\approx2.25$, where solution branch F connects with branch C. When the fields strength is decreased further, a vortex moves from one of the edges towards the centre, eventually leading back to the bifurcation at $\mu\approx2.03$.

Solution branch E connects branches B and C as well. We start from the bifurcation of branch B at $\mu\approx2.08$, consisting of a solution with one central vortex. Decreasing $\mu$, two vortices - one from a triangle's corner, the other from the opposite edge - move towards the central vortex. Before the one moving from the edge reaches the centre, the other two join together into a giant vortex with multiplicity 2. Further decrease shows how this vortex splits back into two smaller ones, after which all of the vortices start to move towards the edges. Eventually two vortices remain: one in a corner, another at the opposite edge. These two vortices move towards each other, until they are joined into a giant one with vorticity 2 at the centre of the material. This happens at $\mu\approx1.04$, where branch E connects with branch C. Decreasing $\mu$ further, this vortex splits again, with both vortices moving towards different corners. A turning point is encountered for $\mu\approx0.99$. Increasing the fields strength from this value, the vortices continue to move towards the corners. Before these corners are reached a new vortex appears at the intermediate edge. This new vortex starts to move towards the centre, while the other two vortices now move towards the remaining edges. After encountering another turning point at $\mu\approx2.09$, solution branch E eventually reconnects with itself at the bifurcation for $\mu\approx2.08$.

Finally, branch G connects branch C with its symmetric counterpart for negative values of $\mu$. The solution branch starts at $\mu\approx0.30$, a bifurcation point of branch C that shows a pattern with one central vortex connected to the edges by zones of low supercurrent density. Decreasing the fields strength, the vortex moves to one of the corners, the zone connecting it to the opposite edge stretching towards it. Increasing the parameter $\mu$ shows how the vortex moves towards one of the edges, the two zones connecting it to the other edges again stretching towards it. Almost immediately a turning point is encountered (approximately at $\mu\approx0.30$). Decreasing $\mu$ from this point shows how the vortex first keeps moving towards the edge, later it is joined with the zones of low supercurrent density, yielding a single zone stretching from one edge to the other, bent away from the intermediate corner.

\subsection*{Rotation-symmetric material}
The second material is shaped as a four-pointed star. It only exhibits rotational symmetry, the corresponding symmetry group of the problem is hence given by $C_4$. The domain is chosen such that it fits into a square with edge $6$: the distance between the central point and outer corners is given by $\sqrt{10}$, the one between the central point and inner corners by $\sqrt{2}$. The size of the short and long edges of the star itself are respectively given by $2$ and $2\sqrt{2}$. The bifurcation diagram, and representative solutions, associated with this material is given in figure \ref{fig - bifurcation diagram star}. Several close-ups of the diagram are given in figure \ref{fig - bifurcation diagram star zooms}.

\begin{figure}
	\centering
	\includegraphics[trim = 0mm 0mm 0mm 0mm,clip,scale=0.12]{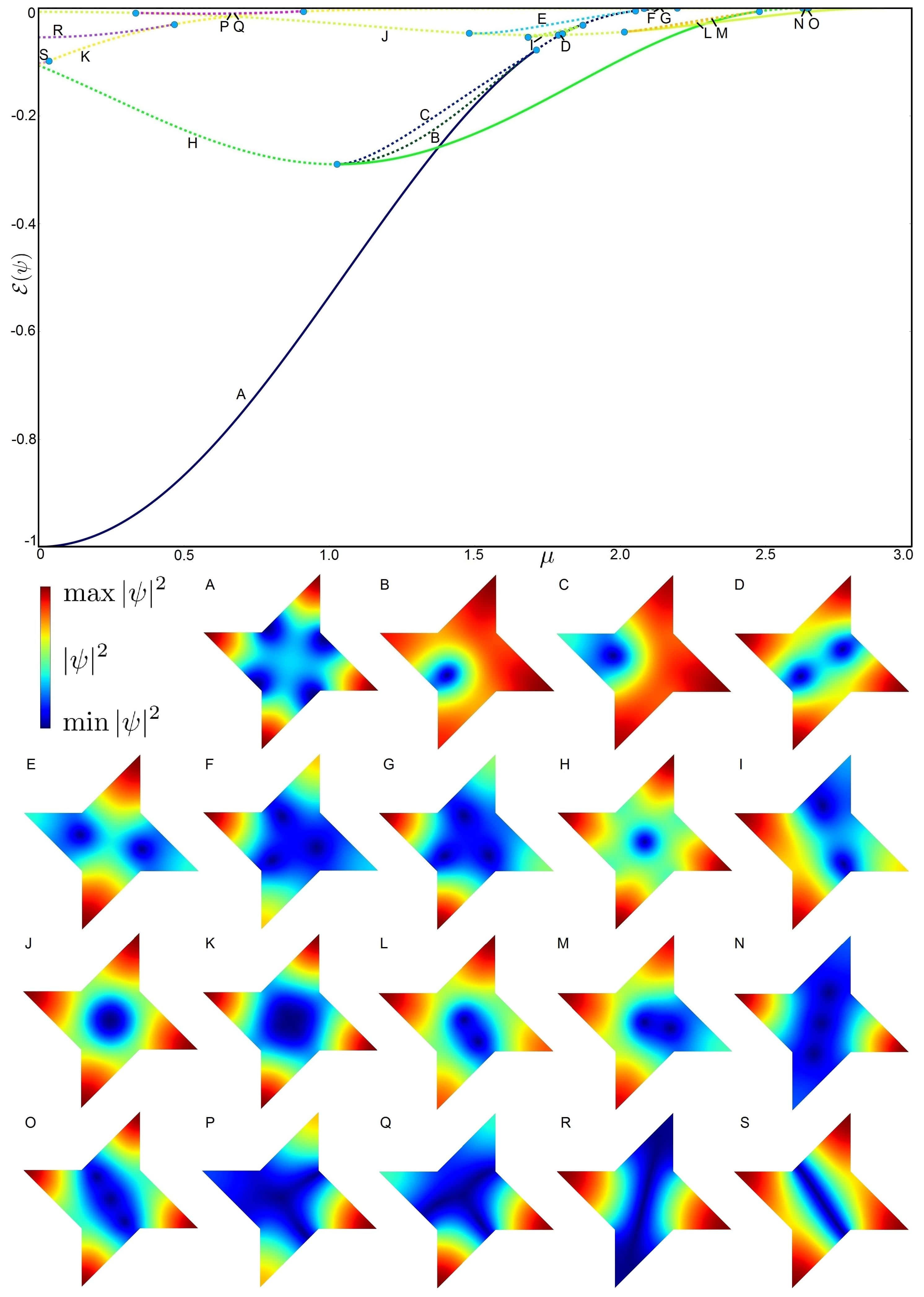}
	\caption{Bifurcation diagram for $C_4$-symmetric, star-shaped material (above) and representative solutions for each solution curve (below). The free energy $\mathcal{E}(\psi)$ of the solutions is shown as a function of the strength of the applied magnetic field $\mu$. Solid (dashed) lines represent stable (unstable) solutions. Bifurcation points are indicated with a blue dot.} \label{fig - bifurcation diagram star}
\end{figure}

\begin{figure}
	\centering
	\begin{subfigure}[t]{0.49\textwidth}
		\includegraphics[trim = 0mm 0mm 0mm 0mm,clip,scale=0.23]{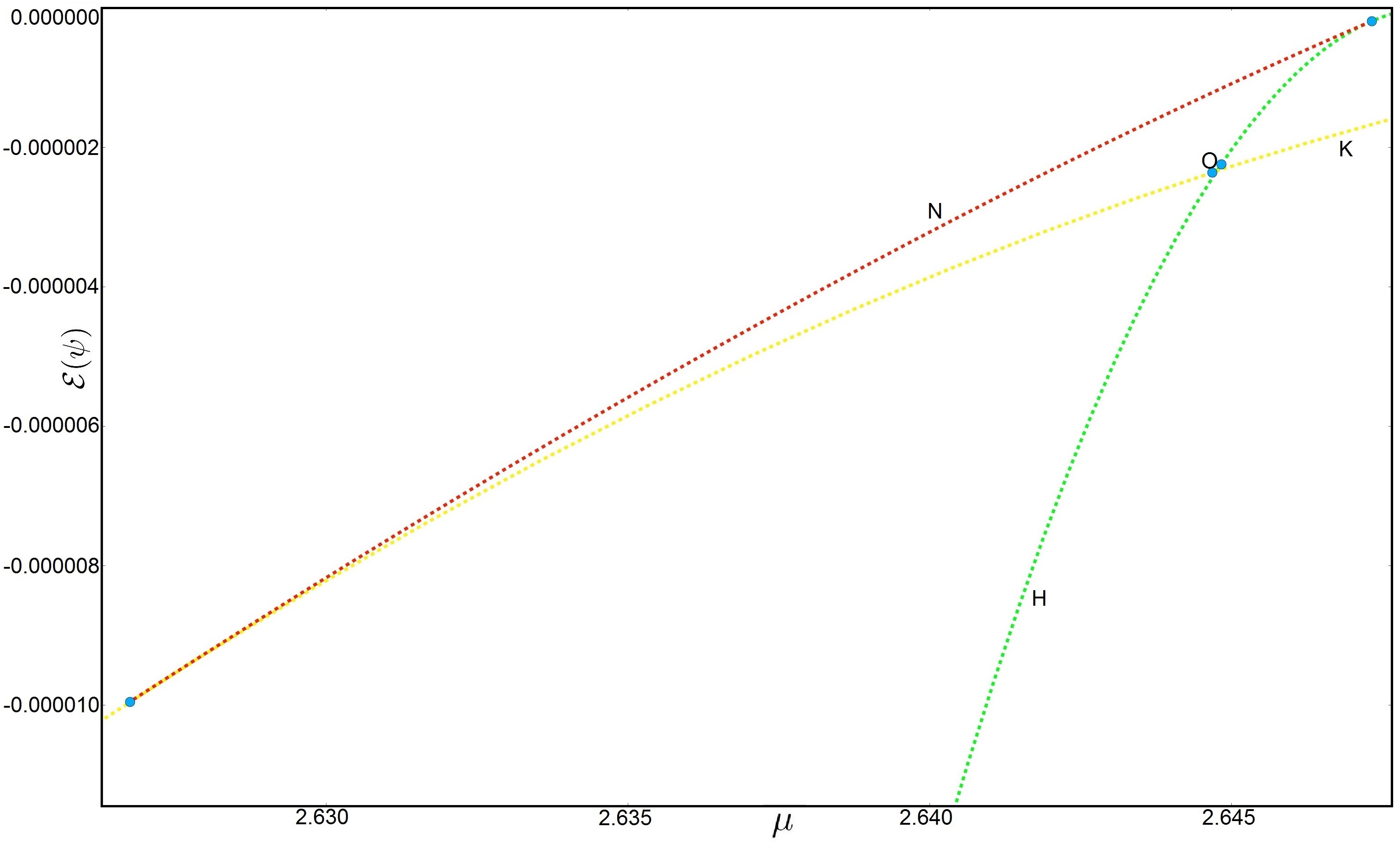}
		\caption{Close-up around branches N and O}
	\end{subfigure}
	\hfil
	\begin{subfigure}[t]{0.49\textwidth}
		\includegraphics[trim = 0mm 0mm 0mm 0mm,clip,scale=0.23]{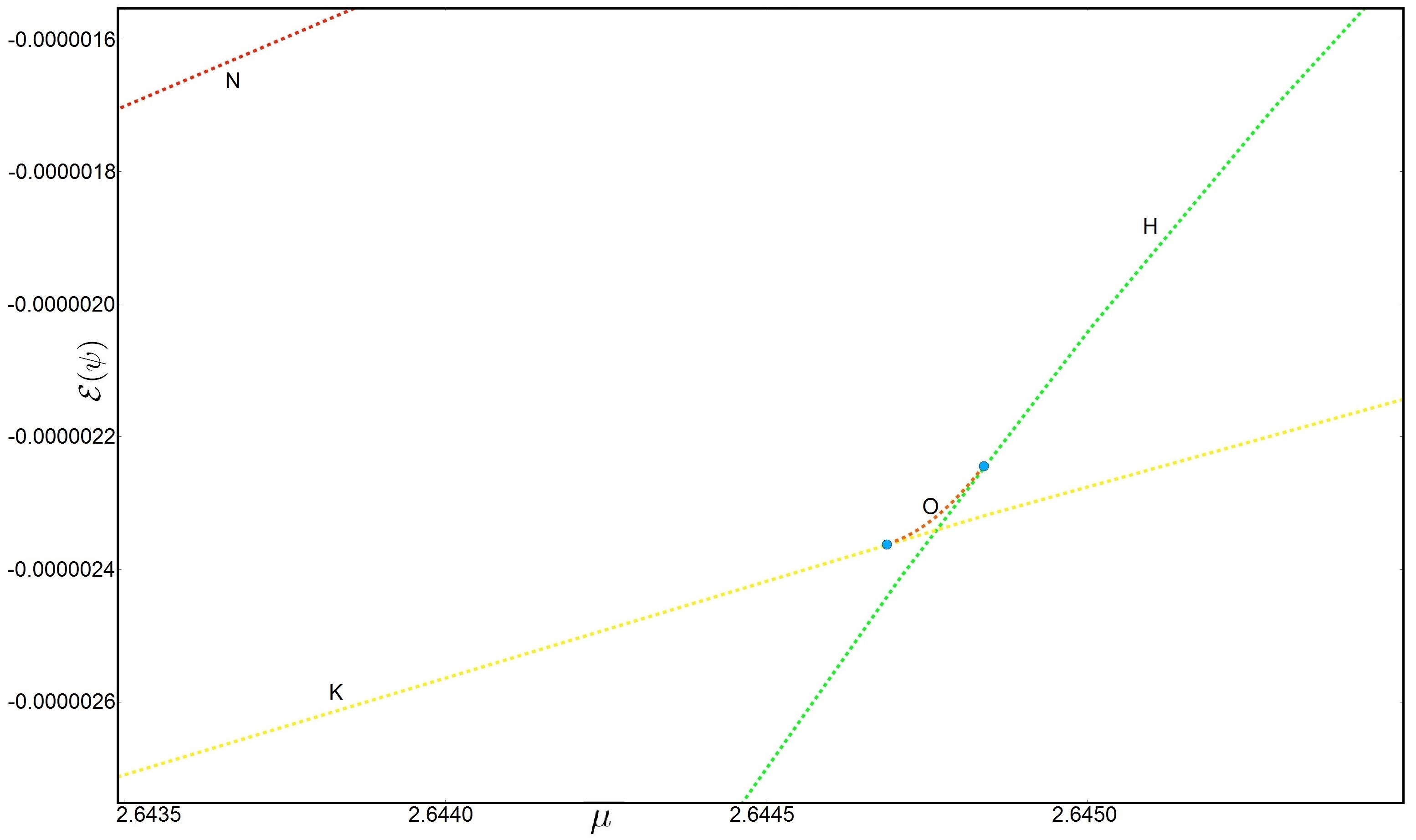}
		\caption{Close-up around branch O}
	\end{subfigure}
	\begin{subfigure}[t]{0.49\textwidth}
		\includegraphics[trim = 0mm 0mm 0mm 0mm,clip,scale=0.23]{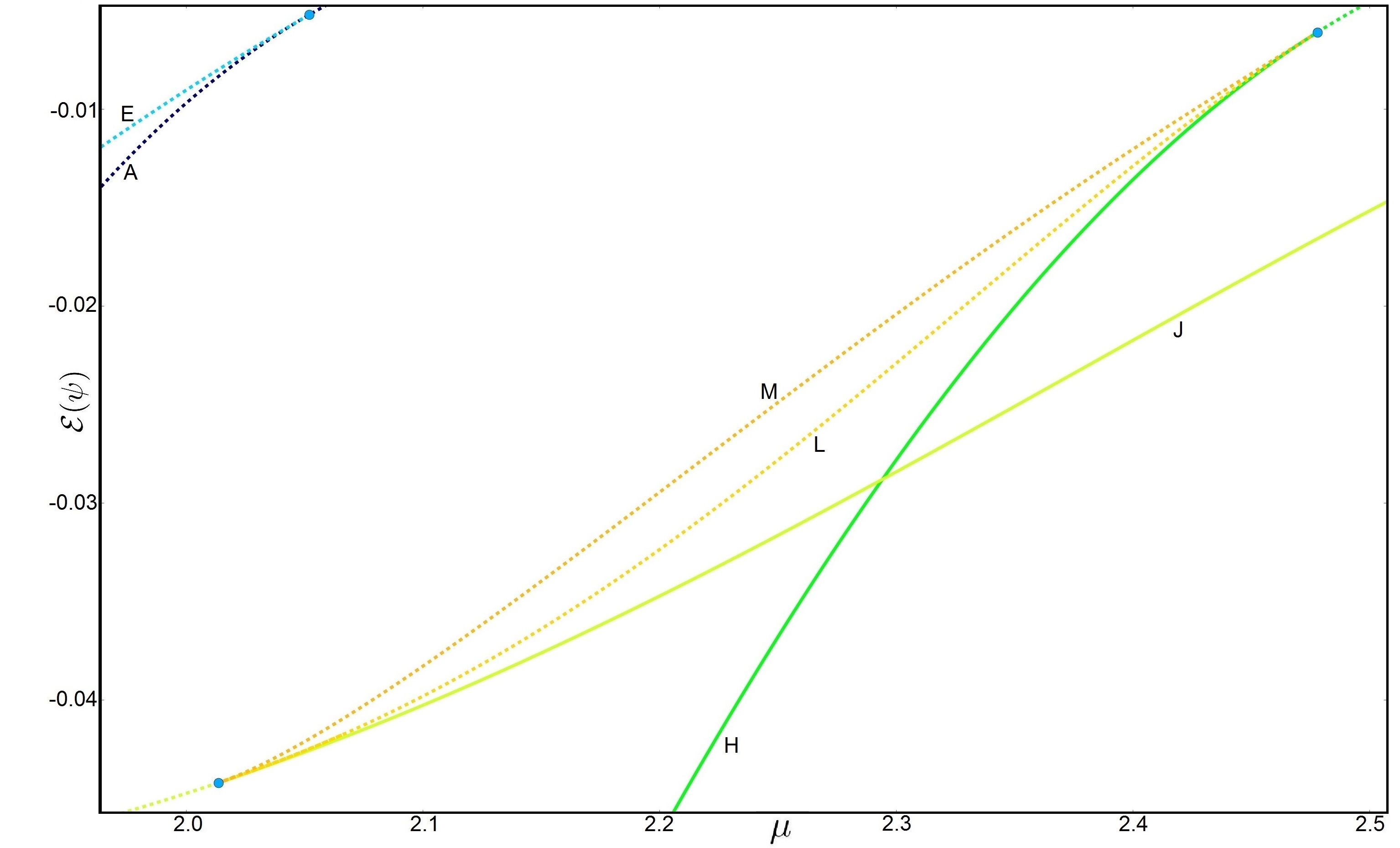}
		\caption{Close-up around branches L and M}
	\end{subfigure}
	\hfil
	\begin{subfigure}[t]{0.49\textwidth}
		\includegraphics[trim = 0mm 0mm 0mm 0mm,clip,scale=0.23]{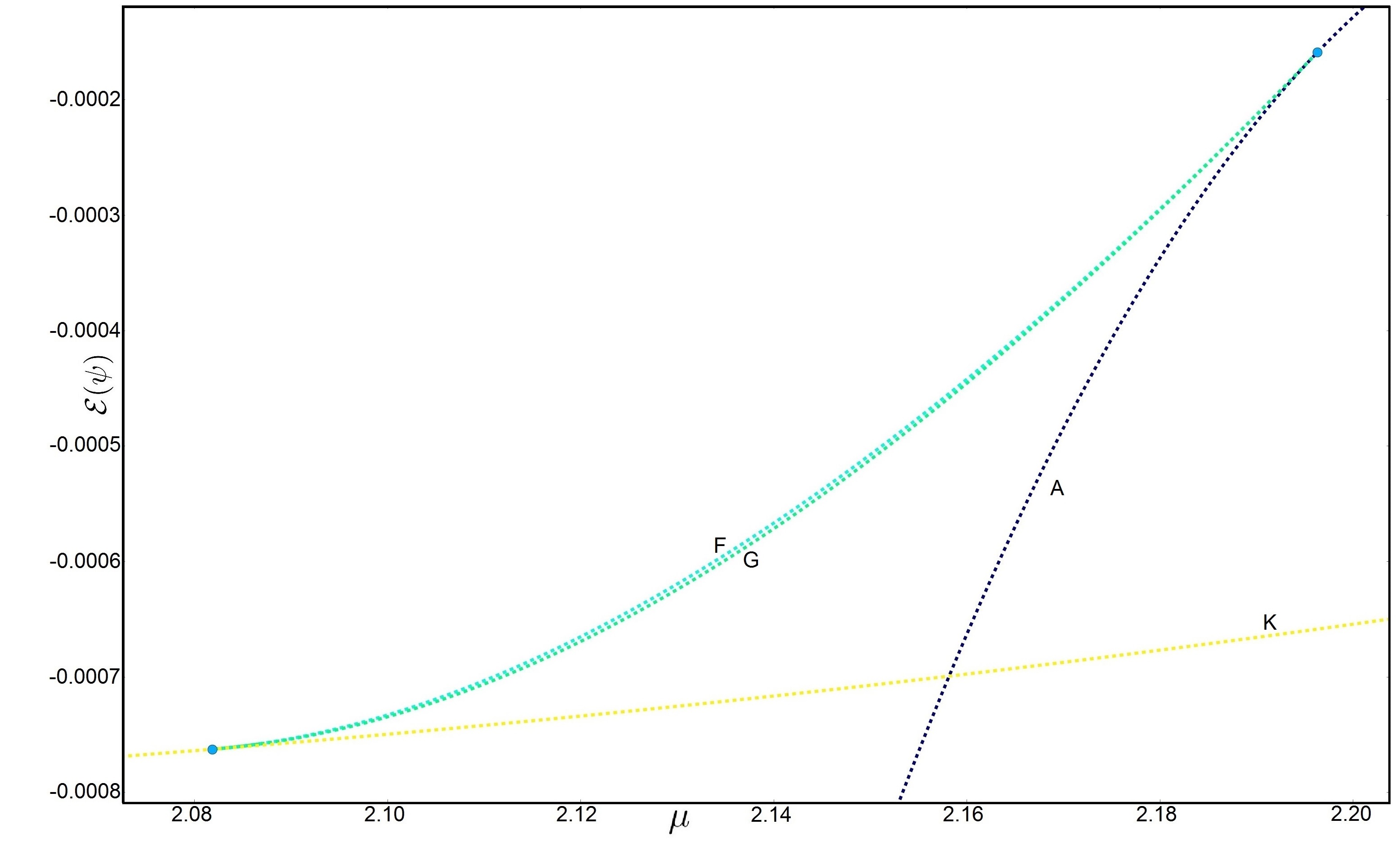}
		\caption{Close-up around branches F and G}
	\end{subfigure}
	\begin{subfigure}[t]{0.49\textwidth}
		\includegraphics[trim = 0mm 0mm 0mm 0mm,clip,scale=0.23]{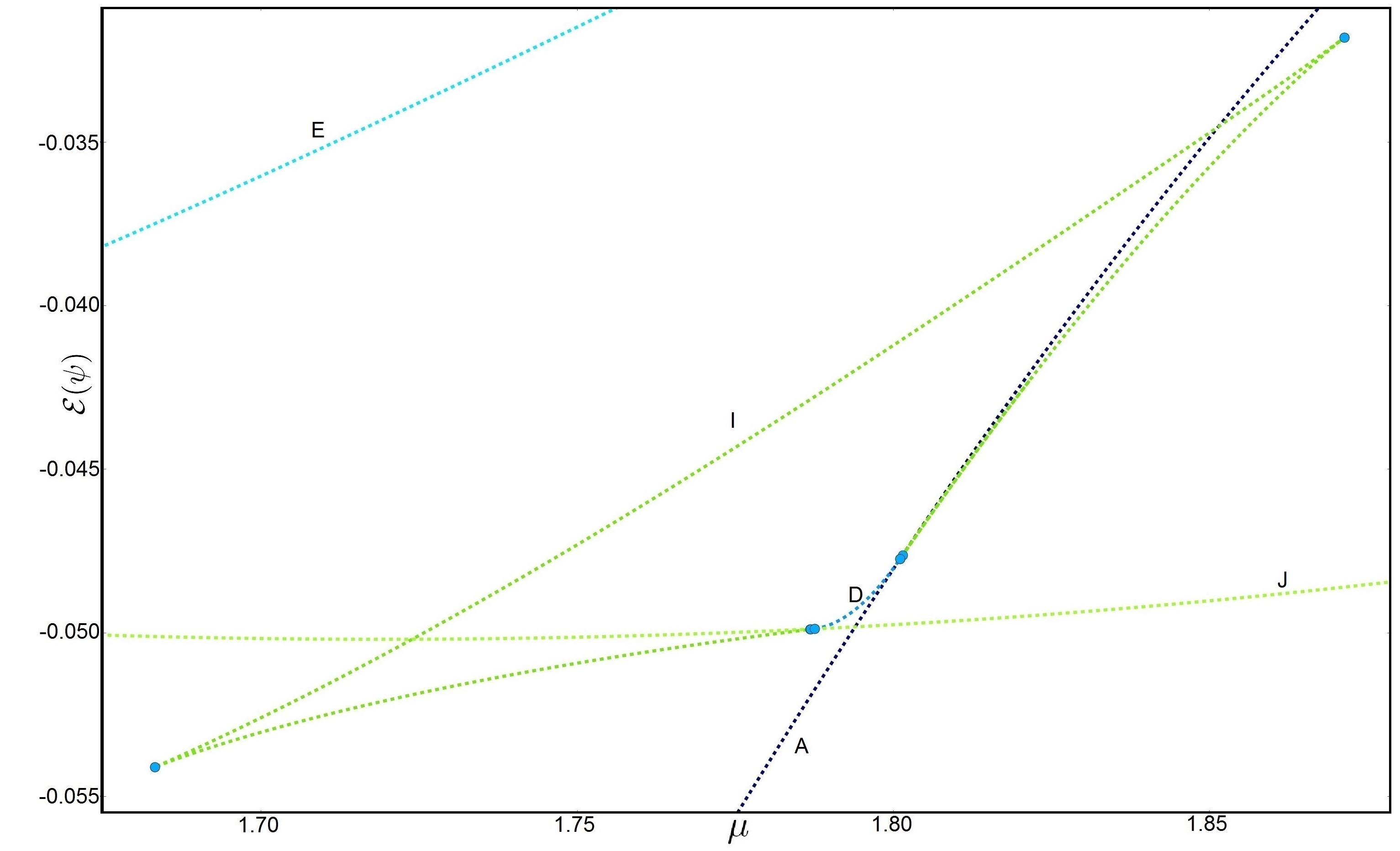}
		\caption{Close-up around branches D and I}
	\end{subfigure}
	\hfil
	\begin{subfigure}[t]{0.49\textwidth}
		\includegraphics[trim = 0mm 0mm 0mm 0mm,clip,scale=0.23]{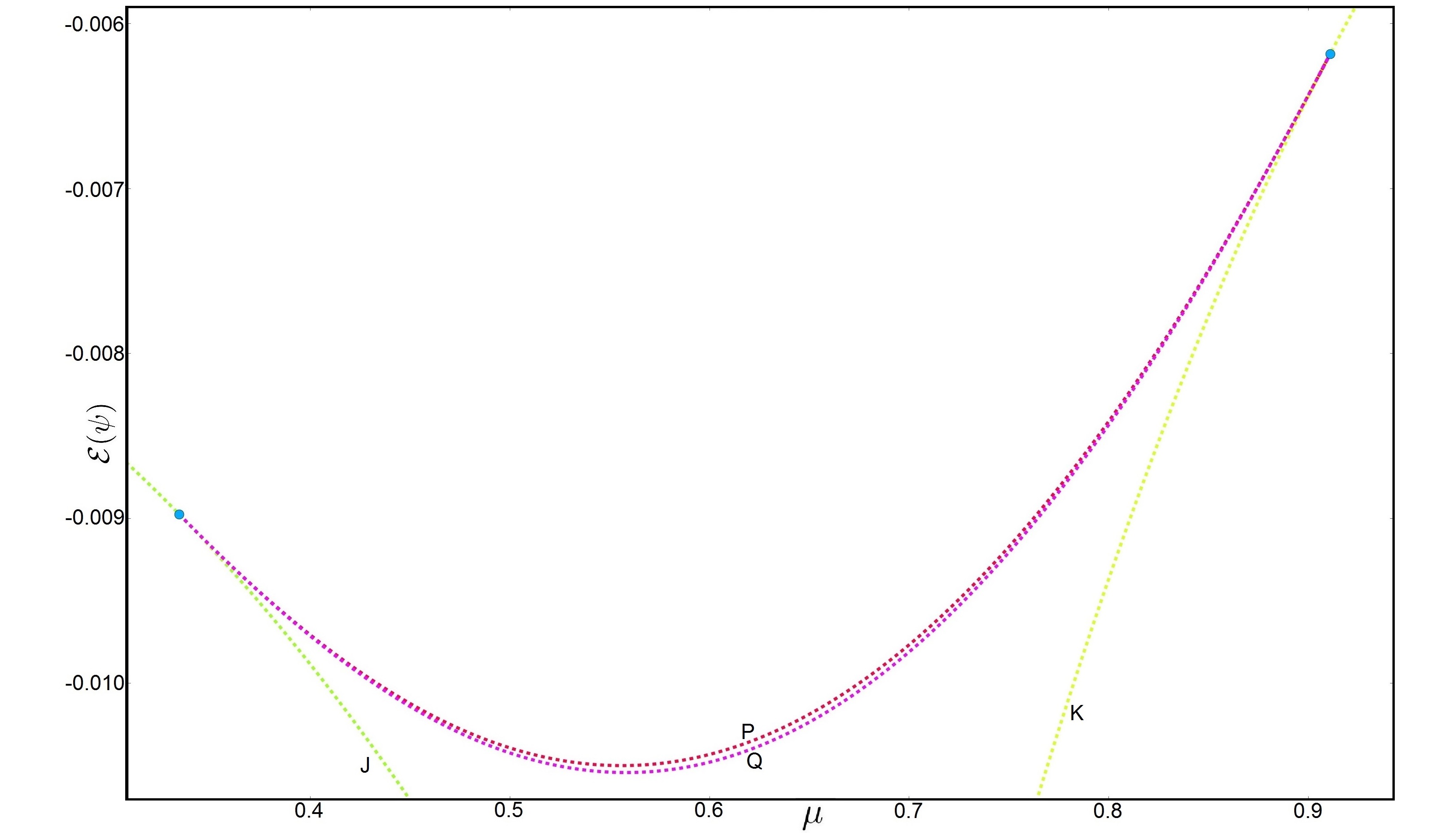}
		\caption{Close-up around branches P and Q}
	\end{subfigure}
	
	\begin{subfigure}[t]{0.49\textwidth}
		\includegraphics[trim = 0mm 0mm 0mm 0mm,clip,scale=0.23]{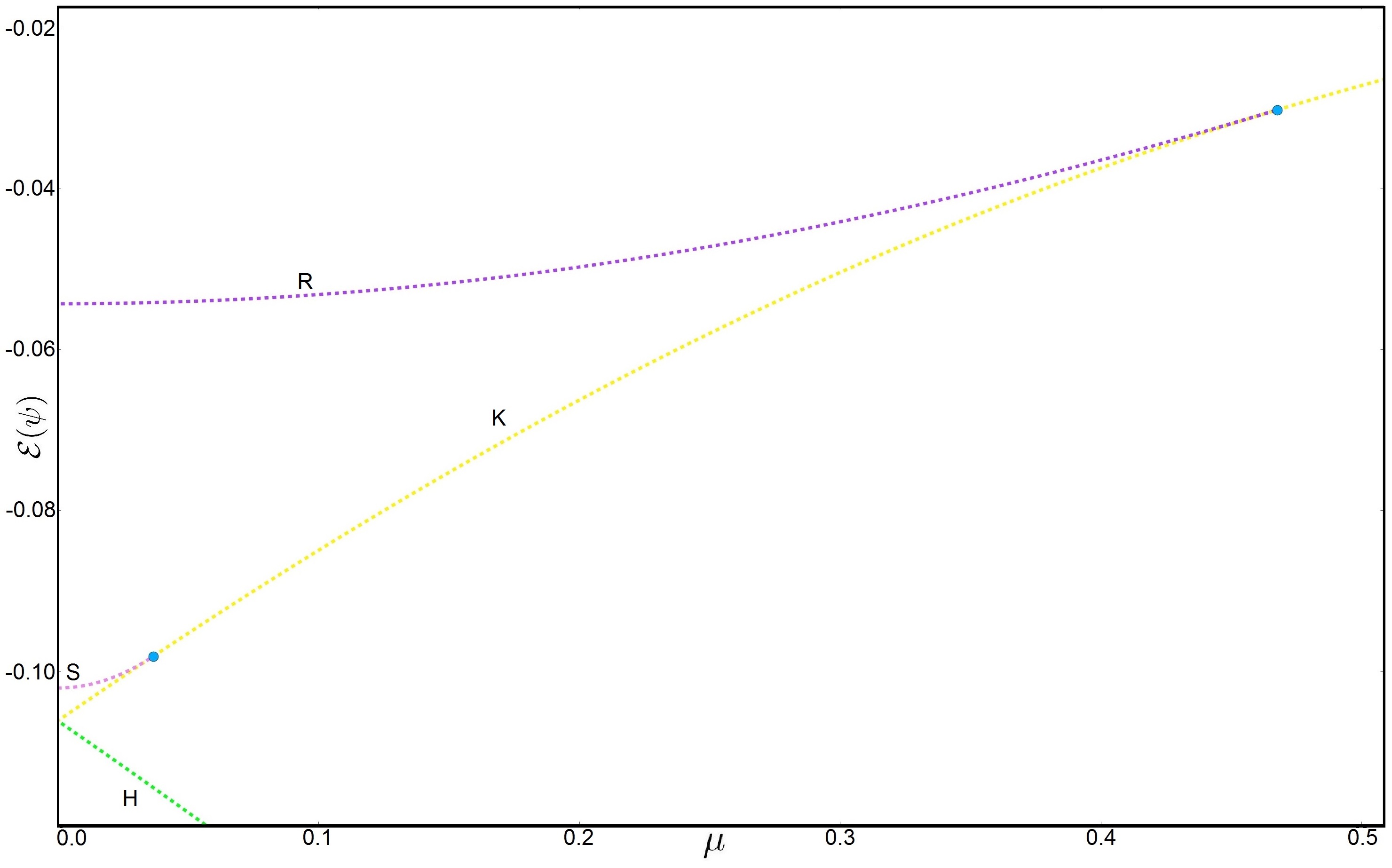}
		\caption{Close-up around branches R and S}
	\end{subfigure}
	\caption{Several close-ups of the bifurcation diagram of figure \ref{fig - bifurcation diagram star}.} \label{fig - bifurcation diagram star zooms}
\end{figure}

Multiple bifurcation points were again discovered during the continuation, this time not all of the branch points corresponded to a Jacobian with a $2$-dimensional kernel. Several ones only contained a kernel consisting of a single null vector, for these points application of algorithm \ref{algorithm Mei} resulted in the (single) new tangent direction. As has been stated in section \ref{sectie tangents}, for these cases the algorithm is equivalent to applying the algebraic branching equation. Other branch points did correspond to a Jacobian with a $2$-dimensional kernel, for these points a single iteration of algorithm \ref{algoritme n=2} ($k=3$) was applied to construct the tangent directions. Note that these directions could not be determined by application of the equivariant branching lemma since the problem only exhibits rotational symmetry.

Contrary to the triangular material, four main branches (with full $C_4$ symmetry) were discovered. These are labelled A, H, J and K in figure \ref{fig - bifurcation diagram star}. Branch A contains the trivial solution $\psi\equiv 1$, $\mu=0$. Increasing the strength of the applied magnetic field from this solution, four zones of low supercurrent density form near the inner corners of the material. It connects to the trivial zero solution $\psi\equiv0$ at $\mu\approx2.23$. The solution branch is stable for values of $\mu$ between $0$ and approximately $1.71$. Other bifurcations are situated at $\mu\approx1.80$, $\mu\approx2.05$ and $\mu\approx2.20$.

All of the solutions of branch H contain a single vortex in the centre of the material. For high values of the field strength ($\mu\gtrsim 2.15$), zones of low supercurrent density start to form near the inner corners as well. The solution branch is stable between values $\mu\approx1.03$ and $\mu\approx2.48$. Two more bifurcations were discovered near $\mu\approx2.65$, immediately before the curve connects to the trivial zero solution $\psi\equiv0$.

For low values of $\mu$, solutions that belong to branch J show a cross pattern of low superconductivity: zones of low supercurrent density appear in the centre and stretch towards the inner corners. Increasing the field strength along this branch, a bifurcation with the same pattern is discovered near $\mu\approx0.33$. The pattern of the solutions changes for $\mu\gtrsim0.95$: a vortex with multiplicity 2 appears in the centre of the material. Further increase of $\mu$ gives rise to more bifurcations at $\mu\approx1.48$, $\mu\approx1.79$ and $\mu\approx2.01$. At this last bifurcation, branch J stabilizes. It remains stable until it connects with the trivial $\psi\equiv0$ solution near $\mu\approx2.90$.

The final main solution branch discovered for this problem is branch K. Similar to branch H, its solutions contain a single vortex in the centre for low values of the field strength. Increasing this strength shows how four vortices, with opposite vorticity to the central one, form near the inner corners ($\mu\gtrsim0.20$), afterwards these vortices move towards the centre ($\mu\gtrsim1.12$) and join with the central one into a giant vortex with multiplicity 3 ($\mu\gtrsim2.00$). The branch eventually connects to the trivial zero solution $\psi\equiv0$ at $\mu\approx2.66$. A total of 6 bifurcations were discovered for solution branch K, at values of $\mu\approx0.04$, $\mu\approx0.47$, $\mu\approx0.91$, $\mu\approx2.08$, $\mu\approx2.62$ and $\mu\approx2.64$. Note that no stable solutions have been found for this branch.

The four main solution branches are again mutually connected by other branches. Branch A and H are connected through branches B and C. Starting from the bifurcation of branch A at $\mu\approx1.71$ and decreasing the field strength, branch B shows how a single vortex moves from one of the inner corners towards the centre of the material, appearing there at the bifurcation $\mu\approx1.03$ of branch H. Branch C shows the same behaviour, with the vortex appearing from one of the outer corners.

Branches D and E connect branch A with J. Decrease of the field strength from the bifurcation at $\mu\approx1.80$ of branch A along solution branch D shows how two vortices move from opposite inner corners towards the centre. At the bifurcation point of branch J for $\mu\approx1.79$ the vortices join together into a giant vortex with multiplicity 2. Note that branch D contains two extra bifurcations, near the ones of branch A. These will be discussed later in this section. Branch E is again similar to branch D: it connects branch A from its bifurcation at $\mu\approx2.05$ to J at the bifurcation for $\mu\approx1.48$. Contrary to branch D, the vortices appear at the outer corners in the solutions of branch E, and branch E does not contain other bifurcation points itself.

Branches A and K are connected through F and G. Both branches start at the bifurcation for $\mu\approx2.20$ of A and lead to the one for $\mu\approx2.08$ of K. Three vortices move from the corners of the material towards the centre, joining into a giant one with vorticity 3. For branch F one of these vortices appears in an outer corner, the other two at the inner ones, for branch G the opposite happens.

Solution branch H is connected to J by both L and M, both connecting these branches between the bifurcations at $\mu\approx2.48$ (on branch H) and $\mu\approx2.01$ (on branch J). For branch L, a vortex appears at an inner corner of the material and moves towards the central one, eventually joining into a giant vortex of multiplicity 2. The same happens in branch M, this time for a vortex appearing at one of the outer corners.

Another connection exists between solution branches H and K. Decrease of the field strength at the first bifurcation near $\mu\approx2.65$ along branch O shows how two vortices form near the inner corners of the material and start to move towards the central one. At the bifurcation for $\mu\approx2.64$ the three vortices join into a giant one with vorticity 3. Solution branch N is similar to O and connects branches H and K through the second bifurcation near $\mu\approx2.65$ (of H) and the one at $\mu\approx2.62$ (of K). For this branch the vortices appear near the outer corners of the material.

Two final connections appear between branches J and K. Starting from the bifurcation of branch J at $\mu\approx0.33$ with the cross pattern, increase of the field strength along branch P shows how the zones of low superconductivity start to stretch towards one of the outer corners. Eventually a new vortex moves from this corner towards the centre, appearing there at the bifurcation of branch K for $\mu\approx0.91$. Branch Q is similar, with the zones of low supercurrent density stretching towards an inner corner.

Both solution branches R and S connect K with its symmetric counterpart for negative $\mu$ values. Starting with a single vortex solution at the bifurcation for $\mu\approx0.04$, decrease of the field strength along branch S shows how the vortex changes into a zone of low superconductivity stretching from one inner corner towards the opposite one. A similar behaviour is observed for branch R, which starts at the bifurcation for $\mu\approx0.47$. For this branch the vortex solution changes into a zone of low supercurrent density that stretches between two opposite outer corners.

The final solution branch I is unique in that it is not a main branch, nor does it connect two main branches. It appears between the two extra bifurcation points of branch D. Starting from a solution consisting of two vortices near opposite inner corners at $\mu\approx1.80$ and increasing $\mu$ along branch I, the vortices start to move in the same direction towards two of the the outer corners. A turning point is encountered near $\mu\approx1.87$. Decreasing $\mu$ from this value, the vortices first continue to move towards the outer corners, but will not reach them: after a while the vortices start to move back towards each other, eventually reaching another turning point near $\mu\approx1.68$. Increase of the field strength shows how the vortices both move back towards the centre, eventually arriving at a configuration where the two vortices lie close to each other near the centre of the material, which coincides with the bifurcation of branch D at $\mu\approx1.79$. Note that the turning points at $\mu\approx 1.87$ and $\mu\approx1.68$ are the only ones that appear in the bifurcation diagram of figure \ref{fig - bifurcation diagram star}.

\subsection*{Square material}
Our final example consists of a material shaped as a square with edges sized 5.5. This problem was also treated in \cite{Schlomer_square}, where a bifurcation diagram with 13 different solution branches was constructed. Application of the methods described in the current paper yields the same bifurcation diagram, with an extra 30 branches, resulting in a total of 43 solution curves. The four main branches (with full $D_4$ symmetry) are shown in figure \ref{fig - main diagram square}. Several close-ups of the bifurcation diagram containing other solution curves are given in figure \ref{fig - bifurcation diagram square zooms}. A schematic representation of the diagram can be found in figure \ref{fig - schematic square}. Finally, representative solutions for each branch are given in figure \ref{fig - representative square}.

\begin{figure}
	\centering
	\begin{subfigure}[t]{1.0\textwidth}
		\includegraphics[trim = 0mm 0mm 0mm 0mm,clip,scale=0.5]{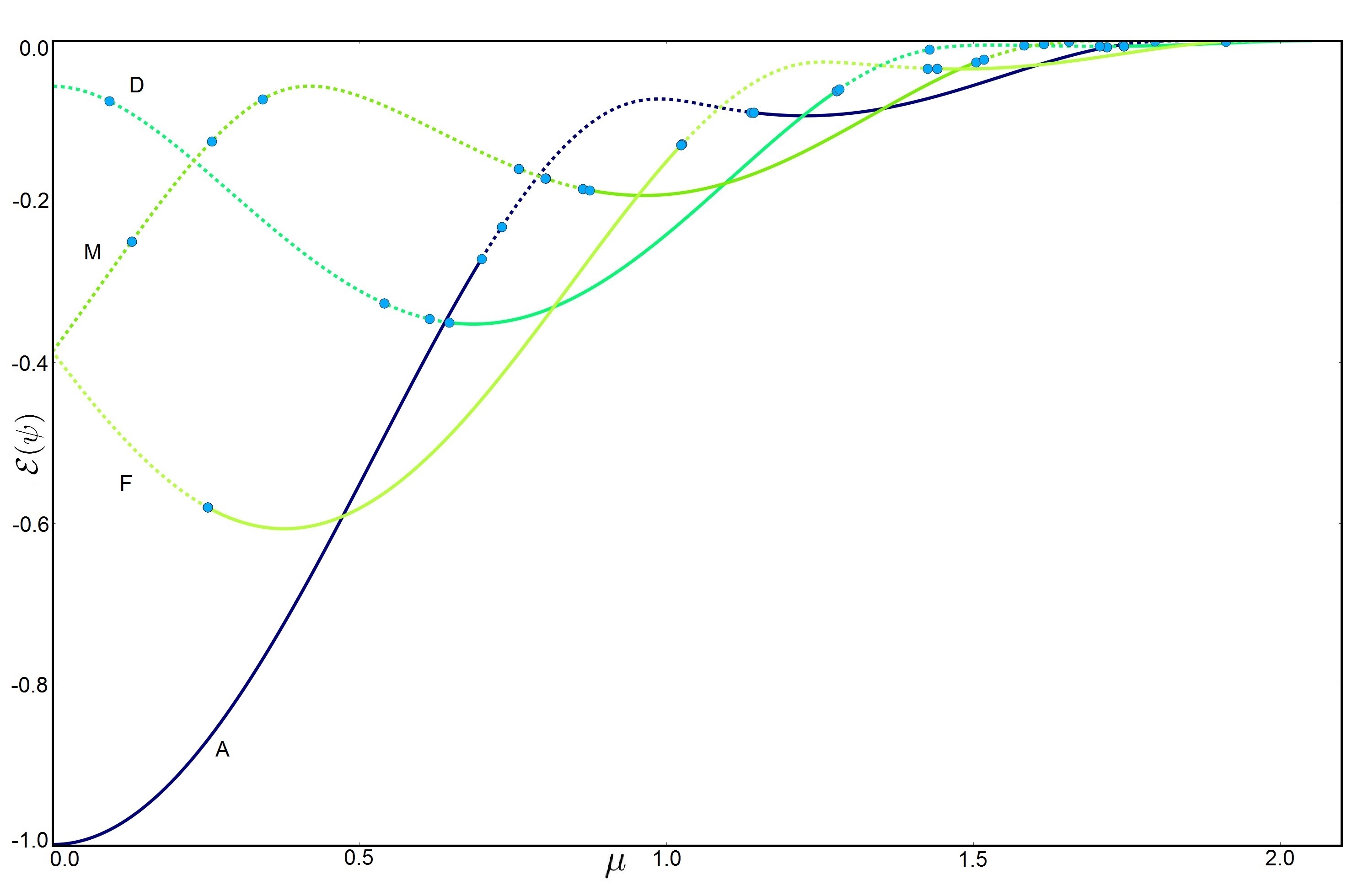}
		\caption{Complete view}
	\end{subfigure}
	
	\begin{subfigure}[t]{1.0\textwidth}
		\includegraphics[trim = 9mm 0mm 0mm 0mm,clip,scale=0.5]{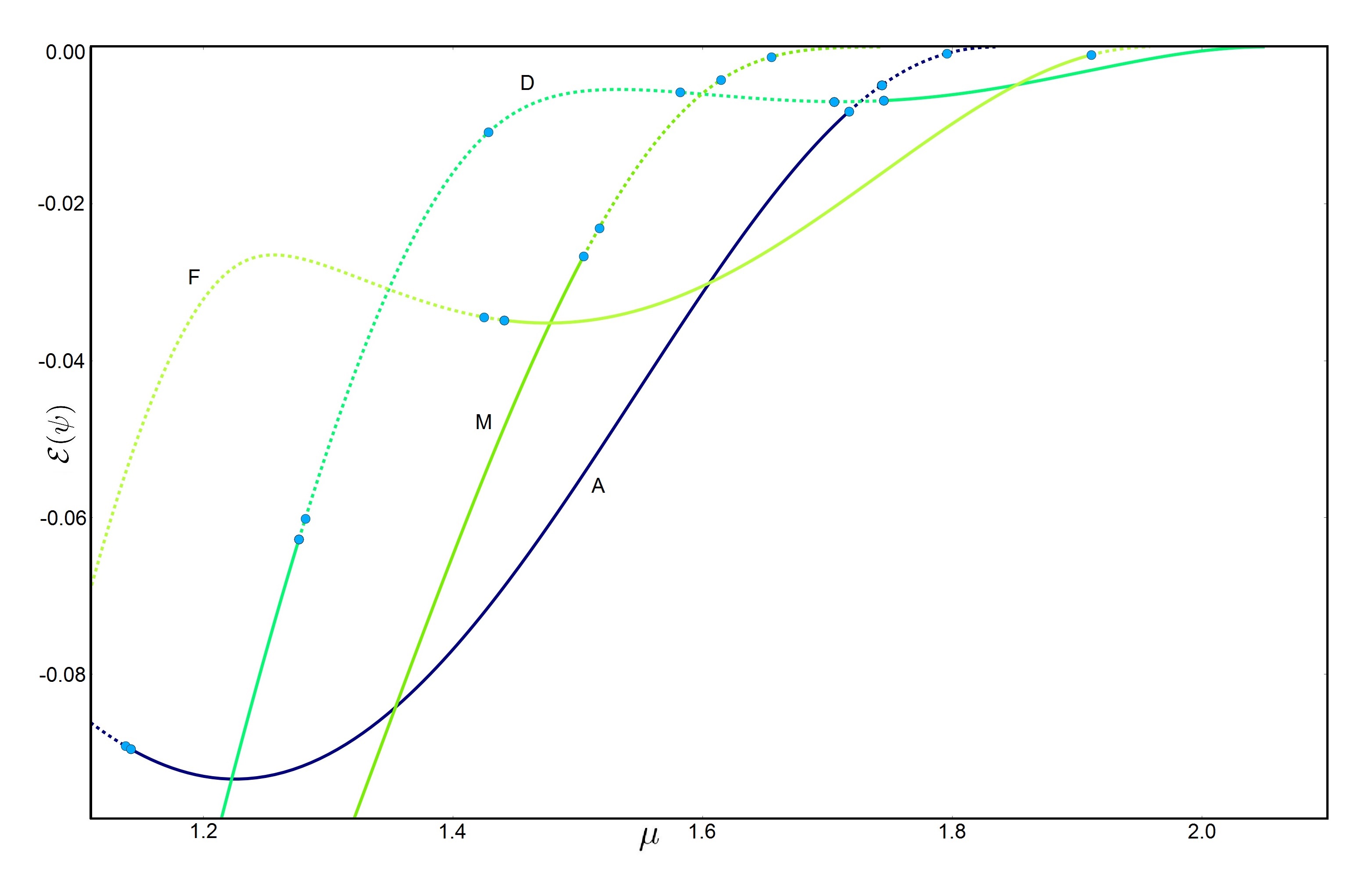}
		\caption{Close-up for $1.1\lesssim \mu \lesssim 2.1$, $-0.1 \lesssim \mathcal{E}(\psi) \leq 0$}
	\end{subfigure}
	\caption{Main solution branches of the bifurcation diagram for the square-shaped material. The free energy $\mathcal{E}(\psi)$ of the solutions is shown as a function of the strength of the applied magnetic field $\mu$. Solid (dashed) lines represent stable (unstable) solutions. Bifurcation points are indicated with a blue dot.} \label{fig - main diagram square}
\end{figure}

\begin{figure}
	\centering
	\includegraphics[trim = 0mm 0mm 0mm 0mm,clip,scale=0.12]{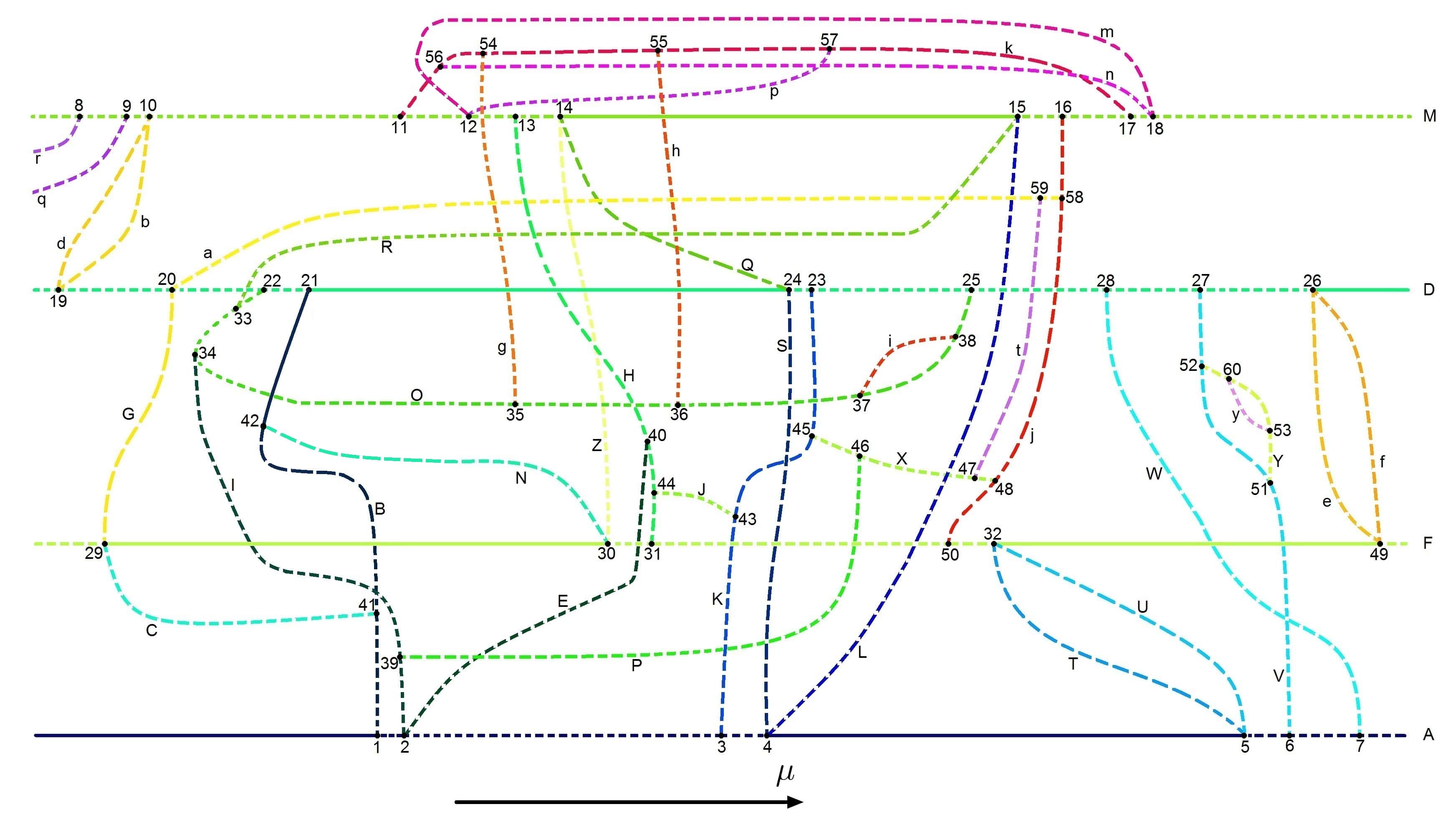}
	\caption{Schematic of solution branches and branch points for a square-shaped material with edge 5.5. Solid (dashed) lines represent stable (unstable) solutions. Representative solutions for each branch are given in figure \ref{fig - representative square}, the $\mu$ values for the branch points are given in table \ref{fig - table}.} \label{fig - schematic square}
\end{figure}

\begin{figure}
	\centering
	\begin{subfigure}[t]{0.49\textwidth}
		\includegraphics[trim = -5mm 0mm 0mm 0mm,clip,scale=0.23]{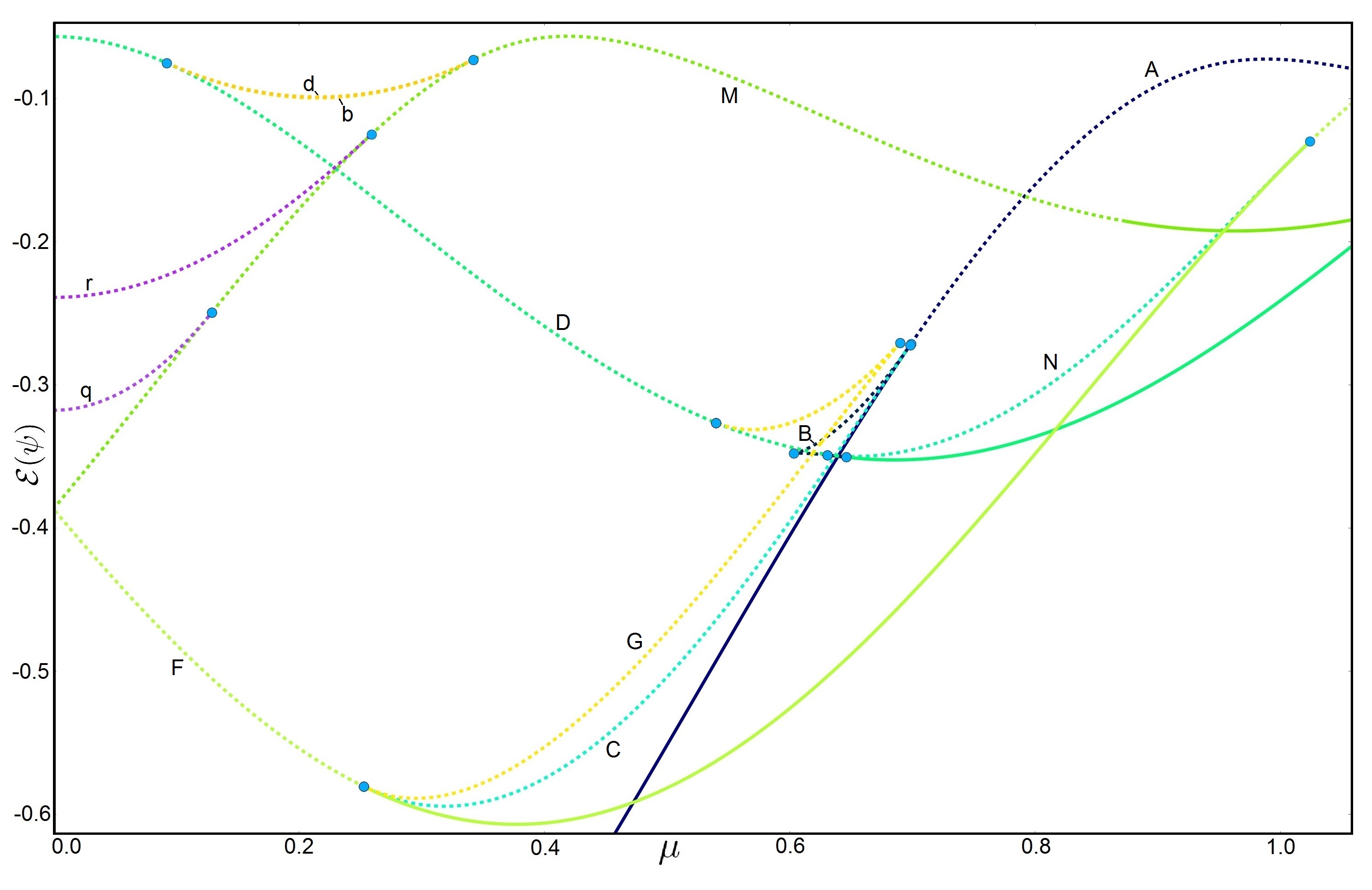}
		\caption{Branches B, C, G, N, b, d, q and r}
	\end{subfigure}
	\hfil
	\begin{subfigure}[t]{0.49\textwidth}
		\includegraphics[trim = -5mm 0mm 0mm 0mm,clip,scale=0.23]{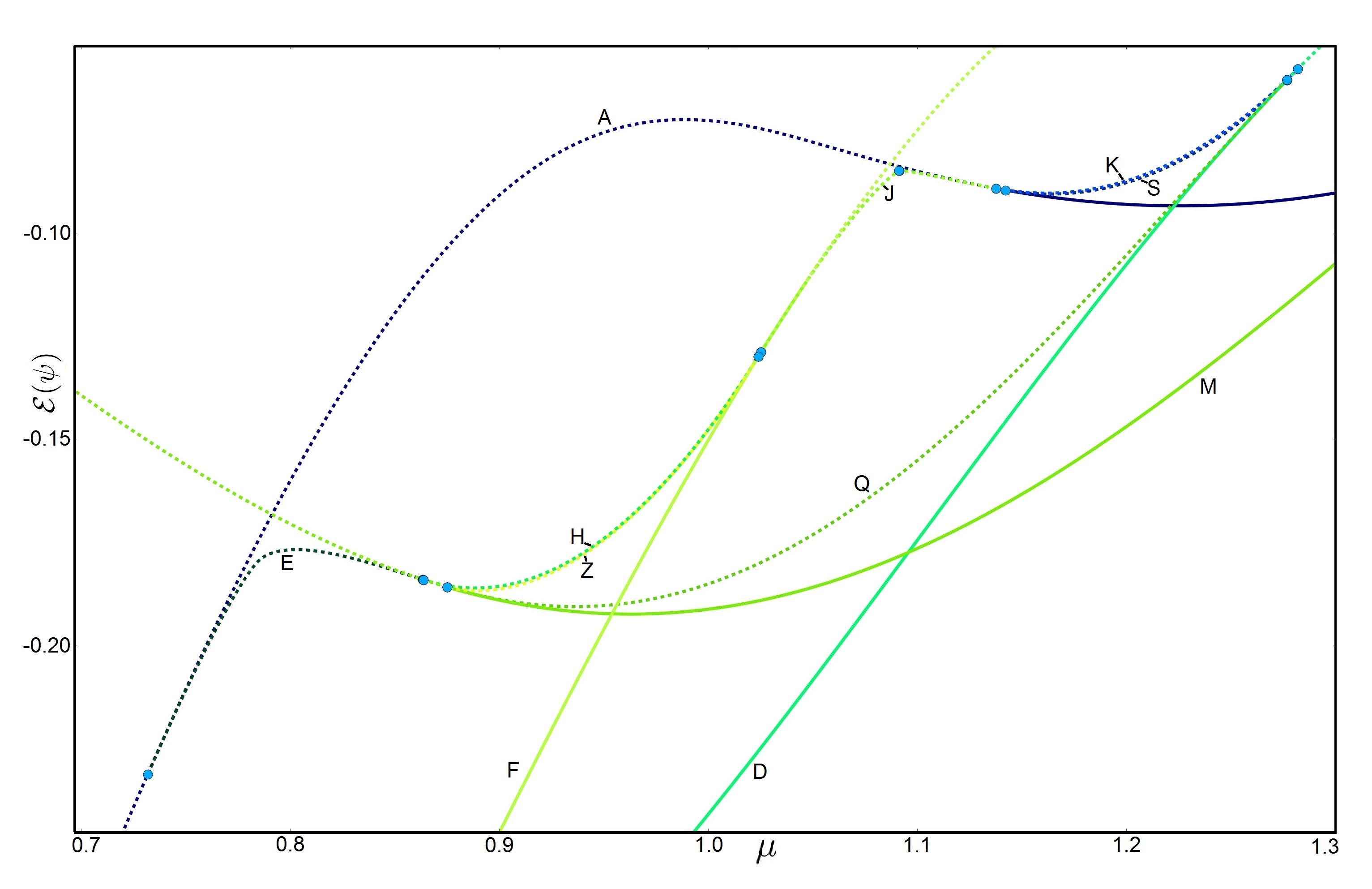}
		\caption{Branches E, H, J, K, Q, S and Z}
	\end{subfigure}
	\begin{subfigure}[t]{0.49\textwidth}
		\includegraphics[trim = 0mm 0mm 0mm 0mm,clip,scale=0.23]{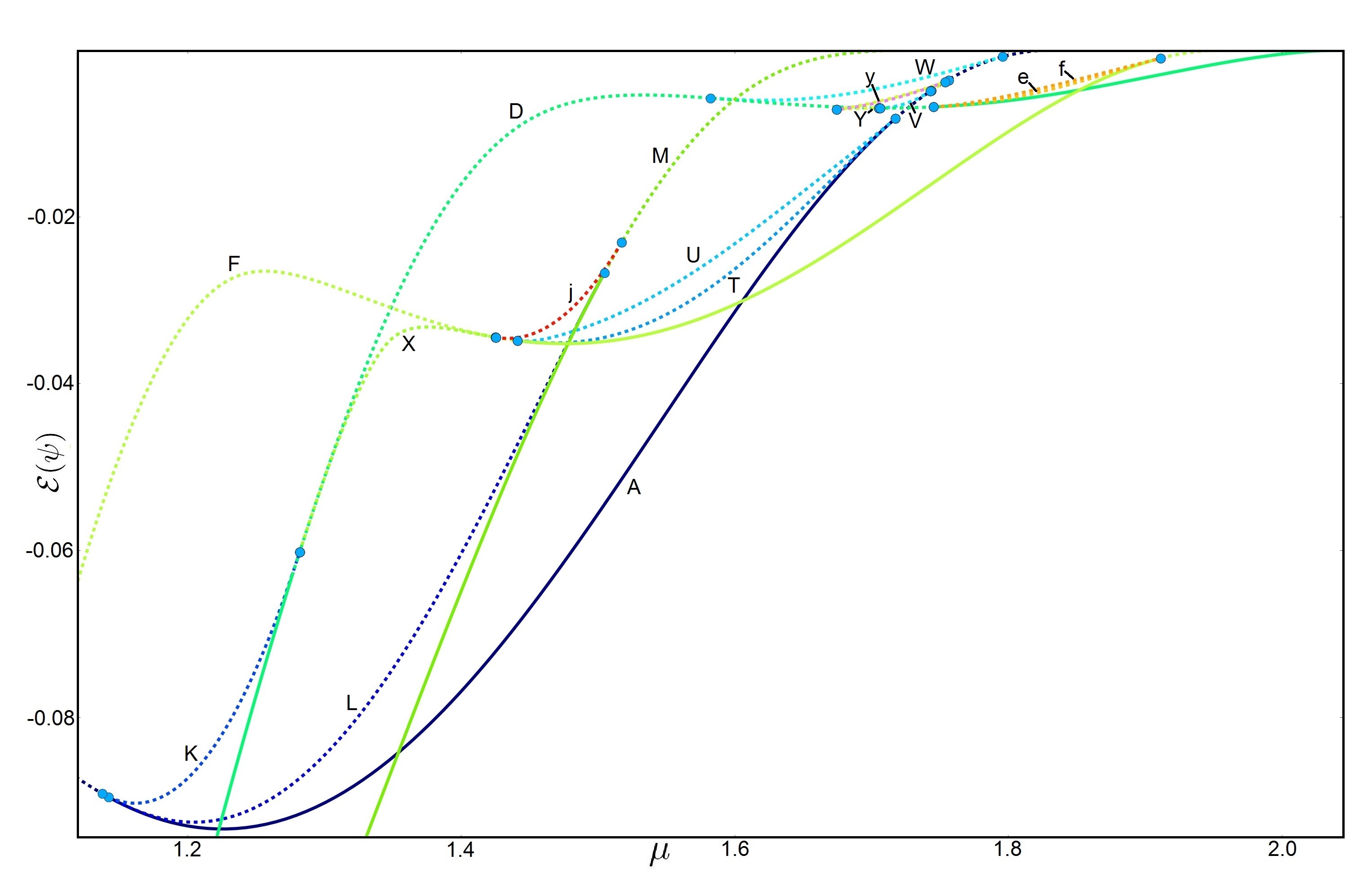}
		\caption{Branches K, L, T, U, V, W, X, Y, e, f, j and y}
	\end{subfigure}
	\hfil
	\begin{subfigure}[t]{0.49\textwidth}
		\includegraphics[trim = -10mm 0mm 0mm 0mm,clip,scale=0.23]{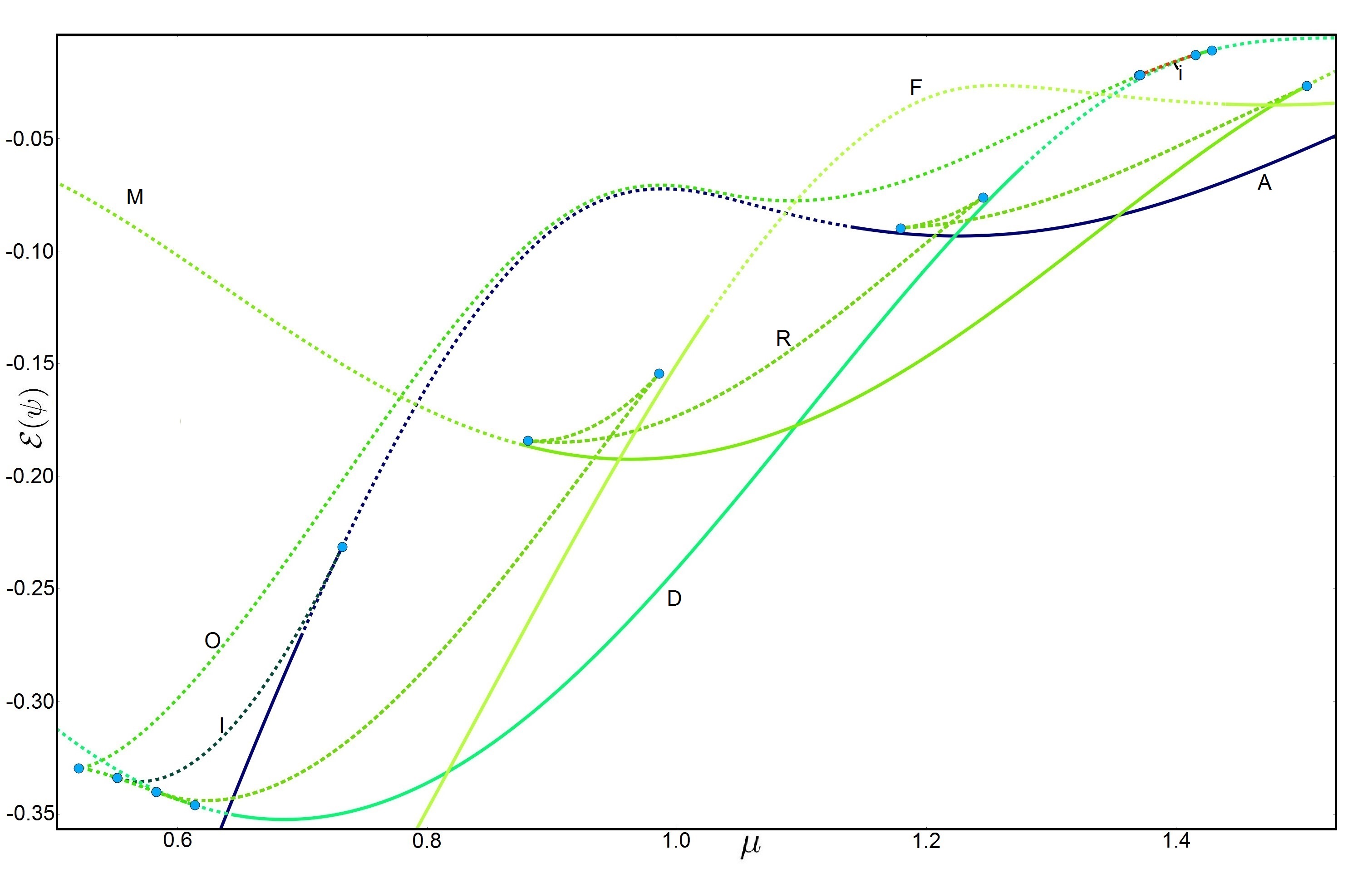}
		\caption{Branches I, O, R and i}
	\end{subfigure}
	\begin{subfigure}[t]{0.49\textwidth}
		\includegraphics[trim = 3mm 0mm 0mm 0mm,clip,scale=0.23]{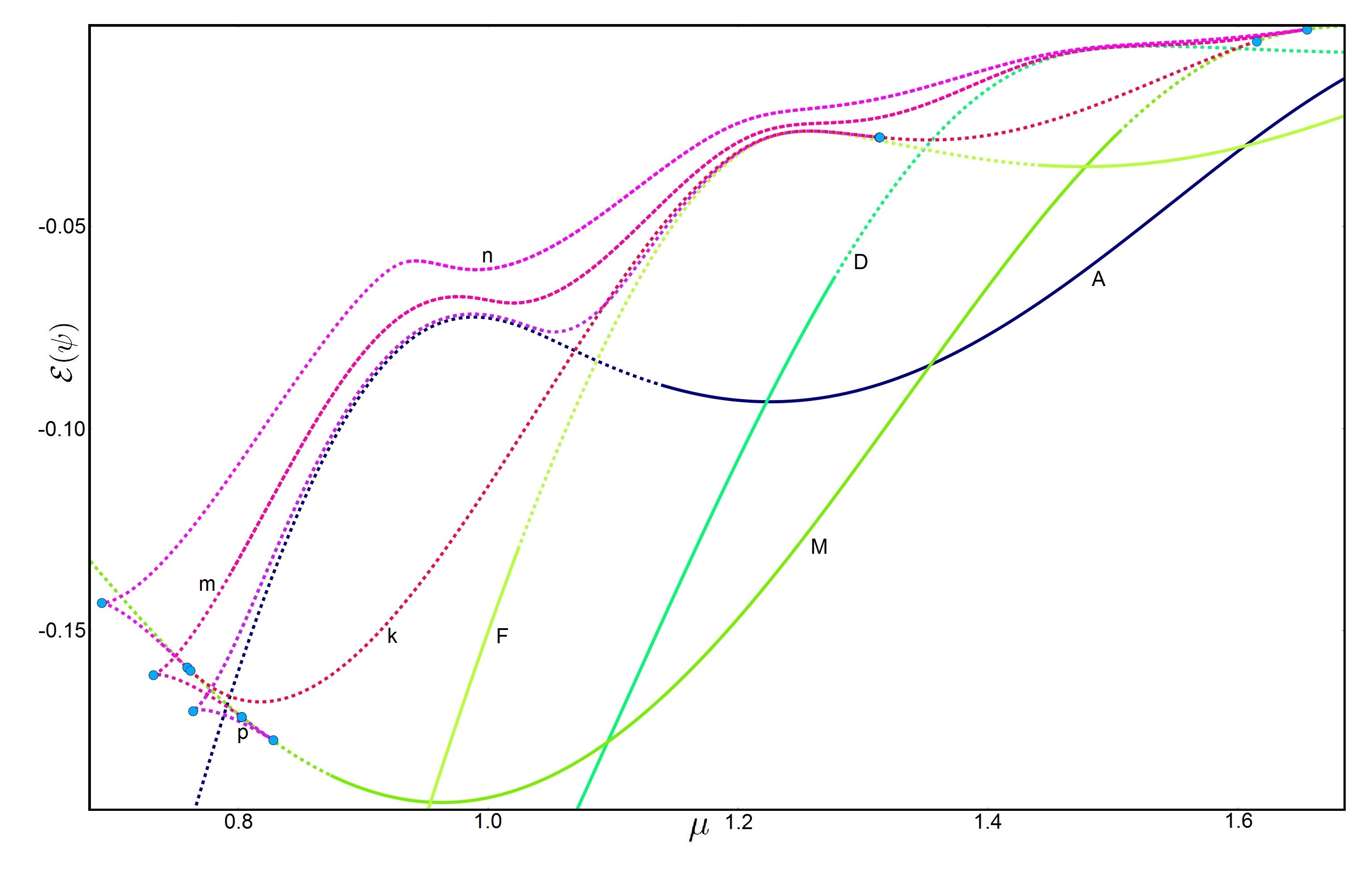}
		\caption{Branches k, m, n and p}
	\end{subfigure}
	\hfil
	\begin{subfigure}[t]{0.49\textwidth}
		\includegraphics[trim = -8mm 0mm 0mm 0mm,clip,scale=0.23]{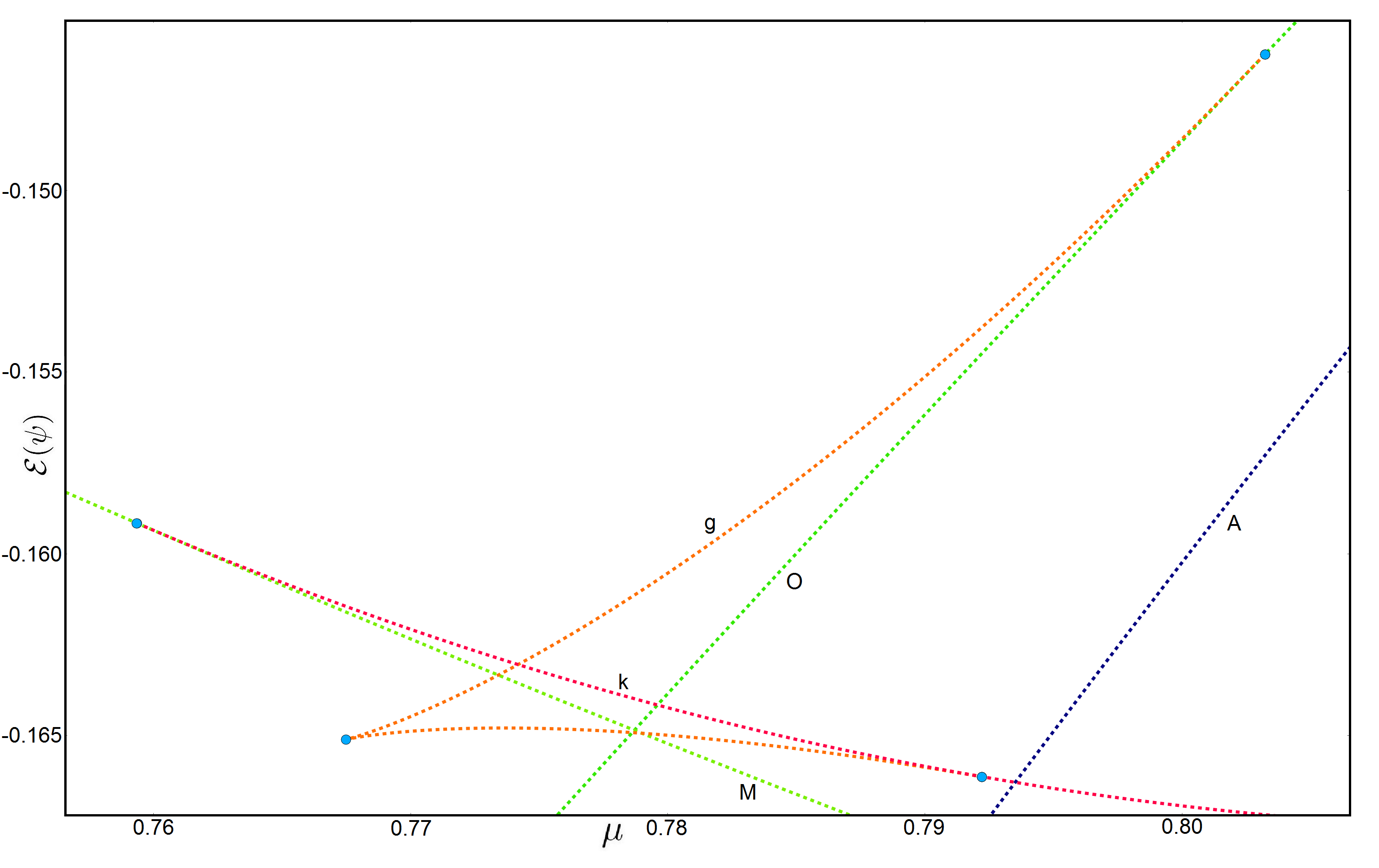}
		\caption{Branch g}
	\end{subfigure}
	\begin{subfigure}[t]{0.49\textwidth}
		\includegraphics[trim = 15mm 0mm 0mm 0mm,clip,scale=0.23]{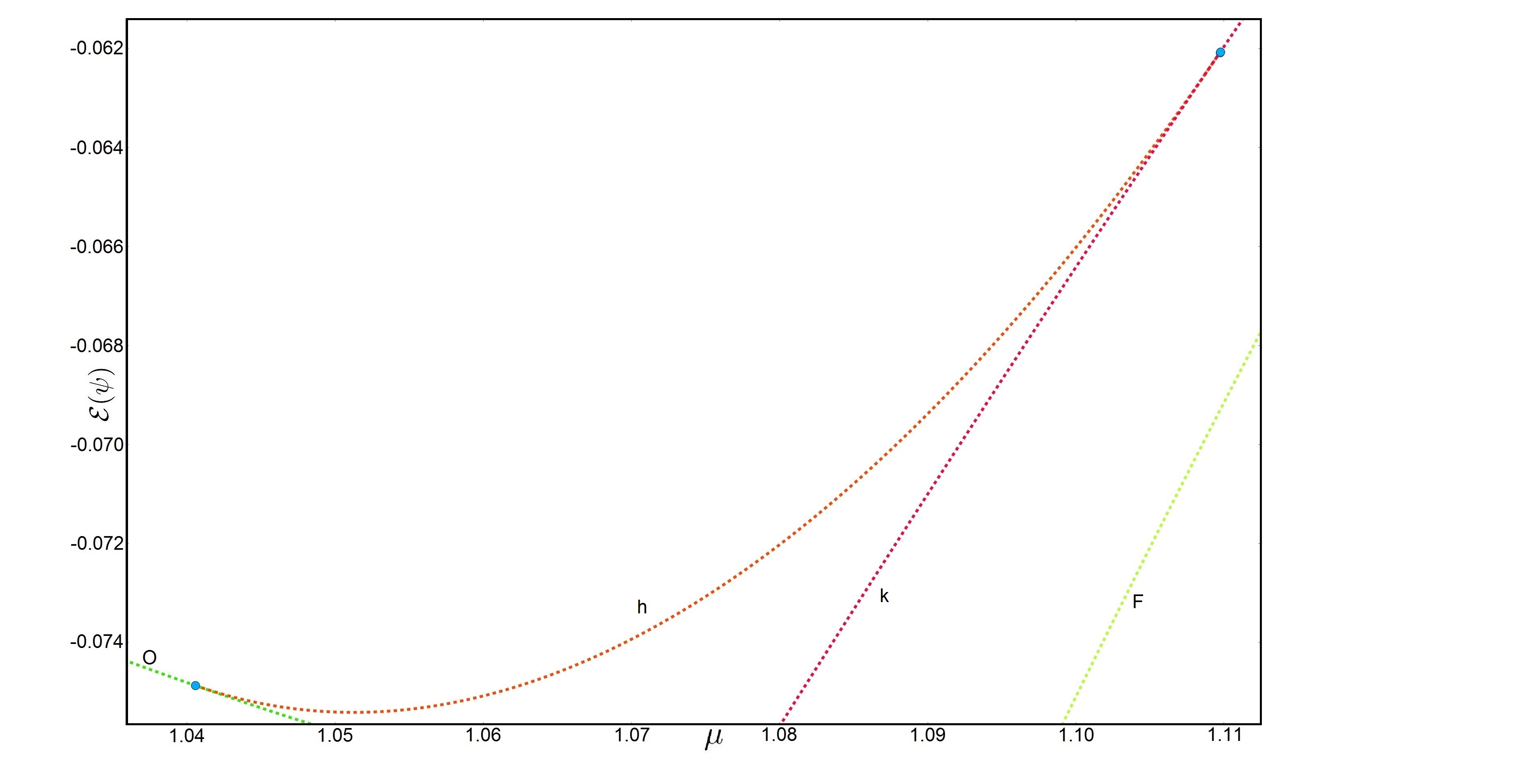}
		\caption{Branch h}
	\end{subfigure}
	\hfil
	\begin{subfigure}[t]{0.49\textwidth}
		\includegraphics[trim = -3mm 0mm 0mm 0mm,clip,scale=0.23]{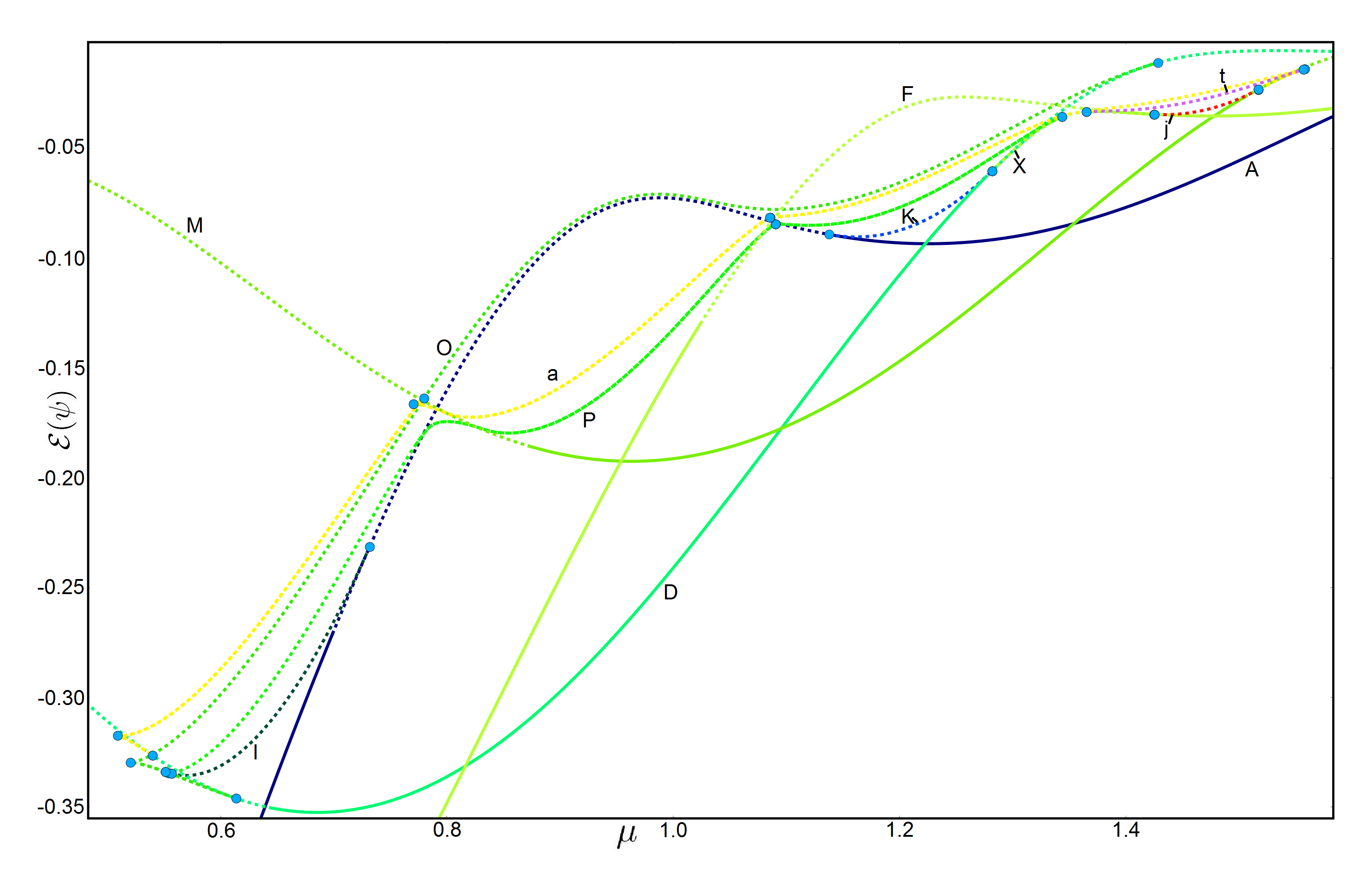}
		\caption{Branches I, K, O, P, X, a, j and t}
	\end{subfigure}
	\caption{Several close-ups of the bifurcation diagram of figure \ref{fig - main diagram square}, including connective solution branches.} \label{fig - bifurcation diagram square zooms}
\end{figure}

\begin{figure}
	\centering
	\includegraphics[trim = 0mm 0mm 0mm 0mm,clip,scale=0.4]{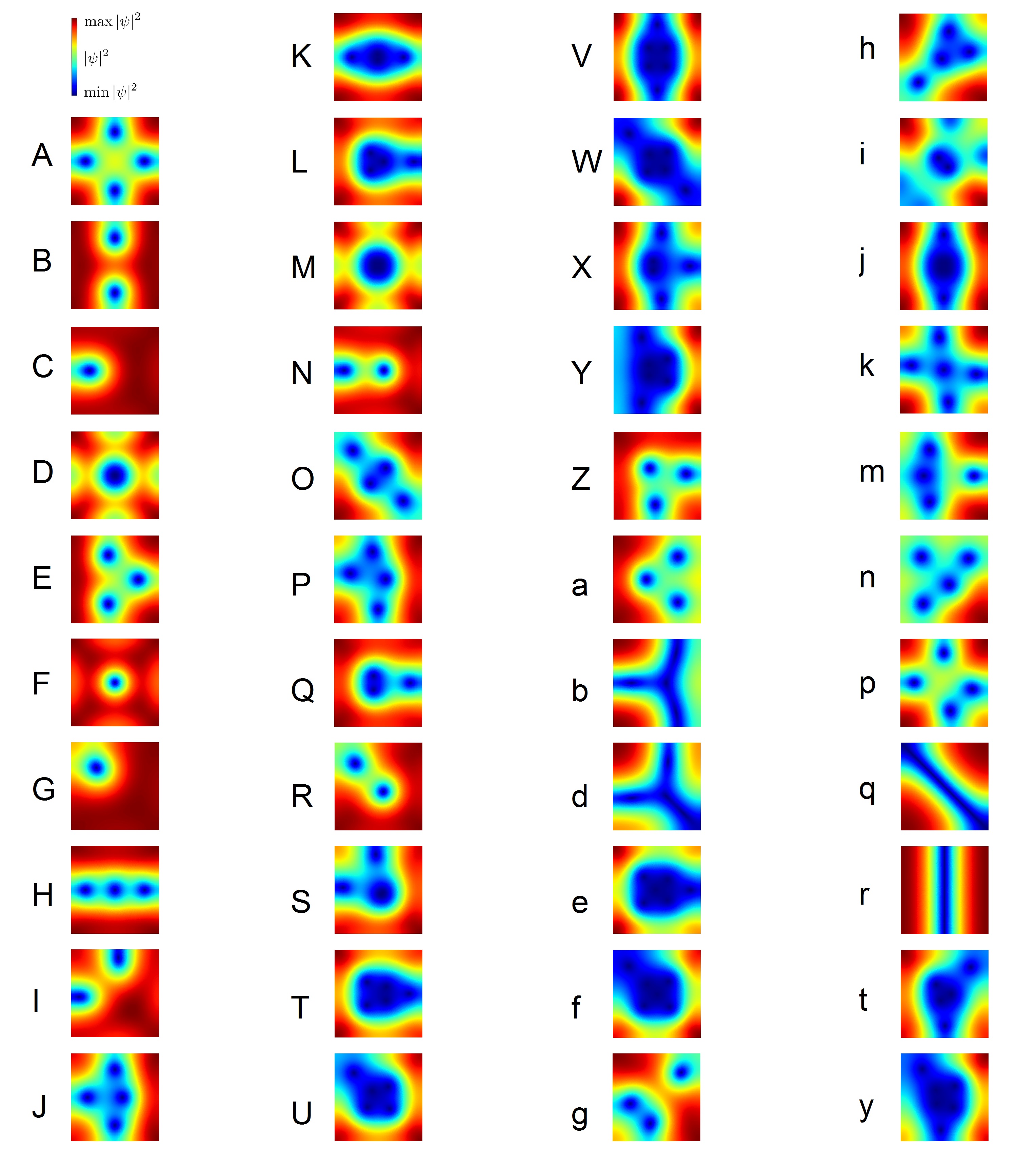}
	\caption{Representative solutions for the solution curves in the bifurcation diagram of a square-shaped material with edge 5.5.} \label{fig - representative square}
\end{figure}

\begin{table}[]
	\centering
	\caption{Approximate values for the applied magnetic fields strength ($\mu$) for the branch points displayed in figure \ref{fig - schematic square}.} \label{fig - table}
	\label{my-label}
	\begin{tabular}{|l|l|l|l|l|l|l|l|}
		\hline
		index & $\mu$ value & index & $\mu$ value & index & $\mu$ value & index & $\mu$ value \\ \hline
		1     & 0.6989      & 16    & 1.51736     & 31    & 1.02539     & 46    & 1.34396     \\ \hline
		2     & 0.7319      & 17    & 1.6149      & 32    & 1.4412      & 47    & 1.3655      \\ \hline
		3     & 1.13777     & 18    & 1.655       & 33    & 0.58275     & 48    & 1.425396    \\ \hline
		4     & 1.1423      & 19    & 0.09241    & 34    & 0.551370    & 49    & 1.9115      \\ \hline
		5     & 1.7176      & 20    & 0.54002     & 35    & 0.80322     & 50    & 1.425179    \\ \hline
		6     & 1.7437      & 21    & 0.646098    & 36    & 1.0406      & 51    & 1.7430      \\ \hline
		7     & 1.7959      & 22    & 0.61367     & 37    & 1.3712      & 52    & 1.7063      \\ \hline
		8     & 0.12905     & 23    & 1.281974    & 38    & 1.41573     & 53    & 1.75378     \\ \hline
		9     & 0.2591      & 24    & 1.27688     & 39    & 0.5566      & 54    & 0.7922136   \\ \hline
		10    & 0.34209     & 25    & 1.42875     & 40    & 0.8639      & 55    & 1.10976     \\ \hline
		11    & 0.7594      & 26    & 1.7453      & 41    & 0.6980      & 56    & 0.761674    \\ \hline
		12    & 0.8029      & 27    & 1.7055      & 42    & 0.6308      & 57    & 1.312923    \\ \hline
		13    & 0.8635      & 28    & 1.5824      & 43    & 1.13779     & 58    & 1.517199    \\ \hline
		14    & 0.875147    & 29    & 0.25285     & 44    & 1.02538     & 59    & 1.557099    \\ \hline
		15    & 1.5048      & 30    & 1.0240      & 45    & 1.28195     & 60    & 1.67462     \\ \hline
	\end{tabular}
\end{table}

A total of $60$ branch points were discovered during the continuation, their values are given in table \ref{fig - table}. Several of these only contained a $1$-dimensional kernel for which algorithm \ref{algorithm Mei} yielded the (single) new tangent direction. Other branch points corresponded to a kernel with dimension $2$. Fore these points the tangent directions were constructed by application of the equivariant branching lemma, described in the last part of section \ref{sectie tangents}.

Note that except for the four main branches (A, D, F and M), solution branch B contains a stable part as well: it is stable for magnetic field strength values between approximately $0.6308$ and $0.646098$, for solutions that consist of two vortices very close to the center, on the horizontal or vertical axis. Branch B is the only non-main branch to contain a stable part, the stability of this region has been verified with a Crank-Nicolson time step method as well.

A thorough description of branches A to M can be found in \cite{Schlomer_square}. Though multiple new solution branches have been discovered in the current paper, their discussion will be omitted.

\section{Conclusion} \label{sectie conclusie}
This paper presents multiple tools for the automatic exploration of
solution landscapes for the extreme type-II Ginzburg-Landau
equation. One of the main challenges is the detection and
determination of bifurcation points. We illustrate that this can be
handled by tracing Ritz values that indicate a nearby bifurcation
point. This triggers a precise determination by an extended system
\eqref{bifurcatiepunt stelsel} that is solved with a Newton-Krylov
algorithm.

The second main challenge is the construction of the tangent
directions of emerging solution branches from these bifurcation points. The
complexity of this problem depends on the dimension of the Jacobian's
kernel associated with the bifurcation. For $1$-dimensional kernels,
algorithm \ref{algorithm Mei} is applied, which is equivalent to
solving the algebraic branching equation. When this kernel is
$2$-dimensional, we provided an adapted version of algorithm
\ref{algorithm Mei}, resulting in algorithm \ref{algoritme n=2}, to
construct the tangent directions. Though this last algorithm is
robust, a faster alternative based on the equivariant branching lemma
is described as well. This last method can, however, only be applied
for $D_m$ ($m\geq 4$) invariant bifurcation points.

The methods described in the current paper are implemented in Python
as part of the package PyNCT. Contrary to other tools, we make use of
sparse linear algebra in the algorithms. The bifurcation diagrams
provided in section \ref{sectie resulaten} are generated by our
methods, showing their robustness. Though we specifically used the
extreme type-II Ginzburg-Landau equations for the derivation of the
algorithms, the generalization to other ($D_m$- or $C_m$-symmetric)
problems is straightforward. For the PyNCT extension general
algorithms were implemented.

In the future, we aim to apply the methods presented to more
complicated superconducting systems. For example, the Ginzburg-Landau
equations with space-dependent coefficients or the one that describes
the states of type 1.5 superconductors. A similar preconditioner as
the one presented in section \ref{sectie numerieke continuatie} will
likely lead to a fast solver, which can be used in combination with
the current algorithms to provide automatically explored bifurcation
diagrams.

Further work is required to determine robust stopping criteria for
determination of bifurcation points and tangent directions. Although,
for the examples presented in section \ref{sectie resulaten}, the
desired tolerances were always achieved, they require a new tuning
when applied to other samples with different discretization
levels. Robust stopping criteria, that work automatically for various
discretizations, would be helpful in the future.

Finally, an extension towards $3$ dimensional materials is one of the
next steps. We expect that the current methods for detection and
determination of bifurcation points provided are straightforward
applicable. However, the construction of the tangent directions to
emerging solution branches will require further efforts.  An extra
case will probably exist for $3$ dimensional materials, where the
Jacobian associated with the bifurcation has a $3$-dimensional
kernel. We believe that a similar analysis as for the derivation of
algorithm \ref{algoritme n=2} is possible for this case, and the
equivariant branching lemma might again be applicable in some cases as
well.

\appendix
\section{Proof of algorithm \ref{algoritme n=2}} \label{app proof}
A sketch of the derivation of algorithm \ref{algoritme n=2} is provided in this section. A more detailed one is available at author's request. First we proof the following lemma:
\begin{lemma} \label{proof - lemma}
	Consider a system of equations with unknowns $\alpha_1$ and $\alpha_2$:
	\begin{equation} \label{proof - lemma general equation}
		\begin{cases}
			\sum_{i=0}^{k}\alpha_1^i\alpha_2^{k-i}a_i + b\alpha_1+ e\alpha_2 = 0, \\
			\sum_{i=0}^{k}\alpha_1^i\alpha_2^{k-i}c_i + d\alpha_2 = 0
		\end{cases}
	\end{equation}
	with $k\geq2$, $a_0,\dots,a_k$, $c_0,\dots,c_k$, $e$ $\in\mathbb{R}$ and $b,d\in\mathbb{R}_0$. This system contains non-isolated solutions if and only if the relations
	\begin{align}
		\begin{aligned}
			& d a_0 = ec_0, \\
			&c_k = 0, \\
			&\forall i=1\dots k: d a_i = b c_{i-1} +ec_i
		\end{aligned} \label{proof - lemma relations}
	\end{align}
	hold. Moreover, in this case the system has a single isolated solution, given by $(\alpha_1,\alpha_2)=(0,0)$.
\end{lemma}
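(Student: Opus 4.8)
The plan is to collapse both the equivalence and the ``moreover'' clause onto a single polynomial identity. Write the two left-hand sides of \eqref{proof - lemma general equation} as $P=P_k+b\alpha_1+e\alpha_2$ and $Q=Q_k+d\alpha_2$, where $P_k=\sum_{i=0}^k a_i\alpha_1^i\alpha_2^{k-i}$ and $Q_k=\sum_{i=0}^k c_i\alpha_1^i\alpha_2^{k-i}$ are homogeneous of degree $k$, and set $\ell=b\alpha_1+e\alpha_2$. First I would expand
\[
\Phi:=\ell Q-d\alpha_2 P=\ell Q_k-d\alpha_2 P_k,
\]
the cancellation of the mixed degree-two term $d\alpha_2\ell$ showing that $\Phi$ is \emph{homogeneous of degree} $k+1$. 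Matching the coefficient of $\alpha_1^j\alpha_2^{k+1-j}$ in $\Phi$ against zero then reproduces, for $j=0$, for $1\le j\le k$, and for $j=k+1$ respectively, exactly $da_0=ec_0$, $da_j=bc_{j-1}+ec_j$, and (using $b\neq0$) $c_k=0$. Hence the relations \eqref{proof - lemma relations} are \emph{equivalent} to the single identity $\Phi\equiv0$, and it remains to show that the system has non-isolated solutions if and only if $\Phi\equiv0$. Throughout I would use the standard fact that, over $\mathbb{C}$, the system $P=Q=0$ has infinitely many solutions precisely when $\gcd(P,Q)$ is non-constant.

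For the implication $\Phi\equiv0\Rightarrow$ non-isolated solutions I would use that $\ell$ and $\alpha_2$ are coprime, being distinct linear forms because $b\neq0$. The identity $\ell Q=d\alpha_2 P$ with $d\neq0$ then forces $\ell\mid P$, say $P=\ell S$, and substituting back gives $Q=d\alpha_2 S$. Thus $S$, of degree $k-1\ge1$, is a common factor, so $\{S=0\}$ supplies the non-isolated solutions, while every solution with $S\neq0$ must satisfy $\ell=\alpha_2=0$, i.e.\ $(\alpha_1,\alpha_2)=(0,0)$. Comparing lowest-degree parts in $Q=d\alpha_2 S$ yields $S(0,0)=1\neq0$, so the origin lies off $\{S=0\}$; this shows simultaneously that $(0,0)$ is a solution and that it is the unique \emph{isolated} one, settling the moreover clause.

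For the converse I would turn the homogeneity of $\Phi$ into the main lever. If the system has non-isolated (hence infinitely many) solutions, $P$ and $Q$ admit a common non-constant factor $g$. The linear parts $\ell$ and $d\alpha_2$ of $P$ and $Q$ are linearly independent (again since $b\neq0$), so the Jacobian $\bigl(\begin{smallmatrix}b&e\\0&d\end{smallmatrix}\bigr)$ of $(P,Q)$ at the origin is invertible and $(0,0)$ is an isolated zero; consequently the curve $\{g=0\}$ cannot pass through it and $g(0,0)\neq0$. Since $g$ divides both $P$ and $Q$ it divides $\Phi$. But a polynomial $g$ with $g(0,0)\neq0$ and $\deg g=m\ge1$ cannot divide a nonzero homogeneous polynomial $H$: from $H=gw$ one gets (writing $\operatorname{ord}$ for the lowest degree occurring, which is additive on products) $\deg H=\operatorname{ord}(H)=\operatorname{ord}(w)\le\deg w=\deg H-m<\deg H$, a contradiction. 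As $\Phi$ is homogeneous this forces $\Phi\equiv0$, i.e.\ the relations hold.

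The step I expect to be genuinely decisive is the reformulation itself: once one spots that the combination $\ell Q-d\alpha_2 P$ is homogeneous, both directions reduce to the short order-versus-degree argument above, so the difficulty is algebraic bookkeeping rather than any hard estimate. The remaining care concerns the real field. The equivalence between having infinitely many common zeros and having a common factor, as well as the isolatedness of the origin, are cleanest over $\mathbb{C}$; the backward direction is unaffected, since infinitely many real solutions give infinitely many complex ones, but the forward direction requires that $\{S=0\}$ actually carry real points. This is automatic when $k-1$ is odd and otherwise follows from the mild nondegeneracy, satisfied in every system produced by algorithm \ref{algoritme n=2}, that the degree-$k$ parts $P_k,Q_k$ do not both vanish; the few purely linear degenerate configurations I would dispatch by direct inspection.
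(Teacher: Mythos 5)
Your proof is correct in substance and, on the necessity direction, takes a genuinely different and noticeably slicker route than the paper's. The paper also begins from Cramer's theorem to obtain factorizations $P=fg$, $Q=fh$ through a common component, but it then extracts the relations \eqref{proof - lemma relations} by an inductive, coefficient-by-coefficient comparison involving the unknown cofactor coefficients $g^{(m)}_i,h^{(m)}_i$ (statements \eqref{proof -step 3a}--\eqref{proof -step 3c}), followed by a resummation to get back to the $a_i,c_i$. You bypass all of that bookkeeping: packaging the relations into the single identity $\Phi=\ell Q-d\alpha_2 P\equiv0$, observing that $\Phi$ is homogeneous of degree $k+1$, and then noting that a common factor $g$ with $g(0,0)\neq0$ (forced by the invertible Jacobian $\bigl(\begin{smallmatrix}b&e\\0&d\end{smallmatrix}\bigr)$, i.e.\ by $bd\neq0$) cannot divide a nonzero homogeneous polynomial, by the order-versus-degree count. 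This makes the role of the hypotheses $b,d\in\mathbb{R}_0$ transparent and replaces the induction with three lines. Your sufficiency direction is essentially the paper's Step 2 in disguise: from $\Phi\equiv0$ and coprimality of $\ell$ and $\alpha_2$ you produce $P=\ell S$, $Q=d\alpha_2 S$, which is exactly the paper's explicit factor $S=\sum_{i=0}^{k-1}c_i\alpha_1^i\alpha_2^{k-1-i}+d$ (up to the constant $d$), and your argument that $S(0,0)=1$ makes the origin the unique isolated solution coincides with the paper's use of $d\neq0$.

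The one claim in your write-up that is actually wrong is the closing patch for the real field: for $k-1$ even, nonvanishing of the degree-$k$ parts $P_k,Q_k$ does \emph{not} guarantee that $\{S=0\}$ carries real points. Take $k=3$, $b=d=1$, $e=0$, $P=\alpha_1(\alpha_1^2+\alpha_2^2+1)$, $Q=\alpha_2(\alpha_1^2+\alpha_2^2+1)$: the relations \eqref{proof - lemma relations} hold and $P_3,Q_3\neq0$, yet $S=\alpha_1^2+\alpha_2^2+1$ is strictly positive, so the only real solution is the isolated origin and the ``if'' direction fails over $\mathbb{R}$. You should know that the paper's own proof elides exactly the same point --- it asserts that the solutions of $S=0$ ``form a continuous curve'' without checking that the real zero set is nonempty --- so your version is, if anything, more honest in flagging the issue; but your proposed remedy does not close it, and only the odd-$(k-1)$ case of your remark is sound. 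What \emph{can} be salvaged cheaply, and what neither text states, is the moreover clause over $\mathbb{R}$: since $S$ is a homogeneous polynomial plus the nonzero constant $d$, Euler's identity gives $\langle\nabla S(p),p\rangle=(k-1)(S(p)-d)\neq0$ at any real zero $p$, so every real point of $\{S=0\}$ is regular and hence non-isolated; emptiness of the real curve is therefore the only failure mode, and in the systems produced by algorithm \ref{algoritme n=2} it must be excluded by information beyond the nondegeneracy you invoke.
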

\begin{proof} \text{ } \\
	\textbf{Step 1}: Suppose that the system of equations contains non-isolated solutions. Since this system can be regarded as the intersection of two algebraic curves, Cramer's theorem \cite{Cramer1750} can be applied to show that the curves must be degenerate. The degeneracy implies that
	\begin{align}
		&\exists 1\leq l\leq k-1, f^{(m)}_i (m=0\dots l, i=0\dots m), g^{(m)}_i (m=0\dots k-l, i=0\dots m), \nonumber \\
		&\hspace{10pt}h^{(m)}_i (m=0\dots k-l, i=0\dots m): \nonumber \\
		&\sum_{i=0}^{k}a_i\alpha_1^i\alpha_2^{k-i}+b\alpha_1+e\alpha_2 = \left(\sum_{m=0}^{l}\sum_{i=0}^{m}f^{(m)}_i\alpha_1^i\alpha_2^{m-i}\right)\left(\sum_{m=0}^{k-l}\sum_{i=0}^{m}g^{(m)}_i\alpha_1^i\alpha_2^{m-i}\right), \label{proof - curve 1} \\
		&\sum_{i=0}^{k}c_i\alpha_1^i\alpha_2^{k-i}+d\alpha_2 = \left(\sum_{m=0}^{l}\sum_{i=0}^{m}f^{(m)}_i\alpha_1^i\alpha_2^{m-i}\right)\left(\sum_{m=0}^{k-l}\sum_{i=0}^{m}h^{(m)}_i\alpha_1^i\alpha_2^{m-i}\right). \label{proof - curve 2}
	\end{align}
	
	\noindent Comparing the coefficients of $\alpha_1$ and $\alpha_2$ in \eqref{proof - curve 1} and \eqref{proof - curve 2}, it follows that $f^{(0)}_0\neq0$. Similar comparisons of other coefficients can be combined with an induction argument to prove the statements
	\begin{align}
		&\forall m=0\dots k-l: dg^{(m)}_0 = eh^{(m)}_0, \label{proof -step 3a}\\
		&\forall m = 0\dots k-l: h^{(m)}_m=0, \label{proof -step 3b}\\
		&\forall m =1\dots k-l, i=1\dots m: dg^{(m)}_i = bh^{(m)}_{i-1}+eh^{(m)}_i. \label{proof -step 3c}
	\end{align}
	
	\noindent Using \eqref{proof -step 3a}, we get
	$$
	da_0 = df^{(l)}_0g^{(k-l)}_0 = ef^{(l)}_0h^{(k-l)}_0 = ec_0.
	$$
	Next, \eqref{proof -step 3b} implies that
	$$
	c_k = f^{(l)}_lh^{(k-l)}_{k-l} = 0.
	$$
	For ease of notation, define
	\begin{align*}
		&\forall l+1\leq i\leq k: f^{(l)}_i = 0, \\
		&\forall k-l+1\leq i\leq k: g^{(k-l)}_i = h^{(k-l)}_i = 0.
	\end{align*}
	Since $h^{(k-l)}_{k-l}=0$, we have
	$$
	\forall i=1\dots k: dg^{(k-l)}_i = bh^{(k-l)}_{i-1}+eh^{(k-l)}_i.
	$$
	Together with \eqref{proof -step 3a},\eqref{proof -step 3b} and \eqref{proof -step 3c}, this implies ($\forall i=1\dots k$)
	\begin{align*}
		da_i &= d\sum_{j=0}^{i}f^{(l)}_jg^{(k-l)}_{i-j} = b\sum_{j=0}^{i-1}f^{(l)}_jh^{(k-l)}_{i-1-j} + e\sum_{j=0}^{i}f^{(l)}_jh^{(k-l)}_{i-j} = bc_{i-1}+ec_i.
	\end{align*}
	
	\noindent \textbf{Step 2}: We have yet to show that
	\begin{align*}
		&da_0=ec_0, c_k=0, \forall i=1\dots k: da_i=bc_{i-1}+ec_i \\
		&\Rightarrow \text{The system of equations \eqref{proof - lemma general equation} contains non-isolated solutions and} \\
		& \hspace{20pt}\text{ a single isolated solution given by $(\alpha_1,\alpha_2)=(0,0)$}.
	\end{align*}
	We first rewrite the equations for the algebraic curves. The one for the first curve becomes
	\begin{align*}
		&\sum_{i=0}^{k}a_i\alpha_1^i\alpha_2^{k-i}+b\alpha_1+e\alpha_2 = 0
		\iff \left(\sum_{i=0}^{k-1}c_i\alpha_1^i\alpha_2^{k-1-i}+d\right)(b\alpha_1+e\alpha_2) = 0.
	\end{align*}
	The equation for the second curve can be rewritten as
	\begin{align*}
		\sum_{i=0}^{k}c_i\alpha_1^i\alpha_2^{k-i}+d\alpha_2 = 0
		\iff \left(\sum_{i=0}^{k-1}c_i\alpha_1^i\alpha_2^{k-1-i}+d\right)\alpha_2 = 0.
	\end{align*}
	Solutions $(\alpha_1,\alpha_2)$ of system \eqref{proof - lemma general equation} satisfy
	\begin{displaymath}
		b\alpha_1+e\alpha_2=0 \text{ and } \alpha_2=0 \text{ or } \sum_{i=0}^{k-1}c_i\alpha_1^i\alpha_2^{k-1-i}+d=0.
	\end{displaymath}
	The equation $\sum_{i=0}^{k-1}c_i\alpha_1^i\alpha_2^{k-1-i}+d=0$ again describes an algebraic curve. The solutions that satisfy this equation hence form a continuous curve and are non-isolated. The solution for which
	$$
	b\alpha_1+e\alpha_2=0 \text{ and } \alpha_2=0
	$$
	satisfies $(\alpha_1,\alpha_2)=(0,0)$. This solution is isolated since $(\alpha_1,\alpha_2)=(0,0)$ does not belong to the curve described by $\sum_{i=0}^{k-1}c_i\alpha_1^i\alpha_2^{k-1-i}+d=0$ (because $d\neq0$).
\end{proof}

The three equations 
\begin{align}
	&\mathcal{J}_\psi^{(b)}w_k+\mathcal{J}_\mu^{(b)}\beta_k = -r_k, \label{proof - vergelijkingen van paper Mei 1} \\
	&\forall j=1,2: \langle\phi_j,w_k\rangle_\mathbb{R}=0, \label{proof - vergelijkingen van paper Mei 2} \\
	&\forall j=1,2: \langle\phi_j^*,r_k\rangle_\mathbb{R}=0, \label{proof - vergelijkingen van paper Mei 3}
\end{align}
with $k=1\dots l$, are crucial for the derivation of algorithm \ref{algoritme n=2}. These equations were derived in section \ref{sectie tangents}.
The terms $w_k$ and $\beta_k$ appear in the order $l$ Taylor expansion of $\psi(s)$ and $\mu(s)$ \eqref{Taylor expansie psi en mu}. $\phi_1$ and $\phi_2$ are null vectors of $\mathcal{J}_\psi^{(b)}$, $\phi_1^*$ and $\phi_2^*$ of $\mathcal{J}_\psi^{(b)^*}$. The terms $r_k$ are given by the formula
\begin{align}
	&r_k = \sum_{j=2}^{k}\frac{1}{j!}\hspace{20pt}\sum_{\mathclap{\substack{\sum_{p=1}^{j}k_p=k \\ \forall p=1,\dots, j: \\ k_p\in\{1,\dots, k-j+1\}}}}\hspace{10pt}D^j\mathcal{GL}^{(b)}
	\begin{pmatrix}
		x_{k_1} \\
		\beta_{k_1}
	\end{pmatrix}\begin{pmatrix}
		x_{k_2} \\
		\beta_{k_2}
	\end{pmatrix}\dots\begin{pmatrix}
		x_{k_j} \\
		\beta_{k_j}
	\end{pmatrix} \label{proof - formule rk}\\
	&\text{with } x_1=\sum_{i=1}^{2}\alpha_i\phi_i+w_1 \qquad \forall p=2,\dots,k-1: x_p=w_p. \nonumber
\end{align}
The terms $v^{(0)}$, $t_1$, $t_2$, $t_3$, $b^{(1)}$, $b^{(2)}$, $y_i^{(k)}$, $a_i^{(k,1)}$, $a_i^{(k,2)}$, $\kappa_i^{(k)}$, $z_i^{(k)}$ and $q_i^{(k)}$ ($k=1\dots l, i=0\dots k$) that appear in the proof are defined as in algorithm \ref{algoritme n=2}.

\begin{proof}
	\textbf{Initial}: Together with $r_1=0$, equation \eqref{proof - vergelijkingen van paper Mei 1} implies
	\begin{equation}
		\mathcal{J}_\psi^{(b)}w_1 = -\mathcal{J}_\mu^{(b)}\beta_1.
	\end{equation}
	Define $v^{(0)}$ as
	\begin{displaymath}
		\mathcal{J}_\psi^{(b)}v^{(0)} = -\mathcal{J}_\mu^{(b)} \qquad v^{(0)}\in\text{im}\left(\mathcal{J}_\psi^{(b)^*}\right),
	\end{displaymath}
	this leads to the following expressions for $w_1$ and $x_1$:
	\begin{align}
		&w_1 = \beta_1v^{(0)}, \\
		&x_1= \alpha_1\phi_1+\alpha_2\phi_2+\beta_1v^{(0)}. \label{proof - uitdrukking x1}
	\end{align}
	Note that the condition $v^{(0)}\in\text{im}\left(\mathcal{J}_\psi^{(b)^*}\right)$ is necessary to satisfy \eqref{proof - vergelijkingen van paper Mei 2}. Formula \eqref{proof - formule rk} for $k=2$ yields
	\begin{displaymath}
		r_2 = \frac{1}{2}D^2\mathcal{GL}^{(b)}\begin{pmatrix}
			x_1 \\
			\beta_1
		\end{pmatrix}\begin{pmatrix}
			x_1 \\
			\beta_1
		\end{pmatrix}.
	\end{displaymath}
	Substitution of $x_1$ and bilinearity of the Hessian lead to the expression
	\begin{equation}
		r_2 = \sum_{i=0}^{2}\alpha_1^i\alpha_2^{2-i}y_i^{(2)} + \beta_1\sum_{i=1}^{2}\alpha_i t_i + \beta_1^2t_3, \label{proof - uitdrukking r2}
	\end{equation}
	with $y_0^{(2)}$, $y_1^{(2)}$, $y_2^{(2)}$, $t_1$, $t_2$ and $t_3$ defined in algorithm \ref{algoritme n=2}.
	Substitution of this expression in equation \eqref{proof - vergelijkingen van paper Mei 3} leads to the system
	\begin{equation}
		\begin{cases}
			\sum_{i=0}^{2}\alpha_1^i\alpha_2^{2-i}\langle \phi_1^*,y_i^{(2)}\rangle + \beta_1\sum_{i=1}^{2}\alpha_i\langle \phi_1^*,t_i\rangle + \beta_1^2 \langle \phi_1^*,t_3\rangle = 0, \\
			\sum_{i=0}^{2}\alpha_1^i\alpha_2^{2-i}\langle \phi_2^*,y_i^{(2)}\rangle + \beta_1\sum_{i=1}^{2}\alpha_i\langle \phi_2^*,t_i\rangle + \beta_1^2 \langle \phi_2^*,t_3\rangle = 0.
		\end{cases} \label{proof - initial reduced system 1}
	\end{equation}
	Together with the fundamental theorem of linear algebra, assumptions \eqref{extra aanname in LS reductie 1}, \eqref{extra aanname in LS reductie 1.5}, \eqref{extra aanname in LS reductie 1.8} and \eqref{extra aanname in LS reductie 2} imply that we can choose $\phi_1^*$ and $\phi_2^*$ such that
	\begin{align*}
		&\langle\phi_1^*,t_1\rangle \neq 0, &&  && \langle\phi_1^*,t_3\rangle = 0, \\
		&\langle\phi_2^*,t_1\rangle = 0, && \langle\phi_2^*,t_2\rangle \neq 0, && \langle\phi_2^*,t_3\rangle = 0. \\
	\end{align*}
	With these choices, \eqref{proof - initial reduced system 1} becomes
	\begin{equation}
		\begin{cases}
			\sum_{i=0}^{2}\alpha_1^i\alpha_2^{2-i}a_i^{(2,1)} +\beta_{1}\alpha_1b^{(1)} +\beta_{1}\alpha_2b^{(3)} = 0, \\
			\sum_{i=0}^{2}\alpha_1^i\alpha_2^{2-i}a_i^{(2,2)} +\beta_{1}\alpha_2b^{(2)} = 0,
		\end{cases} \label{proof - initial reduced system 2}
	\end{equation}
	where $b^{(1)}$, $b^{(2)}$, $b^{(3)}$, $a_0^{(2,1)}$, $a_1^{(2,1)}$, $a_2^{(2,1)}$, $a_0^{(2,2)}$, $a_1^{(2,2)}$ and $a_2^{(2,2)}$ are defined as in algorithm \ref{algoritme n=2}. Lemma \ref{proof - lemma} can now be applied to check this system for non-isolated solutions. If only isolated solutions exist, the algorithm is stopped. In the other case, we have 
	\begin{align*}
		b^{(2)}a_0^{(2,1)} = b^{(3)}a_0^{(2,2)}, && a_2^{(2,2)}=0 &&\forall i=1;2: b^{(2)}a_i^{(2,1)} = b^{(1)}a_{i-1}^{(2,2)} + b^{(3)}a_i^{(2,2)}
	\end{align*}
	and continue by solving \eqref{proof - initial reduced system 2} for $\beta_1$:
	\begin{align*}
		\sum_{i=0}^{1}\alpha_1^i\alpha_2^{1-i}a_i^{(2,2)} + \beta_1b^{(2)} = 0 \iff \beta_1 = \sum_{i=0}^{1}\alpha_1^i\alpha_2^{1-i}\kappa_i^{(1)},
	\end{align*}
	with $\kappa_0^{(1)}$ and $\kappa_1^{(1)}$ defined in algorithm \ref{algoritme n=2}. Substitution of $\beta_1$ in \eqref{proof - uitdrukking r2} and \eqref{proof - uitdrukking x1} eventually leads to the following expressions for $r_2$ and $x_1$:
	\begin{align*}
		&r_2 = \sum_{i=0}^{2}\alpha_1^i\alpha_2^{2-i}z_i^{(2)}, && x_1 = \sum_{i=0}^{1}\alpha_1^i\alpha_2^{1-i}q_i^{(1)},
	\end{align*}
	with $z_0^{(2)}$, $z_1^{(2)}$, $z_2^{(2)}$, $q_0^{(1)}$, $q_1^{(1)}$ defined in algorithm \ref{algoritme n=2}.
	
	\textbf{Iteration}:
	Let $k\geq 3$ and assume terms $z_i^{(k-1)}\in\mathbb{C}^n$ (for $i=0\dots k-1$), $q_i^{(j)}\in\mathbb{C}^n$ (for $j=1\dots k-2,i=0\dots j$) and $\kappa_i^{(j)}\in\mathbb{R}$ (for $j=1\dots k-2,i=0\dots j$) have been derived such that
	\begin{align}
		&r_{k-1} = \sum_{i=0}^{k-1}\alpha_1^i\alpha_2^{k-1-i}z_i^{(k-1)}, \label{proof - induction hypothesis 1} \\
		&\forall j=1\dots k-2: x_j = \sum_{i=0}^{j}\alpha_1^i\alpha_2^{j-i}q_i^{(j)}, \label{proof - induction hypothesis 2} \\
		&\forall j=1\dots k-2: \beta_j = \sum_{i=0}^{j}\alpha_1^i\alpha_2^{j-i}\kappa_i^{(j)}. \label{proof - induction hypothesis 3}
	\end{align}
	Equation \eqref{proof - vergelijkingen van paper Mei 1} implies
	\begin{equation}
		\mathcal{J}_\psi^{(b)}w_{k-1} = -\sum_{i=0}^{k-1}\alpha_1^i\alpha_2^{k-1-i}z_i^{(k-1)} -\mathcal{J}_\mu^{(b)}\beta_{k-1}.
	\end{equation}
	Define $v_0^{(k-1)}$\dots $v_{k-1}^{(k-1)}$ as
	\begin{displaymath}
		\forall i=0\dots k-1: \mathcal{J}_\psi^{(b)}v_i^{(k-1)} = -z_i^{(k-1)}  \qquad v_i^{(k-1)}\in\text{im}\left(\mathcal{J}_\psi^{(b)^*}\right),
	\end{displaymath}
	this leads to the following expression for $w_{k-1}$ ($=x_{k-1}$):
	\begin{align}
		&x_{k-1} = w_{k-1} = \sum_{i=0}^{k-1}\alpha_1^i\alpha_2^{k-1-i}v_i^{(k-1)} + \beta_{k-1}v^{(0)}. \label{proof - uitdrukking xk}
	\end{align}
	Rewriting formula \eqref{proof - formule rk} after substitution of \eqref{proof - induction hypothesis 2}, \eqref{proof - induction hypothesis 3} and \eqref{proof - uitdrukking xk}, and defining the terms $y_0^{(k)}$,\dots,$y_k^{(k)}$ as in algorithm \ref{algoritme n=2}, one can show that
	\begin{align*}
		r_k =& \sum_{i=0}^{k}\alpha_1^i\alpha_2^{k-i}y_i^{(k)} +\beta_{k-1}\alpha_1\mathcal{H}^{(b)}\begin{pmatrix}
			q_1^{(1)} \\
			\kappa_1^{(1)}
		\end{pmatrix} \begin{pmatrix}
			v^{(0)} \\
			1
		\end{pmatrix} + \beta_{k-1}\alpha_2\mathcal{H}^{(b)}\begin{pmatrix}
			q_0^{(1)} \\
			\kappa_0^{(1)}
		\end{pmatrix} \begin{pmatrix}
			v^{(0)} \\
			1
		\end{pmatrix}.
	\end{align*}
	The coefficients of $\beta_{k-1}\alpha_1$ and $\beta_{k-1}\alpha_2$ can respectively be rewritten as $t_1+2\kappa_1^{(1)}t_3$ and $t_2+2\kappa_0^{(1)}t_3$. This leads to the following expression for $r_k$:
	\begin{equation}
		r_k = \sum_{i=0}^{k}\alpha_1^i\alpha_2^{k-i}y_i^{(k)} + \beta_{k-1}\alpha_1\left(t_1 + 2\kappa_1^{(1)}t_3\right) + \beta_{k-1}\alpha_2\left(t_2 + 2\kappa_0^{(1)}t_3\right). \label{proof - uitdrukking rk}
	\end{equation}
	Substitution of this expression in equation \eqref{proof - vergelijkingen van paper Mei 3} leads to the system
	\begin{equation}\begin{cases}
			\sum_{i=0}^{k}\alpha_1^i\alpha_2^{k-i}\langle\phi_1^*,y_i^{(k)}\rangle +\beta_{k-1}\alpha_1\left(\langle\phi_1^*,t_1\rangle+2\kappa_1^{(1)}\langle\phi_1^*,t_3\rangle\right) \\
			\hspace{150pt}+\beta_{k-1}\alpha_2\left(\langle\phi_1^*,t_2\rangle+2\kappa_0^{(1)}\langle\phi_1^*,t_3\rangle\right) = 0, \\
			\sum_{i=0}^{k}\alpha_1^i\alpha_2^{k-i}\langle\phi_2^*,y_i^{(k)}\rangle +\beta_{k-1}\alpha_1\left(\langle\phi_2^*,t_1\rangle+2\kappa_1^{(1)}\langle\phi_2^*,t_3\rangle\right) \\
			\hspace{150pt}+\beta_{k-1}\alpha_2\left(\langle\phi_2^*,t_2\rangle+2\kappa_0^{(1)}\langle\phi_2^*,t_3\rangle\right) = 0.
		\end{cases} \label{proof - iteration reduced system 1}
	\end{equation}
	Remember that
	\begin{align*}
		&b^{(1)} = \langle\phi_1^*,t_1\rangle \neq 0, && b^{(3)} = \langle\phi_1^*,t_2\rangle, && \langle\phi_1^*,t_3\rangle = 0, \\
		&\langle\phi_2^*,t_1\rangle = 0, && b^{(2)}=\langle\phi_2^*,t_2\rangle\neq0, && \langle\phi_2^*,t_3\rangle = 0. \\
	\end{align*}
	Equation \eqref{proof - iteration reduced system 1} hence becomes
	\begin{equation}
		\begin{cases}
			\sum_{i=0}^{k}\alpha_1^i\alpha_2^{k-i}a_i^{(k,1)} +\beta_{k-1}\alpha_1b^{(1)} + \beta_{k-1}\alpha_2b^{(3)} = 0, \\
			\sum_{i=0}^{k}\alpha_1^i\alpha_2^{k-i}a_i^{(k,2)} +\beta_{k-1}\alpha_2b^{(2)} = 0,
		\end{cases} \label{proof - iteration reduced system 2}
	\end{equation}
	where $a_0^{(k,1)}$,\dots, $a_k^{(k,1)}$ and $a_0^{(k,2)}$,\dots, $a_k^{(k,2)}$ are defined as in algorithm \ref{algoritme n=2}. Lemma \ref{proof - lemma} can again be applied to check this system for non-isolated solutions. If only isolated solutions exist, the algorithm is stopped. In the other case, we have
	\begin{align*}
		b^{(2)}a_0^{(k,1)} = b^{(3)}a_0^{(k,2)}, && a_k^{(k,2)}=0, &&\forall i=1,\dots,k: b^{(2)}a_i^{(k,1)} = b^{(1)}a_{i-1}^{(k,2)} + b^{(3)}a_i^{(k,2)}
	\end{align*}
	and continue by solving \eqref{proof - iteration reduced system 2} for $\beta_{k-1}$:
	\begin{align*}
		\sum_{i=0}^{k-1}\alpha_1^i\alpha_2^{k-1-i}a_i^{(k,2)} + \beta_{k-1}b^{(2)} = 0 \iff \beta_{k-1} = \sum_{i=0}^{k-1}\alpha_1^i\alpha_2^{k-1-i}\kappa_i^{(k-1)},
	\end{align*}
	with $\kappa_0^{(k-1)}$,\dots, $\kappa_{k-1}^{(k-1)}$ defined in algorithm \ref{algoritme n=2}. Substitution of $\beta_{k-1}$ in \eqref{proof - uitdrukking rk} and \eqref{proof - uitdrukking xk} eventually leads to the following expressions for $r_k$ and $x_{k-1}$:
	\begin{align*}
		&r_k = \sum_{i=0}^{k}\alpha_1^i\alpha_2^{k-i}z_i^{(k)}, && x_{k-1} = w_{k-1} = \sum_{i=0}^{k-1}\alpha_1^i\alpha_2^{k-1-i}q_i^{(k-1)},
	\end{align*}
	with $z_0^{(k)}$, \dots, $z_k^{(k)}$ and $q_0^{(k-1)}$,\dots, $q_{k-1}^{(k-1)}$ defined in algorithm \ref{algoritme n=2}. With the new expressions for $r_{k}$, $x_{k-1}$ and $\beta_{k-1}$ now available, the iteration can be continued for $k\rightarrow k+1$.
\end{proof}

\section{Calculation of the $y_i^{(k)}$ terms in algorithm \ref{algoritme n=2}} \label{app calc}

\noindent An important step in the execution of algorithm \ref{algoritme n=2} is the calculation of the terms

\begin{align*}
	&y_i^{(k)} = \sum_{\mathclap{\substack{i_1+i_2=i \\ i_1\in\{0;1\} \\ i_2\in\{0,\dots, k-1\}}}} \mathcal{H}^{(b)}
	\begin{pmatrix}
		q^{(1)}_{i_1} \\
		\kappa^{(1)}_{i_1}
	\end{pmatrix}\begin{pmatrix}
		v^{(k-1)}_{i_2} \\
		0
	\end{pmatrix} +\frac{1}{2} \hspace{20pt} \sum_{\mathclap{\substack{k_1+k_2=k \\ k_1,k_2\in\{2,\dots, k-2\}}}} \hspace{40pt} \sum_{\mathclap{\substack{i_1+i_2=i \\ i_1\in\{0,\dots, k_1\} \\ i_2\in\{0,\dots, k_2\}}}} \mathcal{H}^{(b)}
	\begin{pmatrix}
		q^{(k_1)}_{i_1} \\
		\kappa^{(k_1)}_{i_1}
	\end{pmatrix}\begin{pmatrix}
		q^{(k_2)}_{i_2} \\
		\kappa^{(k_2)}_{i_2}
	\end{pmatrix} \\
	&\hspace{30pt}+\sum_{j=3}^{k}\frac{1}{j!} \hspace{20pt} \sum_{\mathclap{\substack{\sum_{p=1}^{j}k_p=k  \\ \forall p=1,\dots, j: \\ k_p\in\{1,\dots, k-j+1\}}}} \hspace{40pt} \sum_{\mathclap{\substack{\sum_{p=1}^{j}i_p=i  \\ \forall p=1\dots j: \\ i_p\in\{0,\dots, k_p\}}}} D^j\mathcal{GL}^{(b)}
	\begin{pmatrix}
		q^{(k_1)}_{i_1} \\
		\kappa^{(k_1)}_{i_1}
	\end{pmatrix}\begin{pmatrix}
		q^{(k_2)}_{i_2} \\
		\kappa^{(k_2)}_{i_2}
	\end{pmatrix}\dots\begin{pmatrix}
		q^{(k_j)}_{i_j} \\
		\kappa^{(k_j)}_{i_j}
	\end{pmatrix}
\end{align*}
for a given $k\geq 3$, $0\leq i \leq k$. The terms $q^{(m)}_l$, $\kappa^{(m)}_l$ and $v^{(k-1)}_l$ ($\forall m=1,\dots,k-2$, $l=0,\dots,m$) have been calculated in previous steps of the algorithm.
In this appendix we derive an efficient way to calculate these $y_i^{(k)}$ terms. For simplicity, we define ($\forall m=1,\dots,k-2$)
$$
q_{-1}^{(m)} = q_{m+1}^{(m)} = 0, \qquad \kappa_{-1}^{(m)} = \kappa_{m+1}^{(m)} = 0, \qquad v_{-1}^{(k-1)} = v_{k}^{(k-1)} = 0.
$$
Given a choice for $k$ and $j$, algorithm \ref{algoritme k indices} constructs a list $\mathcal{K}$ of possible index sets $K=(k_1,\dots,k_j)$ that satisfy
\begin{equation} \label{app - k indices}
	\sum_{p=1}^{j}k_p = k \qquad \forall p=1,\dots,j: k_p\in\{1,\dots,k-j+1\}
\end{equation}
The list is constructed in such a way that no two index sets are permutations of one another.

\begin{algorithm}
	\caption{Calculation of list that satisfies \eqref{app - k indices}} 
	\label{algoritme k indices}
	\textnormal{INPUT}: $k,j\in\mathbb{N}$ \\
	\textnormal{OUTPUT}: List $\mathcal{K}$ of index sets that satisfy \eqref{app - k indices}
	\begin{tabbing}
		\textnormal{1}: \hspace{2pt}\= Initialize the list of possible index sets: $\mathcal{K} = [\text{ }]$. \\
		\textnormal{2}: \> Initialize an index set: $K = (\text{ })$ \\
		\textnormal{3}: \> $m = length(K)$ \\
		\textnormal{4}: \> If $m<j-1$: \\
		\textnormal{5}: \> \hspace{10pt} \= If $m\neq 0$: \\
		\textnormal{6}: \> \> \hspace{10pt} \= $p_0=K_m$ \\
		\textnormal{7}: \> \> Else: \\
		\textnormal{8}: \> \> \> $p_0=1$ \\
		\textnormal{9}: \> \> End If \\
		\textnormal{10}: \> \> For $p=p_0,\dots,k-\sum_{l=1}^{m}K_l-j+m+1$: \\
		\textnormal{11}: \> \> \> Create a copy $K'$ of the set $K$. \\
		\textnormal{12}: \> \> \> $K' = (K', p)$ \\
		\textnormal{13}: \> \> \> Go to step $3$, using $K'$ instead of $K$. \\
		\textnormal{14}: \> \> End For \\
		\textnormal{15}: \> Else: \\
		\textnormal{16}: \> \> $p = k-\sum_{l=1}^{m}K_l$ \\
		\textnormal{17}: \> \> If $K_{j-1}\leq p$: \\
		\textnormal{18}: \> \> \> $K = (K,p)$ \\
		\textnormal{19}: \> \> \> Update list: $\mathcal{K} = [\mathcal{K},K]$ \\
		\textnormal{20}: \> \> End If \\
		\textnormal{21}: \> End If
	\end{tabbing}
\end{algorithm}

Algorithm \ref{algoritme i indices} is similar to algorithm \ref{algoritme k indices}. Given a choice for $j$ and $i$, and a set of indices $K=(k_1,\dots,k_j)$, it constructs a list $\mathcal{I}$ of index sets $I=(i_1,\dots,i_j)$ that satisfy
\begin{equation} \label{app - i indices}
	\sum_{p=1}^{j}i_p = i \qquad \forall p=1,\dots,j: i_p\in\{0,\dots,k_p\}
\end{equation}
This time the list is constructed such that no two sets of pairs of the form ($(k_1,i_1),\dots,(k_j,i_j)$) are permutations.

\begin{algorithm}
	\caption{Calculation of list that satisfies \eqref{app - i indices}} 
	\label{algoritme i indices}
	\textnormal{INPUT}: $i,j\in\mathbb{N}$, index set $K$ that satisfies \eqref{app - k indices}\\
	\textnormal{OUTPUT}: List $\mathcal{I}$ of index sets that satisfy \eqref{app - i indices}
	\begin{tabbing}
		\textnormal{1}: \hspace{2pt}\= Initialize the list of possible index sets: $\mathcal{I} = [\text{ }]$. \\
		\textnormal{2}: \> Initialize an index set: $I = (\text{ })$ \\
		\textnormal{3}: \> $m = length(I)$ \\
		\textnormal{4}: \> If $m<j-1$: \\
		\textnormal{5}: \> \hspace{10pt} \= If $m\neq 0$ and $K_{m+1}=K_m$: \\
		\textnormal{6}: \> \> \hspace{10pt} \= $p_0=I_m$ \\
		\textnormal{7}: \> \> Else: \\
		\textnormal{8}: \> \> \> $p_0=0$ \\
		\textnormal{9}: \> \> End If \\
		\textnormal{10}: \> \> For $p=p_0,\dots,\min(K_{m+1},i-\sum_{l=1}^{m}I_l)$: \\
		\textnormal{11}: \> \> \> Create a copy $I'$ of the set $I$. \\
		\textnormal{12}: \> \> \> $I' = (I', p)$ \\
		\textnormal{13}: \> \> \> Go to step $3$, using $I'$ instead of $I$. \\
		\textnormal{14}: \> \> End For \\
		\textnormal{15}: \> Else: \\
		\textnormal{16}: \> \> $p = i-\sum_{l=1}^{m}I_l$ \\
		\textnormal{17}: \> \> If $p\leq K_j$: \\
		\textnormal{18}: \> \> \> $I = (I,p)$ \\
		\textnormal{19}: \> \> \> If $K_{j-1}\neq K_j$ or $I_{j-1}\leq I_j$: \\
		\textnormal{20}: \> \> \> \hspace{10pt} \= Update list: $\mathcal{I} = [\mathcal{I},I]$ \\
		\textnormal{21}: \> \> \> End If \\
		\textnormal{22}: \> \> End If \\
		\textnormal{23}: \> End If
	\end{tabbing}
\end{algorithm}

Given the choice for $k$ and $i$, algorithm \ref{algoritme y} describes a possible way to calculate the term $y_i^{(k)}$. Algorithms \ref{algoritme k indices} and \ref{algoritme i indices} are used to construct lists of index sets required for the summation of higher-order derivative applications. It is possible to provide a bound on the order of these derivatives to speed up the process.

\begin{algorithm} 
	\caption{Calculation of $y_i^{(k)}$ in algorithm \ref{algoritme n=2}}
	\label{algoritme y}
	\textnormal{INPUT}: $i,k\in\mathbb{N}$, upper bound $b\in\mathbb{N}$ on order of derivatives\\
	\textnormal{OUTPUT}: The term $y_i^{(k)}$
	\begin{tabbing}
		\textnormal{1}: \hspace{2pt}\= Set $y_i^{(k)} = \mathcal{H}^{(b)}
		\begin{pmatrix}
		q^{(1)}_{0} \\
		\kappa^{(1)}_{0}
		\end{pmatrix}
		\begin{pmatrix}
		v^{(k-1)}_{i} \\
		0
		\end{pmatrix}
		+
		\mathcal{H}^{(b)}
		\begin{pmatrix}
		q^{(1)}_{1} \\
		\kappa^{(1)}_{1}
		\end{pmatrix}
		\begin{pmatrix}
		v^{(k-1)}_{i-1} \\
		0
		\end{pmatrix}
		$ \\
		\textnormal{2}: \> For $k_1=2,\dots,k-2$: \\
		\textnormal{3}: \> \hspace{10pt} \= $k_2 = k-k_1$ \\
		\textnormal{4}: \> \> For $i_1=0,\dots,\min(k_1,i)$: \\
		\textnormal{5}: \> \> \hspace{10pt} \= $i_2 = i-i_1$ \\
		\textnormal{6}: \> \> \> If $i_2\leq k_2$ and $k_1\leq k_2$ and ($k_1\neq k_2$ or $i_1\leq i_2$): \\
		\textnormal{7}: \> \> \> \hspace{10pt} \= $\tilde{y} = \mathcal{H}^{(b)}
		\begin{pmatrix}
		q^{(k_1)}_{i_1} \\
		\kappa^{(k_1)}_{i_1}
		\end{pmatrix}\begin{pmatrix}
		q^{(k_2)}_{i_2} \\
		\kappa^{(k_2)}_{i_2}
		\end{pmatrix}
		$ \\
		\textnormal{8}: \> \> \> \> $y_i^{(k)} = y_i^{(k)} + c \tilde{y}$ with $c=\frac{1}{2}$ if both $k_1=k_2$ and $i_1=i_2$, $c=1$ otherwise \\
		\textnormal{9}: \> \> \> End If \\
		\textnormal{10}: \>\> End For \\
		\textnormal{11}: \> End For \\
		\textnormal{12}: \> For $j=3,\dots,\min(k,b)$: \\
		\textnormal{13}: \>\> Execute algorithm \ref{algoritme k indices} to construct the list $\mathcal{K}$ \\
		\textnormal{14}: \>\> For each set $K$ of indices in $\mathcal{K}$: \\
		\textnormal{15}: \>\>\> Execute algorithm \ref{algoritme i indices} to construct the list $\mathcal{I}$ \\
		\textnormal{16}: \>\>\> For each set $I$ of indices in $\mathcal{I}$: \\
		\textnormal{17}: \>\>\>\> Calculate the number $c$ of possible permutations of the set $((K_1,I_1),\dots,(K_j,I_j))$ \\
		\textnormal{18}: \>\>\>\> $y_i^{(k)} = y_i^{(k)} + \frac{c}{j!}D^j\mathcal{GL}^{(b)}
		\begin{pmatrix}
		q^{(K_1)}_{I_1} \\
		\kappa^{(K_1)}_{I_1}
		\end{pmatrix}\begin{pmatrix}
		q^{(K_2)}_{I_2} \\
		\kappa^{(K_2)}_{I_2}
		\end{pmatrix}\dots\begin{pmatrix}
		q^{(K_j)}_{I_j} \\
		\kappa^{(K_j)}_{I_j}
		\end{pmatrix}$ \\
		\textnormal{19}: \>\>\> End For \\
		\textnormal{20}: \>\> End For \\
		\textnormal{21}: \> End For
	\end{tabbing}
\end{algorithm}

\section*{Acknowledgments}
The authors would like to thank Jacques Tempere, Wout Van Alphen, Nico Schl\"{o}mer, Przemyslaw Klosiewicz and Delphine Draelants for their fruitful discussions.

\bibliographystyle{siamplain}
\bibliography{mybibfile}
\end{document}